\PassOptionsToPackage{unicode}{hyperref}
\PassOptionsToPackage{hyphens}{url}
\documentclass[reqno,11pt]{amsart}

\usepackage{amsmath,amssymb,amsthm,mathtools}
\usepackage{multirow}
\usepackage{caption}
\usepackage{subcaption}
\usepackage{longtable,booktabs,array,tabularx}
\usepackage{pdflscape}
\usepackage{calc}
\usepackage{graphicx}
\usepackage[a4paper,top=1.75cm,bottom=1.5cm,left=1.3cm,right=1.3cm]{geometry}
\usepackage{xcolor}
\usepackage{iftex}
\usepackage{enumitem}
\usepackage{algorithm}
\usepackage{algpseudocode}
\usepackage{tikz}
\usetikzlibrary{arrows.meta,calc,positioning,fit,backgrounds,decorations.pathreplacing}

\ifPDFTeX
  \PackageError{PEJWAK}{This source requires XeLaTeX}{Select XeLaTeX as the compiler.}
\fi
\usepackage{fontspec}
\usepackage{ebgaramond}
\usepackage{unicode-math}
\defaultfontfeatures{Ligatures=TeX,Scale=MatchLowercase}

\IfFileExists{microtype.sty}{\usepackage[]{microtype}\UseMicrotypeSet[protrusion]{basicmath}}{}
\IfFileExists{upquote.sty}{\usepackage{upquote}}{}
\IfFileExists{xurl.sty}{\usepackage{xurl}}{}
\usepackage[nocompress]{cite}
\usepackage{hyperref}
\IfFileExists{bookmark.sty}{\usepackage{bookmark}}{}
\hypersetup{
  hidelinks,
  pdfcreator={LaTeX},
  pdftitle={PEJWAK: A Profile-Echoing Self-Anchored Aggregation Architecture for Multi-Criteria Decision Analysis},
  pdfauthor={Seyyed Ahmad Edalatpanah}
}
\allowdisplaybreaks

\theoremstyle{definition}
\newtheorem{definition}{Definition}

\theoremstyle{plain}
\newtheorem{theorem}{Theorem}
\newtheorem{proposition}{Proposition}
\newtheorem{corollary}{Corollary}
\newtheorem{lemma}{Lemma}
\theoremstyle{remark}
\newtheorem{remark}{Remark}

\title{PEJWAK: A Profile-Echoing Self-Anchored Aggregation Architecture for Multi-Criteria Decision Analysis}
\author{Seyyed Ahmad Edalatpanah}
\subjclass[2020]{90B50, 91B06}
\keywords{Multi-criteria decision analysis; PEJWAK; Profile-echoing architecture; Self-anchored aggregation; Criterion-level anchoring; Logarithmic retention; Rank-transition analysis}

\begin{document}
\begin{abstract}
\textbf{Purpose:}
Aggregation determines both the final score and the level at which
compensation occurs. Additive rules exchange criterion values directly
in a completed sum, weighted-product rules propagate weakness through
the whole profile and collapse at an active zero, and post hoc hybrids
combine completed indices rather than contextualizing each criterion
before final aggregation. This paper introduces the Profile-Echoing
Jointly Weighted Anchoring Kernel (PEJWAK), a distinct profile-echoing
self-anchored architecture that fills this gap. Each criterion is
geometrically coupled with an endogenous anchor derived from the same
alternative, while one importance vector governs anchor formation,
criterion-specific retention, and final participation without a
separately elicited nonlinear control parameter.

\textbf{Methodology:}
The general formulation separates the anchor function, anchoring kernel,
retention exponents, contribution coefficients, and outer aggregation
rule. The canonical member uses a weighted-arithmetic self-anchor, sets
each retention exponent equal to the corresponding criterion-importance
value, and derives contribution coefficients through a normalized
square-root map. Analytical results establish continuity, strict
monotonicity on active coordinates, diagonal calibration, internality,
full-domain joint continuity, exact boundary behaviour, nonseparability,
and an exact logarithmic retention law. A logarithmic escort family
audits the fixed contribution rule through pairwise indifference
geometry, covariance sensitivity, finite rank phases, and its limiting
regime. A deterministic supplier-selection study compares structurally
distinct protocols, decomposes scores, traces weight transitions, audits
set dependence, and evaluates rank affinity through exact permutation
tails and Spearman's coefficient.

\textbf{Findings:}
Canonical PEJWAK is well defined on the complete score-weight domain and
requires neither score flooring nor epsilon regularization. It reproduces
uniform profiles, remains between the smallest and largest criterion
values, and preserves fully decomposable criterion contributions while
coupling every criterion to the complete alternative profile. The
equal-weight binary restriction violates bisymmetry and is therefore not
a classical quasi-arithmetic mean. The kernel retains exactly the
importance-specified fraction of a criterion's logarithmic deviation
from its anchor. An isolated zero removes only its own active term
whenever the remaining active scores sustain a positive anchor. The
numerical study identifies exact stability intervals, transition
thresholds, finitely many rank phases, and a kernel-determined limiting
order. Set-induced reversals are traced to current-set normalization and
disappear through this pathway under fixed exogenous bounds. Exhaustive
enumeration and automated reruns make the reported numerical results
exact, traceable, and reproducible.

\textbf{Originality/Value:}
PEJWAK establishes a generalizable design language for
profile-conditioned aggregation. It occupies an important methodological
space between direct weighted aggregation and parameter-intensive
nonadditive interaction models: it introduces endogenous
alternative-specific context without subset capacities, preserves
criterion identity, separates retention from final participation, and
supports complete decomposition and analytical auditing. Its value is
not merely a different ranking formula, but a criterion-level mechanism
through which each criterion preserves its identity while its expressed
contribution is conditioned by the profile of the alternative to which
it belongs.

\end{abstract}
\maketitle

\noindent\textbf{Affiliations:} Department of Applied Mathematics, Artificial Intelligence Research Center, Ayandegan University, Tonekabon, Iran; and Department of Mathematics, Saveetha School of Engineering, Saveetha Institute of Medical and Technical Sciences, Chennai 602105, India.\par
\noindent\textbf{E-mails:} \href{mailto:s.a.edalatpanah@aihe.ac.ir}{s.a.edalatpanah@aihe.ac.ir}; \href{mailto:saedalatpanah@gmail.com}{saedalatpanah@gmail.com}.\par
\noindent\textbf{Published version:} \emph{Journal of Decisions and Operations Research}, 11(3) (2026), 267--304. DOI: \href{https://doi.org/10.22105/dmor.2026.581617.2112}{10.22105/dmor.2026.581617.2112}. CC BY 4.0.\par
\noindent\textbf{ORCID:} \href{https://orcid.org/0000-0001-9349-5695}{0000-0001-9349-5695}.
\medskip

\section{Introduction}
\label{sec:introduction}

Score-based multi-criteria decision analysis (MCDA) begins with a
multidimensional performance profile and must ultimately convert that
profile into an overall score by which alternatives can be compared
\cite{Greco2025,Keeney1976}. The aggregation rule is therefore not a
neutral computational device. It determines which differences among
criteria may compensate for one another, how criterion importance shapes
that compensation, and whether the resulting score remains interpretable
on the normalized performance scale
\cite{Bouyssou1986,Beliakov2007}. The contrast is clearest at two familiar
poles. A linear additive rule permits direct exchange among weighted
criterion values, so strong performance in some dimensions may conceal a
severe shortfall in another. A weighted product propagates weakness
through the complete profile, and a zero on any positively important
criterion reduces the entire product to zero. The central design question
is therefore not merely where to place one global compromise between
addition and multiplication. It is whether each criterion can be
interpreted in relation to the performance profile of the alternative to
which it belongs, while preserving criterion identity, maintaining score
traceability, and preventing an isolated boundary value from erasing all
performance accumulated elsewhere.

Established aggregation families provide powerful answers to neighbouring
but distinct modelling questions. Additive and multiplicative
architectures determine direct or global forms of compensation
\cite{Beliakov2007,Grabisch2009}; ordered architectures attach influence
to rank positions \cite{Yager1988}; interaction-aware architectures
represent coalition effects \cite{Grabisch1996}; multiple-aggregation
procedures reconcile several evaluative channels
\cite{Zavadskas2012,Liao2020,Wen2020}; external-reference procedures
compare alternatives with shared benchmarks
\cite{HwangYoon1981,Stevic2020}; and penalty-based constructions derive
or correct whole-profile summaries \cite{Calvo2010,Mariani2024}. These
families locate context, reference, compensation, and weighting at
different stages of evaluation. The comparison developed in
Section~\ref{sec:aggregation-positioning} shows that they do not provide,
in an equivalent form, the particular combination targeted here: fixed
criterion identity, an endogenous reference extracted from the same
alternative, criterion-level coupling with that reference before final
aggregation, and distinct architectural roles for one common source of
criterion-importance information. This combination defines the
methodological space addressed by PEJWAK.

This study introduces the Profile-Echoing Jointly Weighted Anchoring
Kernel (PEJWAK), a profile-echoing self-anchored aggregation architecture.
For each alternative, an anchor function first extracts an internal
reference from its normalized performance profile. A bivariate anchoring
kernel then couples each criterion value with that reference, and an
outer aggregation rule combines the completed criterion-level terms.
The general formulation separates these layers so that the anchor,
retention mechanism, contribution rule, and final operator remain
mathematically distinguishable. In the canonical realization, a single
criterion-importance vector forms the weighted-arithmetic self-anchor,
sets the criterion-specific retention exponents, and generates the final
contribution coefficients through a normalized square-root map. PEJWAK
therefore introduces alternative-specific context without requiring
subset capacities, independently elicited interaction parameters, or a
separately tuned global nonlinear control parameter.

PEJWAK, an acronym for Profile-Echoing Jointly Weighted Anchoring Kernel,
takes its name from the Persian word \textit{pejwak}, meaning an echo or
reflected sound. The form \textit{pezhvāk} gives its formal transliteration. The name expresses the mechanism itself: each
criterion contributes to the formation of the alternative's profile and
is then interpreted against the reference extracted from that profile
without losing its identity. \textit{Profile-Echoing} denotes the return
of the profile to the criterion as context. \textit{Jointly Weighted}
denotes the coordinated but mathematically distinct roles of criterion
importance in anchor formation, retention, and contribution.
\textit{Anchoring Kernel} identifies the bivariate geometric map through
which each criterion value is joined with the profile-derived reference.

The paper makes four connected contributions. First, it establishes a
general profile-anchored aggregation family by formally separating the
anchor function, anchoring kernel, retention exponents, contribution
coefficients, and outer aggregation rule. The canonical PEJWAK operator
is then obtained as a specific importance-linked member of this family,
rather than as an isolated scoring formula. This construction provides a
generalizable design language through which alternative anchors,
retention mappings, and contribution rules can be developed while the
criterion-level architecture remains explicit. Second, the paper
establishes the mathematical foundations required for reliable use and
structural interpretation. At the family level, it proves continuity,
coordinatewise monotonicity, diagonal calibration, and internality. For
the canonical member, it further proves strict monotonicity on active
coordinates, joint continuity on the complete score-weight domain,
zero-weight neutrality, an exact characterization of the zero set,
endogenous profile coupling, nonseparability, failure of classical
bisymmetry in the equal-weight binary restriction, exact logarithmic
retention, and state-dependent marginal substitution.

Third, the paper develops the logarithmic escort geometry of the
contribution rule. Pairwise indifference hyperplanes, a covariance
sensitivity identity, finitely many rank phases, exact transition points,
and the limiting order are derived analytically. The square-root rule
remains the fixed operational specification, while the escort exponent
serves as an ex post audit coordinate that reveals the ranking
consequences of alternative concentration regimes without creating a
new preference parameter. Fourth, the numerical study converts the
architecture into a fully auditable computational object. It compares
prespecified and structurally distinct aggregation protocols, decomposes
every PEJWAK score into complete criterion contributions, traces
continuous criterion-importance paths and exact ranking thresholds,
identifies the normalization transport responsible for set-induced rank
reversal, and distinguishes directional top-weighted rank affinity from
symmetric full-ranking association. Exact permutation enumeration,
certified transition calculations, and a deterministic computational
workflow make the reported results traceable and independently
reproducible.

Across these connected contributions, PEJWAK establishes a
criterion-level aggregation relation for problems in which a performance
value should be interpreted within the profile of the same alternative.
It preserves criterion identity, introduces endogenous profile context,
localizes geometric sensitivity, and separates retention from final
participation while keeping the resulting score fully traceable. This
combination places PEJWAK between direct weighted aggregation and
parameter-intensive nonadditive interaction models without reproducing
either architecture.

The remainder of the paper follows the logic of this architecture.
Section~\ref{sec:aggregation-positioning} locates compensation, reference,
context, and criterion influence across representative aggregation
families and identifies the precise methodological position of PEJWAK.
Section~\ref{sec:design-self-anchored} formalizes the general
profile-anchored family and its canonical realization and establishes
their principal mathematical and computational properties.
Section~\ref{sec:numerical-study} conducts the comparative,
criterion-importance sensitivity, contribution-rule, rank-reversal,
rank-affinity, and reproducibility audits.
Section~\ref{sec:discussion} interprets the architectural advantages and
methodological implications of the results.
Section~\ref{sec:limitations} clarifies the evidential scope and
cross-domain transfer conditions of the architecture, and
Section~\ref{sec:conclusion} consolidates the contribution and concludes
the paper.

\section{Compensation, Reference, and the Geometry of Aggregation}
\label{sec:aggregation-positioning}

Aggregation operators determine more than how a multidimensional
performance profile is reduced to one score. They determine where
compensation occurs, how context enters the evaluation, what the weights
or control parameters act upon, and whether criterion identity survives
the construction \cite{Beliakov2007}. Additive, multiplicative, ordered,
interaction-aware, multiple-aggregation, external-reference, and
penalty-based architectures answer these questions at different levels
\cite{Bouyssou1986,Grabisch2009,Yager1988,Grabisch1996,
HwangYoon1981,Zavadskas2012,Liao2020,Wen2020,Calvo2010}. The relevant
comparison is therefore not a contest among formulas that pursue the
same objective. It is an architectural comparison of where information
is introduced and what evaluative relation each construction is able to
represent. Method selection must follow the preference semantics and
decision purpose of the problem rather than formula novelty alone
\cite{Cinelli2020,Cinelli2022,Bouyssou2000,French2002}.

Compensation itself is not a binary property. It specifies whether gains
on some criteria may offset losses on others, at what stage this exchange
occurs, and what restrictions govern it. Compensatory, noncompensatory,
and intermediate preference structures accordingly represent different
decision logics \cite{Bouyssou1986,Fishburn1976}. Outranking procedures
may prevent a favourable concordance pattern from establishing a
preference when sufficiently strong discordance or veto is present
\cite{Roy1991}; strong-sustainability models may likewise reject full
substitutability across critical dimensions \cite{Munda2008}. Criterion
weights consequently have no context-free interpretation. Depending on
the architecture, they may scale trade-offs, express importance or
priority, govern positional influence, or parameterize coalition effects
\cite{Vansnick1986}. PEJWAK remains compensatory, but it relocates a
substantive part of compensation from the completed score to the
formation of each criterion-level term. Its contribution is therefore
not the elimination of trade-offs, but their profile-conditioned
organization before final aggregation.

This distinction is clearest in direct profile aggregation. Simple
Additive Weighting (SAW) computes
$\sum_j w_jr_j$, so criterion values remain separate until they exchange
directly in the completed sum. The Weighted Product (WP) method computes
$\prod_j r_j^{w_j}$, coupling all active criteria in one global product
and transmitting an active zero to the complete score. Weighted power
means connect arithmetic and geometric behaviour through one common
order parameter, while quasi-arithmetic means apply one common
strictly monotone generator to all criterion values
\cite{Beliakov2007,Grabisch2009}. These families provide coherent global
aggregation geometries, but the same transformation regime governs the
whole profile. PEJWAK introduces a different level of construction: each
criterion is first coupled with an endogenous reference extracted from
the same alternative, and only then participates in the final sum.
Criterion-specific retention therefore replaces neither addition nor
multiplication; it reorganizes their geometric relation at the level of
the individual criterion term. The bisymmetry result established in
Section~\ref{sec:design-self-anchored} confirms that even the equal-weight
binary restriction of canonical PEJWAK is not a classical
quasi-arithmetic mean \cite{Aczel1948}.

Ordered and data-dependent operators place context elsewhere. In Ordered
Weighted Averaging (OWA), the coefficients belong to ordered positions
rather than fixed criterion identities \cite{Yager1988}. Weighted OWA
(WOWA) combines source importance with positional preference
\cite{Torra1997}; dependent OWA operators allow the observed
configuration to affect the effective weights \cite{Xu2006}; and the
Power Average increases the influence of values supported by the
remaining arguments \cite{Yager2001}. These constructions alter which
values receive influence after ordering or support assessment. Canonical
PEJWAK instead preserves fixed criterion identity throughout. The
criterion-importance information determines retention and contribution,
whereas the observed profile constructs the common endogenous reference
that changes the criterion term itself. The contextual response therefore
occurs through anchoring rather than through positional reassignment of
weight, preserving a direct interpretive path from each criterion to its
complete contribution.

Interaction-aware aggregation introduces still richer forms of
dependence. The Choquet integral represents explicit coalition effects
through a capacity on subsets of the criterion set
\cite{Grabisch1996}. Its marginal contributions can encode synergy,
redundancy, complementarity, substitutability, and, under suitable
structures, degrees of veto or favour behaviour
\cite{Marichal2004}. This expressive power requires an explicit
coalitional model; a general normalized capacity contains
$2^n-2$ nontrivial subset values before structural restrictions or
identification procedures are imposed. Bonferroni means couple each
argument with a summary of the remaining arguments
\cite{Bonferroni1950,Yager2009,Beliakov2010}, while Heronian means
introduce context through pairwise product terms \cite{Guan2006}.
PEJWAK generates a different form of dependence. One common reference is
extracted from the complete profile, and every criterion is coupled with
that reference. A change in one coordinate consequently moves the anchor
and propagates through the other criterion-level terms, producing
endogenous profile-mediated cross-effects without requiring a subset
capacity or a separately parameterized interaction matrix. When explicit
coalition semantics are the modelling objective, Choquet-type
constructions provide the appropriate richer language. When the required
context is the alternative's own profile rather than elicited synergy or
veto, PEJWAK supplies a substantially lighter, decomposable, and more
directly auditable architecture.

Multiple-aggregation and global blending procedures integrate information
at a later stage. Weighted Aggregated Sum Product Assessment (WASPAS)
combines completed additive and multiplicative scores
\cite{Zavadskas2012}. Double Normalization-based Multiple Aggregation
(DNMA) reconciles subordinate utilities and rankings generated through
several normalization and aggregation channels \cite{Liao2020}.
Mixed Aggregation by Comprehensive Normalization Technique (MACONT)
integrates multiple normalization representations and evaluates
deviations from a virtual reference alternative \cite{Wen2020}.
MULTIMOORA combines ratio, reference-point, and full multiplicative
systems \cite{Brauers2010}, while CoCoSo reconciles several compromise
appraisal sequences \cite{Yazdani2019}. The compensatory
$\gamma$-operator similarly blends two completed whole-profile
connectives through one global control parameter
\cite{Zimmermann1980}. These methods preserve several evaluative views
and combine them after complete or subordinate assessments have been
formed. PEJWAK integrates at a more elementary level. Arithmetic profile
formation and geometric retention are joined inside every criterion term
before the outer score exists. This pre-aggregation integration is one of
its clearest architectural advantages: the method does not append a
mixing rule to completed indices, but changes how criterion-level
evidence is constructed in the first place.

Reference-based architectures differ primarily in where the benchmark is
formed. TOPSIS evaluates alternatives against common ideal and anti-ideal
profiles \cite{HwangYoon1981}; MARCOS incorporates explicit ideal and
anti-ideal alternatives into a compromise-ranking structure
\cite{Stevic2020}; and MACONT constructs a virtual reference from
candidate-set information \cite{Wen2020}.

\clearpage
\begin{landscape}
\begin{center}
\captionof{table}{
Architectural Comparison of Representative Aggregation Families and the
Methodological Position of PEJWAK
}
\label{tab:conceptual-positioning}
\footnotesize
\renewcommand{\arraystretch}{1.18}
\setlength{\tabcolsep}{4pt}
\begin{tabularx}{\linewidth}{@{}p{3.3cm}p{4.4cm}p{4.8cm}p{4.0cm}X@{}}
\toprule
Aggregation architecture
&
Location of context or reference
&
Meaning of weights or control information
&
Treatment of criterion identity
&
Level of compensation or coupling
\\
\midrule

Additive aggregation, including SAW
&
No separate reference; criteria enter a common additive balance
&
Criterion contribution and, under the assumed value model, tradeoff scaling
&
Preserved through fixed criterion identities
&
Direct exchange among weighted criterion contributions
\\

Multiplicative aggregation, including WP
&
No separate reference; criteria are linked through the complete product
&
Criterion exponents determine multiplicative influence
&
Preserved through fixed criterion identities
&
Across the product; a zero on a positively weighted criterion may collapse the score
\\

Power and quasi-arithmetic means
&
A common order parameter or generator defines the profile geometry
&
Criterion coefficients operate within one shared transformation regime
&
Preserved, but all criteria undergo the same transformation logic
&
Within a common original or transformed coordinate system
\\

OWA, WOWA, and data-dependent ordered operators
&
Context enters through ordered position, observed configuration, or support
&
Positional weights, source importance combined with position, or data-dependent effective weights
&
OWA detaches weights from fixed identities; WOWA reintroduces source importance within an ordered scheme
&
Through ordered or data-dependent weighting of an arithmetic aggregate
\\

Choquet integral
&
Context is encoded by a capacity over criterion coalitions
&
Importance and interaction are represented through subset values
&
Preserved through coalition-dependent marginal contributions
&
At the level of coalitions, synergies, redundancies, and marginal contributions
\\

Bonferroni and Heronian means
&
Context is created through remaining inputs or pairwise products
&
Global order parameters, with criterion weights in weighted variants
&
Preserved within relational terms
&
Through leave-one-out summaries or cross-criterion products
\\

Multiple-aggregation procedures
&
Context is distributed across normalization, aggregation, reference, or comparability channels
&
Criterion weights within channels and method-specific reconciliation rules
&
Method-specific; generally retained within subordinate channels
&
Between complete or partially complete evaluative views
\\

External-reference methods, including TOPSIS, MARCOS, and MACONT
&
Ideal, anti-ideal, or virtual references are constructed across alternatives
&
Weights scale distances, utilities, or positions relative to shared benchmarks
&
Preserved relative to external criterion references
&
Between each alternative and a shared external reference structure
\\

Penalty-based aggregation
&
The aggregate minimizes disagreement between the inputs and a candidate output
&
Penalty or dissimilarity functions determine the notion of centrality
&
Criterion-specific under distinct penalties $p_j$; potentially symmetric otherwise
&
Through minimization of total disagreement around a candidate output
\\

Penalized power means and related indicators
&
Context is supplied by within-unit heterogeneity in a scaled transformed space
&
The order and any weights form the base mean; a data-driven factor corrects the completed score
&
Symmetric in the unweighted form; criterion importance may be retained in weighted variants
&
After formation of the base aggregate, through a row-level correction
\\

PEJWAK
&
An endogenous within-alternative anchor is derived from the same
performance profile and returned to every criterion as context
&
In the canonical realization, one importance vector enters through
distinct anchor-formation, retention, and contribution mappings; no
subset capacity or separately elicited global nonlinear control parameter
&
Fixed criterion identity is preserved throughout, and every complete
criterion contribution remains traceable
&
Criterion-level geometric coupling before final aggregation; endogenous
profile context with a fully decomposable score
\\

\bottomrule
\end{tabularx}
\end{center}
\end{landscape}
\clearpage
These methods ask how an
alternative stands relative to a shared external horizon. Penalty-based
aggregation instead defines the output as a centre minimizing prescribed
disagreement with the inputs \cite{Calvo2010}. Penalized power means first
form a whole-profile summary and then correct it through a
heterogeneity-based factor \cite{Mariani2024}. PEJWAK makes a different
use of reference. Its benchmark is derived from the same alternative and
does not remain external to criterion-term formation. The profile produces
an internal anchor, and that anchor returns to every criterion as context.
PEJWAK therefore neither compares an alternative with a common frontier,
nor selects the final score as an optimization centre, nor corrects a
completed mean after aggregation. It changes the manner in which the
components participate in producing the aggregate.

The defining chain is
\[
r_i
\longmapsto
A(r_i),
\qquad
\bigl(r_{ij},A(r_i)\bigr)
\longmapsto
K_{\alpha_j}\bigl(r_{ij},A(r_i)\bigr),
\qquad
\left\{
K_{\alpha_j}\bigl(r_{ij},A(r_i)\bigr)
\right\}_{j=1}^{n}
\longmapsto
P_{A,\varphi,\alpha}(r_i).
\]
The profile enters twice, but in nonredundant roles. It first produces the
shared reference and then remains represented through the criterion
values that are coupled with that reference. Every criterion thus helps
form the context that subsequently conditions its own interpretation.
This return of the profile to the criterion is the precise mathematical
content of self-anchoring and of the echo mechanism.

The preceding comparison identifies a methodological space that is not
represented equivalently by the established families. Direct weighted
aggregation preserves criterion identity but supplies no
alternative-specific contextual reference before summation. Global
multiplicative aggregation introduces geometric sensitivity but couples
the entire profile and inherits global zero propagation. Ordered
operators introduce context by changing positional influence.
Nonadditive integrals model explicit interactions but require coalition
semantics and substantially richer parameterization. Multiple-aggregation
and reference-based procedures preserve several completed views or invoke
benchmarks beyond the alternative itself. PEJWAK combines four features
within one construction: a within-alternative reference, fixed criterion
identity, criterion-level geometric coupling before final aggregation,
and a formal separation between retention and contribution. No compared
architecture provides this conjunction in an equivalent form.

Its profile dependence belongs to the wider landscape of mixture-type
and input-dependent aggregation \cite{Beliakov2007,Mesiar2006}, but its
specific mechanism is the self-anchored return of one common
profile-derived reference to every fixed criterion identity. Explicit
vetoes, coalitional synergy, positional attitudes, and external-frontier
comparison remain distinct modelling objectives.

Three coordinates summarize this positioning. The first is the source of
reference: absent, externally constructed, output-centred, or derived
from the same alternative. The second is the location of compensation or
coupling: direct exchange, whole-profile transformation, positional or
coalitional interaction, reconciliation after separate evaluations, or
criterion-level anchoring before final aggregation. The third is the
object of weighting: fixed criteria, ordered positions, coalitions,
complete evaluative channels, or distinct components of one
importance-linked architecture. Table~\ref{tab:conceptual-positioning}
condenses these coordinates and makes visible the distinctive conjunction
provided by PEJWAK: preserved criterion identity, an endogenous
within-alternative reference, pre-aggregation geometric coupling, and
separate retention and contribution roles.

\section{PEJWAK Architecture: Design Principles and the Self-Anchored Construction}
\label{sec:design-self-anchored}

PEJWAK is built from four design commitments that jointly define its
mathematical identity. First, each criterion remains attached to its
fixed identity through its observed value, retention behaviour, and
complete contribution to the final score. Second, when one
criterion-importance vector governs several architectural functions,
anchor formation, retention, and outer participation are represented by
distinct mappings rather than compressed into an undifferentiated notion
of weight. Third, the reference against which a criterion is interpreted
is extracted from the normalized performance profile of the same
alternative. Fourth, the completed operator is diagonally calibrated and
internal: it reproduces every uniform profile and remains between the
smallest and largest criterion values of the evaluated alternative.
These commitments place contextualization inside every criterion term
and before final aggregation.

Together, the four commitments establish more than one closed-form
scoring rule. They define a reusable profile-echoing architecture with
three linked mathematical layers. The anchor function extracts the
alternative-specific reference; the anchoring kernel determines how much of each
criterion-specific deviation from that reference is retained; and the
outer aggregation rule converts the anchored terms into fully traceable
criterion contributions and a final score. The layers are coordinated
but noninterchangeable: the anchor is not the kernel, the kernel output
is not yet the complete criterion contribution, and the PEJWAK operator
emerges only after anchoring, retention, and participation have been
joined. Subsection~\ref{subsec:formal-architecture} formalizes this
general design space and identifies the canonical importance-linked
member used throughout the paper.

\subsection{Formal Architecture and Canonical Construction}
\label{subsec:formal-architecture}

Table~\ref{tab:pejwak-notation} fixes the notation for the general profile-anchored architecture, the importance-linked mappings, and the canonical PEJWAK realization.

\begin{table}[htbp]
\caption{Notation for the general profile-anchored architecture and the canonical PEJWAK realization.}
\label{tab:pejwak-notation}
\centering
\small
\renewcommand{\arraystretch}{1.14}
\begin{tabularx}{\textwidth}{@{}p{3.3cm}X@{}}
\toprule
Symbol & Meaning \\
\midrule
$\mathcal A_i$, $C_j$
&
Alternative $i$, $i=1,\ldots,m$, and criterion $j$, $j=1,\ldots,n$
\\
$x_{ij}$, $r_{ij}$
&
Raw and normalized performance of alternative $\mathcal A_i$ on criterion $C_j$
\\
$r_i=(r_{i1},\ldots,r_{in})$
&
Normalized performance profile of alternative $\mathcal A_i$
\\
$\mathcal W_n$
&
Simplex of normalized nonnegative criterion-importance vectors
\\
$w=(w_1,\ldots,w_n)\in\mathcal W_n$
&
Criterion-importance vector
\\
$A_w(r_i)$
&
Admissible anchor derived from $r_i$ under importance profile $w$
\\
$\alpha(w)=(\alpha_1(w),\ldots,\alpha_n(w))$
&
Vector of criterion-specific retention exponents
\\
$\varphi(w)=(\varphi_1(w),\ldots,\varphi_n(w))$
&
Vector of nonnegative contribution coefficients, with $\varphi(w)\in\mathcal W_n$
\\
$K_{\alpha}(x,a)$
&
Boundary-extended anchoring kernel that geometrically couples score $x$ with anchor $a$
\\
$P_{A,\varphi,\alpha}(r_i)$
&
General profile-anchored operator for a fixed admissible design triple
\\
$S_i=S_w(r_i)$
&
Canonical weighted-arithmetic self-anchor
\\
$G_{ij}=K_{w_j}(r_{ij},S_i)$
&
Canonical anchored term for criterion $C_j$
\\
$T_{ij}=\varphi_j(w)G_{ij}$
&
Complete contribution of criterion $C_j$
\\
$P_w^{\mathrm{PEJWAK}}(r_i)$
&
Canonical PEJWAK score of alternative $\mathcal A_i$
\\
\bottomrule
\end{tabularx}
\end{table}
Let
\begin{equation*}
\mathcal W_n
=
\left\{
v\in[0,1]^n:
\sum_{j=1}^{n}v_j=1
\right\}.
\end{equation*}

The elementary operation underlying the architecture is the criterion-level anchoring kernel.\footnote{Throughout, \emph{kernel} denotes a compositional two-argument aggregation map that generates each criterion-level value by geometrically coupling a score with its anchor. It is not a Mercer (positive semidefinite, reproducing) kernel, nor a linear-algebraic kernel (the null space of a map), and no inner-product, spectral, or positive-definiteness structure is claimed or used.} For a fixed retention exponent $\alpha\in[0,1]$ and $x,a\in[0,1]$, define:

\begin{equation}
K_{\alpha}(x,a)=
\begin{cases}
a, & \alpha=0,\\[1mm]
x^{\alpha}a^{1-\alpha}, & 0<\alpha<1,\\[1mm]
x, & \alpha=1.
\end{cases}
\label{eq:anchoring-kernel}
\end{equation}
For every fixed $\alpha$, this is the continuous boundary extension of the corresponding weighted geometric mean on the unit square. It is nondecreasing in each argument and satisfies
\begin{equation*}
\min\{x,a\}
\leq
K_{\alpha}(x,a)
\leq
\max\{x,a\}.
\end{equation*}
The endpoint identities have direct structural meanings:
\[
K_0(x,a)=a
\qquad\text{and}\qquad
K_1(x,a)=x.
\]
The first represents complete return to the anchor; the second represents complete retention of the criterion score.

For each fixed criterion-importance vector $w\in\mathcal W_n$, an anchor
\[
A_w:[0,1]^n\longrightarrow[0,1]
\]
is called admissible if it is continuous, coordinatewise nondecreasing, and diagonally calibrated:
\begin{equation*}
A_w(c,\ldots,c)=c
\qquad
\text{for every }c\in[0,1].
\end{equation*}
When the anchor is independent of $w$, or when $w$ is fixed and no ambiguity can arise, the subscript is suppressed. Every admissible anchor is internal. Indeed, if
\[
\underline r=\min_jr_j,
\qquad
\overline r=\max_jr_j,
\]
then
\[
A_w(\underline r,\ldots,\underline r)
\leq
A_w(r)
\leq
A_w(\overline r,\ldots,\overline r),
\]
and diagonal calibration gives
\begin{equation*}
\underline r
\leq
A_w(r)
\leq
\overline r.
\end{equation*}

Given an admissible anchor $A$, a contribution vector
\[
\varphi\in\mathcal W_n,
\]
and a retention vector
\[
\alpha=(\alpha_1,\ldots,\alpha_n)\in[0,1]^n,
\]
define the general profile-anchored operator by
\begin{equation}
P_{A,\varphi,\alpha}(r_i)
=
\sum_{j=1}^{n}
\varphi_j
K_{\alpha_j}
\left(
r_{ij},A(r_i)
\right).
\label{eq:general-pejwak-family}
\end{equation}
On the strictly positive interior, this may be written as
\begin{equation*}
P_{A,\varphi,\alpha}(r_i)
=
\sum_{j=1}^{n}
\varphi_j
r_{ij}^{\alpha_j}
A(r_i)^{1-\alpha_j}.
\end{equation*}
Equation~\eqref{eq:general-pejwak-family}, rather than the expanded power expression, remains the governing definition on the boundary.

The adjective jointly weighted refers to the linkage of the architectural roles through a common criterion-importance vector, not to arbitrary independent choices of $A$, $\alpha$, and $\varphi$. A linked member therefore assigns to each $w\in\mathcal W_n$ an admissible anchor $A_w$, a retention vector $\alpha(w)\in[0,1]^n$, and a contribution vector $\varphi(w)\in\mathcal W_n$, giving
\begin{equation*}
P_{A_w,\varphi(w),\alpha(w)}(r_i)
=
\sum_{j=1}^{n}
\varphi_j(w)
K_{\alpha_j(w)}
\left(
r_{ij},A_w(r_i)
\right).
\end{equation*}
The three objects have a common informational source but remain distinct mathematical mappings.

Equation~\eqref{eq:general-pejwak-family} establishes the general design
space of profile-anchored aggregation. It permits the anchor, retention
mapping, and contribution rule to vary while preserving the defining
criterion-level architecture. For every fixed admissible design triple,
continuity, coordinatewise monotonicity, diagonal calibration, and
internality follow at the profile level. Dependence on a changing
criterion-importance vector is a separate regularity question because it
changes the architectural mappings themselves. The canonical
realization now provides the first fully specified importance-linked
member of this design space: one importance vector generates the
weighted-arithmetic self-anchor, the criterion-specific retention
exponents, and the normalized contribution coefficients through three
explicit and coordinated mappings.

The canonical realization assigns the following three mappings to the common criterion-importance vector:
\[
A_w(r_i)=S_w(r_i),
\qquad
\alpha_j(w)=w_j,
\qquad
\varphi_j(w)=
\frac{\sqrt{w_j}}{\sum_{k=1}^{n}\sqrt{w_k}}.
\]

\begin{definition}[Canonical PEJWAK operator]
Let
\[
r_i=(r_{i1},\ldots,r_{in})\in[0,1]^n
\qquad\text{and}\qquad
w\in\mathcal W_n.
\]
The self-anchor, contribution coefficients, anchored terms, and complete
criterion contributions are defined jointly by
\[
\begin{aligned}
S_i=S_w(r_i)&=\sum_{k=1}^{n}w_kr_{ik},\\
\varphi_j(w)&=\frac{\sqrt{w_j}}{\sum_{k=1}^{n}\sqrt{w_k}},
\qquad j=1,\ldots,n,\\
G_{ij}&=K_{w_j}(r_{ij},S_i),\\
T_{ij}&=\varphi_j(w)G_{ij}.
\end{aligned}
\]
The canonical PEJWAK score is
\begin{equation}
P_w^{\mathrm{PEJWAK}}(r_i)
=
\sum_{j=1}^{n}T_{ij}
=
\sum_{j=1}^{n}
\varphi_j(w)
K_{w_j}(r_{ij},S_i).
\label{eq:canonical-pejwak}
\end{equation}
If $S_i>0$ and
\[
J_+=\{j:w_j>0\},
\]
the score admits the expanded representation
\begin{equation}
P_w^{\mathrm{PEJWAK}}(r_i)
=
\sum_{j\in J_+}
\varphi_j(w)
r_{ij}^{w_j}
S_i^{1-w_j}.
\label{eq:canonical-expanded}
\end{equation}
If $S_i=0$, all positively weighted criterion scores are zero and Equation~\eqref{eq:canonical-pejwak} gives
\[
P_w^{\mathrm{PEJWAK}}(r_i)=0.
\]
\end{definition}

The canonical member converts one criterion-importance vector into three
coordinated but noninterchangeable actions. Through \(S_w\), criterion
importance determines how strongly each criterion shapes the internal
reference of the alternative. Through
\(\alpha_j(w)=w_j\), it determines the exact fraction of criterion-specific
logarithmic deviation retained relative to that reference. Through the
square-root contribution map, it determines the direct participation of
the completed anchored term in the final score. This is a deliberate
multi-role encoding of criterion importance, not the repeated use of one
coefficient with one meaning. It allows reference formation, criterion-specific retention, and final
participation to be controlled by one coherent informational source while
remaining mathematically identifiable.

The same construction gives \textit{Profile-Echoing} its exact
operational content. The criterion values first form the performance
profile and its endogenous self-anchor. That anchor then returns to every
criterion through
\[
G_{ij}=K_{w_j}(r_{ij},S_i).
\]
Accordingly, each active criterion helps construct the reference that
subsequently conditions its own interpretation and the interpretation of
the remaining criteria:
\[
(r_{i1},\ldots,r_{in})
\longrightarrow
S_i
\longrightarrow
\bigl(
K_{w_1}(r_{i1},S_i),\ldots,
K_{w_n}(r_{in},S_i)
\bigr).
\]
The echo is therefore an endogenous information mechanism, not merely a
name attached to the completed operator. Each criterion preserves a
controlled local voice while receiving the reflected context of the
alternative to which it belongs.

This profile dependence implies that equal local scores and equal criterion importance need not produce equal complete contributions across alternatives. For a minimal numerical illustration, let
\[
w=(0.50,0.30,0.20),
\qquad
r_a=(1.00,0.60,0.60),
\qquad
r_b=(0.20,0.60,0.60).
\]
The corresponding self-anchors are $S_a=0.80$ and $S_b=0.40$. Although
\[
r_{a2}=r_{b2}=0.60
\qquad\text{and}\qquad
w_2=0.30,
\]
the anchored terms differ:
\[
G_{a2}
=
0.60^{0.30}0.80^{0.70}
\approx
0.733852,
\qquad
G_{b2}
=
0.60^{0.30}0.40^{0.70}
\approx
0.451739.
\]
Since $\varphi_2(w)\approx0.321803$, their complete contributions are
\[
T_{a2}\approx0.236156,
\qquad
T_{b2}\approx0.145371.
\]
Hence, equality of a criterion score does not imply equality of its contribution: the contribution is conditioned by the performance profile in which that score occurs. This is the criterion-level consequence of the endogenous self-anchor.

The boundary case remains consistent with this interpretation. If $w_j=0$, then
\[
\varphi_j(w)=0,
\qquad
G_{ij}=K_0(r_{ij},S_i)=S_i,
\qquad
T_{ij}=0.
\]
Thus, an inactive criterion makes no complete contribution to the final score, even though its kernel output remains well defined.

The square-root contribution map preserves the support and pairwise ordering of the original importance vector:
\begin{equation*}
\varphi_j(w)=0
\quad\Longleftrightarrow\quad
w_j=0,
\end{equation*}
and, for $w_j,w_k>0$,
\begin{equation}
\frac{\varphi_j(w)}{\varphi_k(w)}
=
\left(
\frac{w_j}{w_k}
\right)^{1/2}.
\label{eq:ratio-compression}
\end{equation}
Consequently,
\[
w_j>w_k
\quad\Longleftrightarrow\quad
\varphi_j(w)>\varphi_k(w),
\]
while every non-unit pairwise ratio is moved toward one.

Let
\[
u=\left(\frac1n,\ldots,\frac1n\right).
\]
Then
\begin{equation*}
\varphi_j(w)
=
\frac{\sqrt{w_ju_j}}
{\sum_{k=1}^{n}\sqrt{w_ku_k}},
\end{equation*}
so $\varphi(w)$ is the normalized componentwise geometric compromise between the original criterion-importance profile and an equal-contribution baseline. The term compromise refers to this exact identity; it does not introduce a second elicited importance vector.

The exact ratio law in
Equation~\eqref{eq:ratio-compression} supplies the design rationale for
the square-root contribution map. Criterion importance has already shaped
the common self-anchor and the criterion-specific retention exponents.
Repeating the original positive ratios unchanged in the outer layer would
carry their full concentration into a third architectural role. The
square-root map instead preserves support and strict importance ordering
while moderating outer concentration by the exact relation
\[
\frac{\varphi_j(w)}{\varphi_k(w)}
=
\sqrt{\frac{w_j}{w_k}}.
\]
For example, when
\[
w=(0.50,0.30,0.20),
\]
the original ratio \(w_1/w_3=2.5\) becomes
\[
\frac{\varphi_1(w)}{\varphi_3(w)}
=
\sqrt{2.5}
\approx1.581.
\]
Criterion \(C_1\) retains its declared priority over \(C_3\), while the
outer aggregation preserves meaningful participation by the remaining
active criteria. The canonical square-root rule is therefore a precise
architectural balance: importance order is retained, but its
concentration is moderated at the final participation layer. The escort
family below makes the neighbouring equal-participation, direct-transfer,
and concentration regimes analytically visible while leaving the
canonical PEJWAK specification fixed at \(q=1/2\).

\begin{remark}[Contribution-rule escort family]
Let
\[
J_+=\{j:w_j>0\}.
\]
The broader power-transformed contribution map is
\begin{equation*}
\varphi_j(q)
=
\begin{cases}
\displaystyle
\frac{w_j^q}
{\sum_{k\in J_+}w_k^q},
& j\in J_+,\\[3mm]
0,
& j\notin J_+,
\end{cases}
\qquad
q\in[0,\infty).
\end{equation*}
At $q=0$, active criteria contribute equally; $q=1/2$ gives the canonical square-root rule; and $q=1$ recovers the original importance vector. Values $0<q<1$ compress positive weight ratios, whereas $q>1$ amplifies them. As $q\to\infty$, contribution concentrates equally on the criteria attaining the largest positive weight. The exponent $q$ is neither elicited from the decision maker nor fitted as an operational parameter. It serves only as an ex post analytical coordinate for auditing the fixed canonical contribution rule. Its logarithmic geometry is developed in Subsection~\ref{subsec:escort-geometry}.
\end{remark}

\subsection{Mathematical Properties and Structural Geometry}
\label{subsec:mathematical-properties}

This subsection establishes the mathematical discipline of the PEJWAK
architecture. The first results show that every fixed admissible
profile-anchored design is continuous, monotone, diagonally calibrated,
and internal. The subsequent results identify what the canonical
importance-linked member adds: strict improvement on active coordinates,
exact logarithmic retention, endogenous profile coupling,
nonseparability, departure from the classical quasi-arithmetic class,
and state-dependent marginal substitution. These properties convert
self-anchoring from an interpretive proposal into a formally controlled
aggregation geometry.

\begin{theorem}[Basic properties of a fixed profile-anchored operator]
\label{thm:basic-profile-properties}
Fix an admissible anchor $A:[0,1]^n\to[0,1]$, a contribution vector
$\varphi\in\mathcal W_n$, and a retention vector
$\alpha\in[0,1]^n$. Then the associated operator
\[
P_{A,\varphi,\alpha}(r)
=
\sum_{j=1}^{n}
\varphi_j
K_{\alpha_j}\bigl(r_j,A(r)\bigr)
\]
has the following properties:
\begin{enumerate}[label=\textnormal{(\roman*)}]
\item $P_{A,\varphi,\alpha}$ is continuous on $[0,1]^n$;
\item $P_{A,\varphi,\alpha}$ is coordinatewise nondecreasing;
\item it is diagonally calibrated, or equivalently, idempotent in the
aggregation sense:
\[
P_{A,\varphi,\alpha}(c,\ldots,c)=c
\qquad
\text{for every }c\in[0,1];
\]
\item it is internal:
\[
\min_j r_j
\leq
P_{A,\varphi,\alpha}(r)
\leq
\max_j r_j.
\]
\end{enumerate}
\end{theorem}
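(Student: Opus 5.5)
The argument reduces all four properties to facts about the scalar kernel $K_\alpha$ and the admissible anchor $A$, using that $P_{A,\varphi,\alpha}$ is a convex combination with weights $\varphi_j\ge 0$, $\sum_j\varphi_j=1$. The first step is to record three kernel facts for each fixed $\alpha\in[0,1]$: $K_\alpha$ is continuous on $[0,1]^2$, nondecreasing in each argument, and satisfies $\min\{x,a\}\le K_\alpha(x,a)\le\max\{x,a\}$.

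\emph{Continuity of the kernel.} For $0<\alpha<1$, the maps $x\mapsto x^{\alpha}$ and $a\mapsto a^{1-\alpha}$ are continuous on $[0,1]$ under the convention $0^{\beta}=0$ for $\beta>0$. Hence their product is jointly continuous on the closed square, including the edges $x=0$ or $a=0$.

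\emph{Endpoint and interior cases.} For $\alpha\in\{0,1\}$ the kernel is a coordinate projection, so all three facts are immediate. For $0<\alpha<1$:
\begin{itemize}
\item Monotonicity follows because each factor is nondecreasing and nonnegative.
\item Internality follows by bounding $x^{\alpha}a^{1-\alpha}$ between $m^{\alpha}m^{1-\alpha}=m$ and $M^{\alpha}M^{1-\alpha}=M$, where $m=\min\{x,a\}$ and $M=\max\{x,a\}$, and then applying monotonicity.
\end{itemize}
The only delicate point in this step is the boundary: the kernel is defined piecewise in $\alpha$, not in $(x,a)$. For fixed $\alpha$ nothing breaks at $x=0$ or $a=0$; we never need continuity in $\alpha$.

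\textbf{(i) Continuity.} Each summand $r\mapsto K_{\alpha_j}(r_j,A(r))$ is a composition of the continuous map $r\mapsto(r_j,A(r))\in[0,1]^2$ with the continuous $K_{\alpha_j}$. A finite linear combination of continuous functions is continuous. The map lands in $[0,1]^2$ because $A$ takes values in $[0,1]$.

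\textbf{(ii) Monotonicity.} Let $r\le r'$ differ only in coordinate $k$. The anchor is coordinatewise nondecreasing, so $A(r)\le A(r')$. For $j\neq k$ the first argument is unchanged and the second does not decrease. For $j=k$ both arguments do not decrease. By monotonicity of $K_{\alpha_j}$ in each argument, every summand is nondecreasing, and the $\varphi_j$ are nonnegative, so $P_{A,\varphi,\alpha}(r)\le P_{A,\varphi,\alpha}(r')$.

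\textbf{(iii) Diagonal calibration.} At the constant profile $(c,\dots,c)$, calibration of $A$ gives $A=c$. Then $K_{\alpha_j}(c,c)=c$ for every $\alpha_j$, by the three cases of the definition, or directly from internality of the kernel. Hence the sum equals $c\sum_j\varphi_j=c$.

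\textbf{(iv) Internality.} Set $\underline r=\min_j r_j$ and $\overline r=\max_j r_j$. The internality of admissible anchors, already shown in the text, gives $\underline r\le A(r)\le\overline r$. Kernel internality then places each $K_{\alpha_j}(r_j,A(r))$ in $[\min\{r_j,A(r)\},\max\{r_j,A(r)\}]\subseteq[\underline r,\overline r]$. Since $\varphi\in\mathcal W_n$ makes $P$ a convex combination of these terms, $\underline r\le P_{A,\varphi,\alpha}(r)\le\overline r$.

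Alternatively, (iv) follows from (ii) and (iii) by the same sandwich argument used for the anchor. I expect no step to be a genuine obstacle; the only point requiring care is the boundary continuity of the kernel, handled above.
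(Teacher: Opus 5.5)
Your proposal is correct and follows the paper's proof essentially step for step. Continuity comes from composing with the fixed-$\alpha$ kernel, monotonicity from the monotone anchor and kernel together with $\varphi_j\ge 0$, and calibration from $A(c\mathbf 1)=c$ and $K_{\alpha_j}(c,c)=c$. The only variation is in (iv). There the paper uses exactly the sandwich $P_{A,\varphi,\alpha}(\underline r\mathbf 1)\le P_{A,\varphi,\alpha}(r)\le P_{A,\varphi,\alpha}(\overline r\mathbf 1)$ that you list as your alternative, and your primary route through anchor internality, kernel internality and convexity is equally valid.
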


\begin{proof}
For each $j$, the exponent $\alpha_j$ is fixed. The map
\[
r\longmapsto \bigl(r_j,A(r)\bigr)
\]
is continuous on $[0,1]^n$, and $K_{\alpha_j}$ is continuous on
$[0,1]^2$ under the boundary convention in
Equation~\eqref{eq:anchoring-kernel}. Hence each composite map
\[
r\longmapsto K_{\alpha_j}\bigl(r_j,A(r)\bigr)
\]
is continuous. Since $P_{A,\varphi,\alpha}$ is a finite weighted sum of
these maps, it is continuous on $[0,1]^n$.

Now suppose that $r\leq s$ componentwise. The coordinatewise
monotonicity of $A$ gives
\[
A(r)\leq A(s).
\]
For each fixed $\alpha_j\in[0,1]$, the kernel $K_{\alpha_j}$ is
nondecreasing in both arguments. Indeed, the cases $\alpha_j=0$ and
$\alpha_j=1$ select the second and first arguments, respectively,
whereas for $0<\alpha_j<1$ both power factors are nonnegative and
nondecreasing. Therefore,
\[
K_{\alpha_j}\bigl(r_j,A(r)\bigr)
\leq
K_{\alpha_j}\bigl(s_j,A(s)\bigr)
\]
for every $j$. Since $\varphi_j\geq0$, summation yields
\[
P_{A,\varphi,\alpha}(r)
\leq
P_{A,\varphi,\alpha}(s).
\]

For any $c\in[0,1]$, diagonal calibration of the anchor gives
\[
A(c\mathbf 1)=c.
\]
The kernel definition then yields
\[
K_{\alpha_j}(c,c)=c
\]
for every $j$. Since $\varphi\in\mathcal W_n$,
\[
P_{A,\varphi,\alpha}(c\mathbf 1)
=
c\sum_{j=1}^{n}\varphi_j
=
c.
\]

Finally, let
\[
\underline r=\min_j r_j,
\qquad
\overline r=\max_j r_j.
\]
Then
\[
\underline r\mathbf 1
\leq
r
\leq
\overline r\mathbf 1.
\]
Coordinatewise monotonicity and diagonal calibration therefore imply
\[
\underline r
=
P_{A,\varphi,\alpha}(\underline r\mathbf 1)
\leq
P_{A,\varphi,\alpha}(r)
\leq
P_{A,\varphi,\alpha}(\overline r\mathbf 1)
=
\overline r,
\]
which proves internality.
\end{proof}

\begin{corollary}[Canonical strict monotonicity]
Let $P_w^{\mathrm{PEJWAK}}$ be the canonical PEJWAK operator. If
$r\leq s$ componentwise and
\[
r_k<s_k
\]
for at least one criterion with $w_k>0$, then
\[
P_w^{\mathrm{PEJWAK}}(r)
<
P_w^{\mathrm{PEJWAK}}(s).
\]
\end{corollary}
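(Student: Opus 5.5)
The plan is to reduce the claim to the weak monotonicity already proved in Theorem~\ref{thm:basic-profile-properties}(ii) and then find one term of the sum in Equation~\eqref{eq:canonical-pejwak} that increases strictly. The canonical operator is the member $P_{A,\varphi,\alpha}$ with $A=S_w$, $\alpha_j=w_j$ and $\varphi_j=\varphi_j(w)$. Here $S_w$ is a weighted arithmetic mean, so it is continuous, coordinatewise nondecreasing and diagonally calibrated, hence admissible. Moreover $\varphi(w)\in\mathcal W_n$. The proof of Theorem~\ref{thm:basic-profile-properties} then gives, for $r\leq s$, the termwise inequalities
\[
K_{w_j}\bigl(r_j,S_w(r)\bigr)\leq K_{w_j}\bigl(s_j,S_w(s)\bigr)
\qquad\text{for every } j .
\]
Since every $\varphi_j(w)\geq0$, it suffices to show that the term with index $k$ increases strictly. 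That is enough because $\varphi_k(w)>0$ exactly when $w_k>0$, by the support property of the square-root map.

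First I record that the anchor itself moves strictly:
\[
S_w(s)-S_w(r)=\sum_j w_j(s_j-r_j)\geq w_k(s_k-r_k)>0 .
\]
In particular $S_w(s)>0$. Also $s_k>r_k\geq0$ gives $s_k>0$.

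Next I split according to the value of $w_k\in(0,1]$. If $w_k=1$, then $K_1(x,a)=x$ and the $k$-th term is just $r_k<s_k$, which is strict. If $0<w_k<1$, I use the power form of the kernel and the chain
\[
K_{w_k}\bigl(s_k,S_w(s)\bigr)=s_k^{w_k}S_w(s)^{1-w_k}
> r_k^{w_k}S_w(s)^{1-w_k}
\geq r_k^{w_k}S_w(r)^{1-w_k}
=K_{w_k}\bigl(r_k,S_w(r)\bigr).
\]
The strict step holds because $t\mapsto t^{w_k}$ is strictly increasing on $[0,1]$ and the common factor $S_w(s)^{1-w_k}$ is strictly positive. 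The weak step holds because $t\mapsto t^{1-w_k}$ is nondecreasing and $S_w(r)\leq S_w(s)$.

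Multiplying the strict inequality for term $k$ by $\varphi_k(w)>0$ and adding the weak inequalities for the other terms gives the corollary.

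The only delicate point is the boundary, where $r_k$ or $S_w(r)$ may be zero. A naive argument that multiplies two increasing factors could then lose strictness, because one factor might vanish. This is why I place the strictly increasing factor against the \emph{larger} anchor $S_w(s)$, which is positive, before comparing anchors. This choice avoids any need for flooring or $\epsilon$-regularization.
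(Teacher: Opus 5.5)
Your proof is correct and follows essentially the same route as the paper. Both arguments show that the self-anchor increases strictly, split into the cases $w_k=1$ and $0<w_k<1$, and in the second case take the strict step against the positive anchor $S_w(s)$. Both then conclude via termwise weak monotonicity of the kernel and $\varphi_k(w)>0$. The only cosmetic difference is that you cite the termwise inequalities from the proof of Theorem~\ref{thm:basic-profile-properties} after checking that $S_w$ is admissible, whereas the paper restates kernel monotonicity directly.
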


\begin{proof}
Since $r\leq s$ componentwise and $r_k<s_k$ for an active criterion
$k$, we have
\[
S_w(s)-S_w(r)
=
\sum_{j=1}^{n}w_j(s_j-r_j)
\geq
w_k(s_k-r_k)
>
0.
\]
Hence $S_w(r)<S_w(s)$. Moreover, $w_k>0$ implies
$\varphi_k(w)>0$.

We first show that the $k$th anchored term increases strictly. If
$w_k=1$, the boundary definition of the kernel gives
\[
K_{1}\bigl(r_k,S_w(r)\bigr)
=
r_k
<
s_k
=
K_{1}\bigl(s_k,S_w(s)\bigr).
\]
If $0<w_k<1$, then $s_k>0$ and
\[
S_w(s)\geq w_ks_k>0.
\]
Therefore,
\[
\begin{aligned}
K_{w_k}\bigl(r_k,S_w(r)\bigr)
&=
r_k^{w_k}S_w(r)^{1-w_k}\\
&\leq
r_k^{w_k}S_w(s)^{1-w_k}\\
&<
s_k^{w_k}S_w(s)^{1-w_k}\\
&=
K_{w_k}\bigl(s_k,S_w(s)\bigr).
\end{aligned}
\]

For every $j$, coordinatewise monotonicity of the boundary-extended
kernel yields
\[
K_{w_j}\bigl(r_j,S_w(r)\bigr)
\leq
K_{w_j}\bigl(s_j,S_w(s)\bigr).
\]
All corresponding differences are therefore nonnegative, while the
difference for $j=k$ is strictly positive and its contribution
coefficient satisfies $\varphi_k(w)>0$. Consequently,
\[
\begin{aligned}
P_w^{\mathrm{PEJWAK}}(s)
-
P_w^{\mathrm{PEJWAK}}(r)
&=
\sum_{j=1}^{n}\varphi_j(w)
\left[
K_{w_j}\bigl(s_j,S_w(s)\bigr)
-
K_{w_j}\bigl(r_j,S_w(r)\bigr)
\right]\\
&>0.
\end{aligned}
\]
Thus,
\[
P_w^{\mathrm{PEJWAK}}(r)
<
P_w^{\mathrm{PEJWAK}}(s).
\]
\end{proof}

The preceding results establish the regularity of the aggregate. Before
examining the profile coupling created by the shared anchor, it is useful
to make the geometry of the anchoring kernel explicit. For
$x,a>0$ and $\alpha\in[0,1]$,
\[
\log K_\alpha(x,a)
=
(1-\alpha)\log a
+
\alpha\log x,
\]
or, equivalently,
\begin{equation}
\log
\frac{K_\alpha(x,a)}{a}
=
\alpha
\log
\frac{x}{a}.
\label{eq:exact-log-retention}
\end{equation}
Thus, in logarithmic coordinates, $K_\alpha(x,a)$ lies exactly the
fraction $\alpha$ of the way from the anchor $a$ to the criterion
score $x$.

\begin{theorem}[Exact logarithmic retention law]
For $x,a>0$ and $\alpha\in[0,1]$, the anchoring kernel retains exactly
an $\alpha$-fraction of the logarithmic distance between the criterion
score $x$ and the anchor $a$:
\begin{equation}
\left|
\log
\frac{K_\alpha(x,a)}{a}
\right|
=
\alpha
\left|
\log
\frac{x}{a}
\right|.
\label{eq:log-distance-retention}
\end{equation}
Moreover,
\[
K_0(x,a)=a,
\qquad
K_1(x,a)=x,
\]
and, for $\alpha\in(0,1)$,
\begin{equation}
\frac{\mathrm d}{\mathrm d\alpha}
K_\alpha(x,a)
=
K_\alpha(x,a)
\log\frac{x}{a}.
\label{eq:kernel-alpha-derivative}
\end{equation}
Consequently, the path $\alpha\mapsto K_\alpha(x,a)$ is strictly
decreasing from $a$ to $x$ when $x<a$, strictly increasing from $a$
to $x$ when $x>a$, and constant at $a=x$ when $x=a$.
\end{theorem}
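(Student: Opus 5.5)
The argument works entirely in logarithmic coordinates, which are available because $x,a>0$ and hence every value of $K_\alpha(x,a)$ is strictly positive. I first check that the closed form $K_\alpha(x,a)=x^{\alpha}a^{1-\alpha}$ holds for \emph{all} $\alpha\in[0,1]$ when $x,a>0$, including the endpoints. Indeed, $x^0a^1=a$ and $x^1a^0=x$, which agree with the boundary cases of Equation~\eqref{eq:anchoring-kernel}. These two evaluations also give the identities $K_0(x,a)=a$ and $K_1(x,a)=x$ in the statement. With a single formula on the closed interval, taking logarithms gives
\[
\log K_\alpha(x,a)=\alpha\log x+(1-\alpha)\log a ,
\]
and subtracting $\log a$ yields Equation~\eqref{eq:exact-log-retention}, namely $\log\bigl(K_\alpha(x,a)/a\bigr)=\alpha\log(x/a)$. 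Since $\alpha\ge 0$, taking absolute values gives Equation~\eqref{eq:log-distance-retention}.

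For the derivative, I write $K_\alpha(x,a)=a\exp\bigl(\alpha\log(x/a)\bigr)$. This expresses the kernel as a smooth function of $\alpha$ on all of $\mathbb R$, and in particular on $(0,1)$. The chain rule then gives
\[
\frac{\mathrm d}{\mathrm d\alpha}K_\alpha(x,a)=K_\alpha(x,a)\log\frac{x}{a},
\]
which is Equation~\eqref{eq:kernel-alpha-derivative}.

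For the monotonicity conclusions, I use that $K_\alpha(x,a)>0$, so the derivative has the sign of $\log(x/a)$ throughout $(0,1)$:
\begin{enumerate}[label=\textnormal{(\alph*)}]
\item if $x<a$, the derivative is strictly negative;
\item if $x>a$, it is strictly positive;
\item if $x=a$, it vanishes identically.
\end{enumerate}
The path $\alpha\mapsto K_\alpha(x,a)$ is continuous on $[0,1]$, being the restriction of the smooth exponential expression above. By the mean value theorem, strict monotonicity on the open interval therefore extends to the closed interval. Combined with the endpoint values $a$ at $\alpha=0$ and $x$ at $\alpha=1$, this gives strict decrease from $a$ to $x$ when $x<a$ and strict increase from $a$ to $x$ when $x>a$. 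When $x=a$, the formula gives $K_\alpha(a,a)=a^{\alpha}a^{1-\alpha}=a$ for every $\alpha$, so the path is constant.

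The argument is elementary, and the only point requiring care is the endpoints $\alpha\in\{0,1\}$, where the kernel is defined by cases rather than by the power formula. The main thing to verify is that, under the hypothesis $x,a>0$, the piecewise definition coincides with the single exponential expression on the closed interval. That coincidence is what allows continuity up to the endpoints and justifies the strict monotonicity on $[0,1]$. Positivity of $x$ is essential here, since for $x=0$ the identity $0^0=1$ would break the match with the case $\alpha=0$. This hypothesis is exactly why the theorem is stated only for $x,a>0$.
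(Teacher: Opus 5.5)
Your proposal is correct and follows essentially the same route as the paper: log-linearization for the retention identity, the representation $K_\alpha(x,a)=a\exp\bigl(\alpha\log(x/a)\bigr)$ for the derivative, and the sign of $\log(x/a)$ together with the endpoint values for monotonicity. Your explicit check that the piecewise definition agrees with the power formula at $\alpha\in\{0,1\}$, and your use of the mean value theorem to pass to the closed interval, are slightly more careful than the paper. One small correction to your closing remark: with the convention $0^0=1$, the power formula still gives $a$ at $x=0$, $\alpha=0$, so the match there does not break. What actually fails when $x=0$ is continuity of $\alpha\mapsto K_\alpha(0,a)$ at $\alpha=0$ (it jumps from $a$ to $0$), together with the existence of $\log(x/a)$.
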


\begin{proof}
Equation~\eqref{eq:log-distance-retention} follows immediately by
taking absolute values in
Equation~\eqref{eq:exact-log-retention}. The endpoint identities follow
by setting $\alpha=0$ and $\alpha=1$ in
$K_\alpha(x,a)=x^\alpha a^{1-\alpha}$.

For fixed $x,a>0$, the kernel can be written as
\[
K_\alpha(x,a)
=
a\exp\left(
\alpha\log\frac{x}{a}
\right).
\]
Differentiation with respect to $\alpha$ gives
Equation~\eqref{eq:kernel-alpha-derivative}. Since
$K_\alpha(x,a)>0$, the sign of the derivative is determined by
$\log(x/a)$. It is negative when $x<a$, positive when $x>a$, and
zero when $x=a$. Together with the endpoint identities, this proves
the stated monotonic behaviour.
\end{proof}

For fixed $x,a>0$, the retention exponent regulates departures from the
anchor through the same logarithmic geometry on either side of $a$. If
$x<a$, a smaller $\alpha$ draws $K_\alpha(x,a)$ closer to $a$ and
attenuates the shortfall, whereas a larger $\alpha$ preserves more of
that shortfall. If $x>a$, the corresponding mechanism applies to the
surplus relative to the anchor. The kernel therefore scales downward
and upward logarithmic deviations by the same retention factor. This
interpretation is an algebraic property of the kernel and does not
presuppose a particular behavioural model of the decision maker.

A small calculation makes this geometry concrete. Let $a=1/4$ and
$\alpha=1/2$. For a score below the anchor,
\[
K_{1/2}\left(\frac{1}{16},\frac{1}{4}\right)
=
\frac{1}{8},
\]
whereas, for a score above the same anchor,
\[
K_{1/2}\left(1,\frac{1}{4}\right)
=
\frac{1}{2}.
\]
The original score-to-anchor ratios are respectively $1/4$ and $4$,
while the corresponding kernel-to-anchor ratios are $1/2$ and $2$.
Hence the kernel halves both the logarithmic shortfall and the
logarithmic surplus. The anchoring effect is therefore geometric, not
an arithmetic displacement toward the anchor.

\begin{remark}[Conditional scope of the retention path]
The path associated with
Equation~\eqref{eq:exact-log-retention} is
$\alpha\mapsto K_\alpha(x,a)$ for fixed $x$ and $a$; it is therefore
a path at the level of the kernel. In canonical PEJWAK,
$\alpha_j=w_j$. However, an admissible perturbation of the criterion
importance vector $w$ changes the retention exponents and the normalized
contribution vector $\varphi(w)$, and generally also changes the weighted
self-anchor $S_w(r)$. Moreover, because $w\in\mathcal W_n$, changing one
component requires at least one other component to adjust. Hence the path
in Equation~\eqref{eq:exact-log-retention} isolates the pure retention
effect for a fixed score and anchor. It should not be interpreted as the
complete comparative statics of
$P_w^{\mathrm{PEJWAK}}(r)$ along a change in $w$.
\end{remark}

With the retention geometry established, attention now turns to the
criterion coupling created by the shared anchor. Restrict the analysis
to the open positive interior $r\in(0,1)^n$. Suppressing the alternative
index, write
\[
S=S_w(r)=\sum_{\ell=1}^{n}w_\ell r_\ell
\]
and define
\begin{equation}
\begin{aligned}
B_w(r)
&=
\sum_{\ell=1}^{n}
\varphi_\ell(1-w_\ell)
r_\ell^{w_\ell}S^{-w_\ell}\\
&=
\sum_{\ell=1}^{n}
\varphi_\ell(1-w_\ell)
\left(\frac{r_\ell}{S}\right)^{w_\ell}.
\end{aligned}
\label{eq:profile-coupling-B}
\end{equation}
Equivalently,
\[
B_w(r)
=
\left.
\frac{\partial}{\partial a}
\sum_{\ell=1}^{n}
\varphi_\ell r_\ell^{w_\ell}a^{1-w_\ell}
\right|_{a=S_w(r)}.
\]
Thus, $B_w(r)$ measures the aggregate marginal response to the shared
anchor and isolates the indirect channel through which a change in one
criterion propagates across the complete performance profile.

\begin{proposition}[Profile coupling through the endogenous anchor]
Fix $w\in\mathcal W_n$ and let $r\in(0,1)^n$. With $B_w(r)$ defined
in Equation~\eqref{eq:profile-coupling-B}, the marginal effect of every
active criterion $p$ is
\begin{equation}
\frac{\partial P_w^{\mathrm{PEJWAK}}}{\partial r_p}
=
\varphi_pw_pr_p^{w_p-1}S^{1-w_p}
+
w_pB_w(r).
\label{eq:pejwak-gradient}
\end{equation}
For any two distinct active criteria $p\neq q$,
\begin{align}
\frac{\partial^2P_w^{\mathrm{PEJWAK}}}
{\partial r_p\partial r_q}
=
w_pw_q\Bigg[
&
\varphi_p(1-w_p)
r_p^{w_p-1}S^{-w_p}
+
\varphi_q(1-w_q)
r_q^{w_q-1}S^{-w_q}
\notag\\
&
-
\sum_{\ell=1}^{n}
\varphi_\ell w_\ell(1-w_\ell)
r_\ell^{w_\ell}S^{-w_\ell-1}
\Bigg].
\label{eq:pejwak-mixed-partial}
\end{align}
For every such pair, the mixed derivative does not vanish identically
on $(0,1)^n$. Consequently, whenever at least two criteria are active,
canonical PEJWAK is not additively separable on $(0,1)^n$ as
\[
P(r)=\sum_{j=1}^{n}g_j(r_j).
\]
\end{proposition}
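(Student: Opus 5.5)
The plan is to work throughout on the open interior $(0,1)^n$ with the expanded representation in Equation~\eqref{eq:canonical-expanded}. There $S=S_w(r)\geq\min_\ell r_\ell>0$, so every term $r_\ell^{w_\ell}S^{1-w_\ell}$ is $C^\infty$ and ordinary calculus applies. Inactive criteria carry $\varphi_\ell(w)=0$, so it is harmless to sum over all $\ell$. The only structural input needed is $\partial S/\partial r_p=w_p$.

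\textbf{Gradient.} I will differentiate $P=\sum_\ell\varphi_\ell r_\ell^{w_\ell}S^{1-w_\ell}$ with respect to $r_p$ using the product rule, which gives two pieces.
\begin{itemize}
\item The direct term comes from the $p$th summand's own factor $r_p^{w_p}$ and equals $\varphi_pw_pr_p^{w_p-1}S^{1-w_p}$.
\item The anchor terms come from every summand through $S^{1-w_\ell}$. They sum to $\sum_\ell\varphi_\ell(1-w_\ell)r_\ell^{w_\ell}S^{-w_\ell}\cdot w_p=w_pB_w(r)$, by Equation~\eqref{eq:profile-coupling-B}.
\end{itemize}
This yields Equation~\eqref{eq:pejwak-gradient}. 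It remains valid even if $w_p=1$, since then $S^{1-w_p}=1$ and $B_w(r)=0$.

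\textbf{Mixed partial.} For distinct active $p\neq q$, each weight is strictly below $1$, because the simplex constraint forces $w_p+w_q\leq1$ with both weights positive. I will differentiate Equation~\eqref{eq:pejwak-gradient} with respect to $r_q$, which again gives two pieces.
\begin{itemize}
\item The direct term depends on $r_q$ only through $S$. It contributes $\varphi_pw_p(1-w_p)r_p^{w_p-1}S^{-w_p}w_q$.
\item In $w_pB_w(r)$, the $q$th summand depends on $r_q$ both directly and through $S$, while every other summand depends on it only through $S$. Together these contribute
\[
w_p\Bigl[\varphi_q(1-w_q)w_qr_q^{w_q-1}S^{-w_q}-w_q\sum_\ell\varphi_\ell w_\ell(1-w_\ell)r_\ell^{w_\ell}S^{-w_\ell-1}\Bigr].
\]
\end{itemize}
Factoring out $w_pw_q$ gives Equation~\eqref{eq:pejwak-mixed-partial}. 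This step is routine bookkeeping.

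\textbf{Non-vanishing.} This is the step that needs an argument, and I will use a limiting regime rather than solving for an explicit point. Fix $r_q$ and all the other coordinates in $(0,1)$, and let $r_p\downarrow0$.
\begin{itemize}
\item $S$ stays at least $w_qr_q>0$ and at most $1$.
\item Hence the $q$-term and the whole subtracted sum stay bounded; in particular each $r_\ell^{w_\ell}\leq1$ and $S^{-w_\ell-1}\leq(w_qr_q)^{-2}$.
\item Since $w_p<1$, the term $\varphi_p(1-w_p)r_p^{w_p-1}S^{-w_p}$ tends to $+\infty$, because $\varphi_p>0$, $1-w_p>0$ and $S^{-w_p}\geq1$.
\end{itemize}
So the bracket is strictly positive for small $r_p$. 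Multiplying by $w_pw_q>0$, the mixed partial is nonzero at some point of $(0,1)^n$.

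\textbf{Separability.} Finally I will rule out additive separability. Suppose $P(r)=\sum_jg_j(r_j)$ on $(0,1)^n$. Freezing all coordinates except $r_j$ shows that $g_j$ equals $P$ restricted to a line, minus a constant, so each $g_j$ is $C^2$. Then $\partial^2P/\partial r_p\partial r_q\equiv0$ for $p\neq q$, which contradicts the previous step whenever two criteria are active.
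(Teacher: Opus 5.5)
Your proposal is correct and follows essentially the same route as the paper: the same chain-rule computation of the gradient and mixed partial, the same blow-up argument as $r_p\to0^+$ with $S$ bounded below by $w_qr_q>0$, and the same contradiction with the vanishing mixed partials of a separable form. Your one addition is a useful strengthening. You observe that each $g_j$ is a restriction of the smooth map $P$ minus a constant, hence $C^2$, which makes explicit the twice-differentiability that the paper's final step tacitly assumes.
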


\begin{proof}
Since
\[
\frac{\partial S}{\partial r_p}=w_p,
\]
the chain rule gives
\[
\begin{aligned}
\frac{\partial P_w^{\mathrm{PEJWAK}}}{\partial r_p}
&=
\sum_{\ell=1}^{n}\varphi_\ell
\Bigl[
\delta_{\ell p}w_\ell
r_\ell^{w_\ell-1}S^{1-w_\ell}
+
w_p(1-w_\ell)
r_\ell^{w_\ell}S^{-w_\ell}
\Bigr]\\
&=
\varphi_pw_pr_p^{w_p-1}S^{1-w_p}
+
w_pB_w(r),
\end{aligned}
\]
where $\delta_{\ell p}$ denotes the Kronecker delta. This proves
Equation~\eqref{eq:pejwak-gradient}.

For a distinct active criterion $q$,
\[
\frac{\partial B_w(r)}{\partial r_q}
=
\varphi_qw_q(1-w_q)
r_q^{w_q-1}S^{-w_q}
-
w_q
\sum_{\ell=1}^{n}
\varphi_\ell w_\ell(1-w_\ell)
r_\ell^{w_\ell}S^{-w_\ell-1}.
\]
Differentiating the first term in
Equation~\eqref{eq:pejwak-gradient} with respect to $r_q$ and combining
the result with $w_p\partial B_w(r)/\partial r_q$ yields
Equation~\eqref{eq:pejwak-mixed-partial}.

It remains to prove that this mixed derivative is not identically zero.
Fix all coordinates other than $r_p$ at positive values and write
$r_p=t$. Since $q$ is active,
\[
S(t)
=
w_pt+\sum_{\ell\neq p}w_\ell r_\ell
\longrightarrow
S_0
=
\sum_{\ell\neq p}w_\ell r_\ell
\geq
w_qr_q
>
0
\]
as $t\to0^+$. Because $p$ and $q$ are distinct active criteria,
\[
0<w_p<1,
\qquad
\varphi_p>0.
\]
Hence
\[
\varphi_p(1-w_p)
t^{w_p-1}S(t)^{-w_p}
\longrightarrow
+\infty,
\]
whereas all remaining terms inside the brackets in
Equation~\eqref{eq:pejwak-mixed-partial} remain bounded. Therefore,
\[
\frac{\partial^2P_w^{\mathrm{PEJWAK}}}
{\partial r_p\partial r_q}
\]
cannot vanish identically on $(0,1)^n$.

Any twice differentiable additively separable representation has
identically zero mixed derivatives in distinct coordinates.
Accordingly, canonical PEJWAK cannot admit such a representation
whenever at least two criteria are active.
\end{proof}

The nonzero mixed partials are the differential signature of endogenous
profile coupling. A change in one criterion moves the shared self-anchor
and thereby modifies the anchored terms and local marginal effects of the
remaining criteria. PEJWAK therefore creates genuine cross-criterion
dependence without discarding fixed criterion identity or introducing a
separate interaction matrix. The dependence has a precise semantics: it
is state-dependent and mediated by the alternative's own profile.
Accordingly, its sign should not be relabelled as a fixed pairwise
synergy, redundancy, or veto parameter; those concepts require an
explicit interaction model with different primitives.

\begin{theorem}[Failure of bisymmetry]
The equal-importance binary restriction
$F:[0,1]^2\to[0,1]$ of canonical PEJWAK,
\begin{equation}
F(x,y)
=
\frac{\sqrt{(x+y)/2}}{2}
\bigl(\sqrt{x}+\sqrt{y}\bigr),
\label{eq:binary-equal-pejwak}
\end{equation}
violates bisymmetry and, consequently, is not a classical
quasi-arithmetic mean.
\end{theorem}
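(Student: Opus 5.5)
The plan is to exhibit one explicit quadruple at which the bisymmetry identity
\[
F\bigl(F(a,b),F(c,d)\bigr)=F\bigl(F(a,c),F(b,d)\bigr)
\]
fails. I would then invoke the Acz\'el characterization \cite{Aczel1948}: every classical quasi-arithmetic mean is bisymmetric, so $F$ cannot be one.

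First I will confirm Equation~\eqref{eq:binary-equal-pejwak} from the canonical definition. For $w=(1/2,1/2)$ we have $S=(x+y)/2$ and $\varphi_1=\varphi_2=1/2$. Each anchored term is $K_{1/2}(x,S)=\sqrt{x}\sqrt{S}$, which is also valid on the boundary by Equation~\eqref{eq:anchoring-kernel}. Hence $F(x,y)=\tfrac12\sqrt{S}(\sqrt x+\sqrt y)$.

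Next I will record two structural facts that guide the choice of test point.
\begin{itemize}
\item $F$ is symmetric. So quadruples of the form $(a,b,c,d)=(1,0,0,1)$, or any choice where swapping $b$ and $c$ merely permutes the inner arguments, give trivially equal sides.
\item $F$ is positively $1$-homogeneous, with $F(x,0)=x/(2\sqrt2)$. So quadruples in which zeros pair off, such as $(1,0,1/4,0)$, also collapse to identities via $F(tu,tv)=tF(u,v)$.
\end{itemize}
The main obstacle is therefore avoiding these degenerate coincidences. I would pick a quadruple that uses a single zero together with two distinct nonzero values. Concretely I take $(a,b,c,d)=(1,0,1/4,1/4)$ and set
\[
t=F(1,0)=\frac{1}{2\sqrt2},
\qquad
g=F\!\left(1,\tfrac14\right)=\tfrac34\sqrt{5/8}.
\]
Note that $F(0,1/4)=t/4$ and $F(1/4,1/4)=1/4$ by diagonal calibration (Theorem~\ref{thm:basic-profile-properties}).

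The two sides then become
\[
F\bigl(F(1,0),F(\tfrac14,\tfrac14)\bigr)=F\!\left(t,\tfrac14\right),
\qquad
F\bigl(F(1,\tfrac14),F(0,\tfrac14)\bigr)=F\!\left(g,\tfrac t4\right).
\]
Direct evaluation gives approximately $0.3007$ and $0.3115$, respectively, so bisymmetry fails.

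To make the argument certified rather than purely numerical, I will bound each quantity with rational enclosures of the square roots. This separates the two values by a margin far larger than the rounding error, which establishes the strict inequality rigorously. The conclusion that $F$ is not a classical quasi-arithmetic mean then follows at once from the contrapositive of the Acz\'el theorem.
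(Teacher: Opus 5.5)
Your proof is correct and follows the same route as the paper: exhibit one quadruple at which bisymmetry fails, then use the elementary fact that every quasi-arithmetic mean is bisymmetric. Only that easy direction is needed, not Acz\'el's characterization; applying $f$ to both sides yields the average of the four values $f(a),f(b),f(c),f(d)$. Your values check out, $F(t,\tfrac{1}{4})\approx 0.30066$ and $F(g,\tfrac{t}{4})\approx 0.31148$, a gap of about $0.0108$.

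The only difference is the test point. The paper uses the interior quadruple $(\tfrac{1}{16},\tfrac{1}{4},\tfrac{3}{4},\tfrac{1}{4})$, with a gap of about $0.00168$, so it also shows the failure on the open square. Your boundary quadruple $(1,0,\tfrac{1}{4},\tfrac{1}{4})$ suffices for $F$ on $[0,1]^2$, and by continuity of $F$ the strict inequality carries over to nearby interior quadruples in any case.
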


\begin{proof}
A classical binary quasi-arithmetic mean generated by a continuous
strictly monotone function $f$ has the form
\[
M_f(x,y)
=
f^{-1}
\left(
\frac{f(x)+f(y)}{2}
\right).
\]
It necessarily satisfies the bisymmetry identity
\begin{equation}
M_f\bigl(M_f(a,b),M_f(c,d)\bigr)
=
M_f\bigl(M_f(a,c),M_f(b,d)\bigr)
\label{eq:bisymmetry-identity}
\end{equation}
for all $a,b,c,d\in[0,1]$
\cite{Aczel1948,Beliakov2007}. Indeed, applying $f$ to either side of
Equation~\eqref{eq:bisymmetry-identity} gives
\[
\frac{f(a)+f(b)+f(c)+f(d)}{4}.
\]

Now take
\[
(a,b,c,d)
=
\left(
\frac{1}{16},
\frac{1}{4},
\frac{3}{4},
\frac{1}{4}
\right).
\]
Direct evaluation of Equation~\eqref{eq:binary-equal-pejwak} gives
\[
F\bigl(F(a,b),F(c,d)\bigr)
\approx
0.3033513954,
\]
whereas
\[
F\bigl(F(a,c),F(b,d)\bigr)
\approx
0.3016690700.
\]
The two values differ by approximately $0.0016823254$. Hence $F$
violates bisymmetry and cannot be represented as
\[
f^{-1}
\left(
\frac{f(x)+f(y)}{2}
\right)
\]
for any continuous strictly monotone generator $f$.
\end{proof}

The local compensation geometry of canonical PEJWAK can be examined
through the marginal rate of substitution (MRS). Fix
$w\in\mathcal W_n$ and $r\in(0,1)^n$, and let $p\neq q$ be two active
criteria. Consider a local variation in $r_p$ and $r_q$ that holds all
remaining criterion scores fixed and leaves the aggregate score
unchanged. Along such a local equal-score path,
\[
\mathrm dP_w^{\mathrm{PEJWAK}}
=
\frac{\partial P_w^{\mathrm{PEJWAK}}}{\partial r_p}
\,\mathrm dr_p
+
\frac{\partial P_w^{\mathrm{PEJWAK}}}{\partial r_q}
\,\mathrm dr_q
=
0.
\]
Because $q$ is active, Equation~\eqref{eq:pejwak-gradient} gives
$\partial P_w^{\mathrm{PEJWAK}}/\partial r_q>0$ on this domain.
Therefore, suppressing the dependence on $r$ and $w$ in the notation,
the marginal rate of substitution is well defined by
\begin{equation}
\operatorname{MRS}_{pq}^{\mathrm{PEJWAK}}
=
-
\left.
\frac{\mathrm dr_q}{\mathrm dr_p}
\right|_{\mathrm dP_w^{\mathrm{PEJWAK}}=0}
=
\frac{
\partial P_w^{\mathrm{PEJWAK}}/\partial r_p
}{
\partial P_w^{\mathrm{PEJWAK}}/\partial r_q
}.
\label{eq:mrs-definition}
\end{equation}
It measures the first-order reduction in $r_q$ that locally compensates
for a marginal increase in $r_p$; it is not a finite-change exchange
rate.

\begin{proposition}[State-dependent marginal substitution]
Let $w\in\mathcal W_n$ and $r\in(0,1)^n$, and let $p\neq q$ be active
criteria. Write $S=S_w(r)$ and $\varphi_j=\varphi_j(w)$. Then
\begin{equation}
\operatorname{MRS}_{pq}^{\mathrm{PEJWAK}}
=
\frac{w_p}{w_q}
\frac{
\varphi_p\left(S/r_p\right)^{1-w_p}+B_w(r)
}{
\varphi_q\left(S/r_q\right)^{1-w_q}+B_w(r)
}.
\label{eq:pejwak-mrs}
\end{equation}
This rate is strictly positive and, in general, depends on the current
active-score profile through $r_p$, $r_q$, $S_w(r)$, and $B_w(r)$.
\end{proposition}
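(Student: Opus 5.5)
The plan is to derive the formula directly from the gradient identity in Equation~\eqref{eq:pejwak-gradient} of the profile-coupling proposition, then read off positivity and state dependence. On $(0,1)^n$ both $r_p,r_q>0$ and $S=S_w(r)>0$, so the expanded representation~\eqref{eq:canonical-expanded} is valid. The gradient formula applies to each active criterion, giving
\[
\frac{\partial P_w^{\mathrm{PEJWAK}}}{\partial r_p}
=
\varphi_pw_pr_p^{w_p-1}S^{1-w_p}+w_pB_w(r)
=
w_p\Bigl[\varphi_p\left(S/r_p\right)^{1-w_p}+B_w(r)\Bigr],
\]
using $r_p^{w_p-1}S^{1-w_p}=(S/r_p)^{1-w_p}$. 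The same factorization holds for $q$. Substituting both expressions into the definition~\eqref{eq:mrs-definition} and cancelling nothing beyond the factored $w_p$ and $w_q$ yields Equation~\eqref{eq:pejwak-mrs} immediately.

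For strict positivity, I would check the sign of each factor. Activity gives $w_p,w_q>0$ and, by the support-preservation property of the square-root map, $\varphi_p,\varphi_q>0$. The ratios $S/r_p$ and $S/r_q$ are positive on the open interior. Every summand of $B_w(r)$ in~\eqref{eq:profile-coupling-B} is nonnegative, since $\varphi_\ell\ge0$, $1-w_\ell\ge0$, and $(r_\ell/S)^{w_\ell}>0$. Hence numerator and denominator are strictly positive. This also confirms that the denominator $\partial P/\partial r_q$ is nonzero, as already noted before~\eqref{eq:mrs-definition}.

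The only delicate point is the claim that the rate ``in general'' depends on the profile. I would interpret it as failure of constancy, and prove it by exhibiting variation along a one-parameter path, in the spirit of the nonseparability argument. Fix all coordinates except $r_p=t$ and let $t\to0^+$. Since $q$ is active, $S(t)\to S_0\ge w_qr_q>0$.

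First suppose $w_p<1$, which is automatic because two criteria are active. Then $\varphi_p(S/t)^{1-w_p}\to+\infty$. Meanwhile $B_w(r)$ and the denominator stay bounded and bounded away from zero, because the $q$-term of the denominator tends to a positive limit. So $\operatorname{MRS}_{pq}\to+\infty$, and the rate cannot be constant on $(0,1)^n$.

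The one obstacle is ensuring that $B_w$ stays bounded as $t\to0^+$. This holds because every factor $(r_\ell/S)^{w_\ell}$ is bounded: for $\ell=p$ it is $(t/S)^{w_p}\to0$, and the other factors have fixed numerators over a denominator tending to $S_0>0$. That completes the plan.
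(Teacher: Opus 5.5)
Your derivation of the formula and your positivity argument are the same as the paper's. Both factor each active marginal effect as $w_j\bigl[\varphi_j(S/r_j)^{1-w_j}+B_w(r)\bigr]$, use $w_j,\varphi_j>0$, $r_j,S>0$ and $B_w(r)\ge 0$, and substitute into the MRS definition.

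The two proofs differ only in how they establish state dependence. The paper uses one explicit computation. For the equal-importance binary member at $r=(1/100,49/100)$ it obtains $S=1/4$, $B_w(r)=2/5$ and $\operatorname{MRS}_{12}=203/53$, and the swapped profile gives $53/203$. You instead let $r_p=t\to0^+$ with the other coordinates fixed. The numerator then grows like $\varphi_pS_0^{1-w_p}t^{w_p-1}$, while the denominator converges to a positive finite limit, so $\operatorname{MRS}_{pq}$ is unbounded on $(0,1)^n$. This is the same singularity the paper exploits in its nonseparability proof.

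Your argument is valid. The check that $B_w$ stays bounded, via $S(t)\to S_0\ge w_qr_q>0$ and $(t/S(t))^{w_p}\to0$, is exactly what is needed.

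Your argument is also more general. It proves non-constancy for every $w\in\mathcal W_n$ and every pair of distinct active criteria. Since $\operatorname{MRS}_{pq}=1/\operatorname{MRS}_{qp}$, exchanging the roles of $p$ and $q$ further shows that the rate comes arbitrarily close to $0$. The paper's example certifies non-constancy for one member only. What the example buys is concreteness: exact rational values checkable by hand, and a visible reciprocal behaviour under a coordinate swap.

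One minor point: your ``first suppose $w_p<1$'' needs no companion case, since two active criteria already force $w_p<1$.
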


\begin{proof}
For every active criterion $j$, Equation~\eqref{eq:pejwak-gradient}
can be factored as
\[
\frac{\partial P_w^{\mathrm{PEJWAK}}}{\partial r_j}
=
w_j
\left[
\varphi_j
\left(\frac{S}{r_j}\right)^{1-w_j}
+
B_w(r)
\right].
\]
Because $j$ is active, $w_j>0$ and $\varphi_j>0$. Moreover,
$r_j>0$, $S>0$, and $B_w(r)\geq0$. Hence every displayed marginal
effect is strictly positive. In particular, the derivative with respect
to $r_q$ is nonzero, so the marginal rate of substitution is well
defined. Substitution of the expressions for $j=p$ and $j=q$ into
Equation~\eqref{eq:mrs-definition} gives
Equation~\eqref{eq:pejwak-mrs}.

To verify that the dependence on the score profile is genuine, consider
the equal-importance binary member
\[
w_1=w_2=\varphi_1=\varphi_2=\frac12.
\]
At
\[
r=\left(\frac1{100},\frac{49}{100}\right),
\]
one has $S=1/4$ and $B_w(r)=2/5$, and therefore
\[
\operatorname{MRS}_{12}^{\mathrm{PEJWAK}}
=
\frac{
\frac12\sqrt{(1/4)/(1/100)}+\frac25
}{
\frac12\sqrt{(1/4)/(49/100)}+\frac25
}
=
\frac{203}{53}.
\]
At the coordinate-swapped profile, the corresponding rate is
$53/203$. Thus the marginal substitution rate is not constant over
the positive interior.
\end{proof}

For the same criterion-importance vector
$w\in\mathcal W_n$ and score profile $r\in(0,1)^n$, consider two
distinct active criteria $p$ and $q$. The standard SAW and WP
aggregates satisfy
\[
\frac{\partial P_w^{\mathrm{SAW}}}{\partial r_j}
=
w_j,
\qquad
\frac{\partial P_w^{\mathrm{WP}}}{\partial r_j}
=
\frac{w_j}{r_j}P_w^{\mathrm{WP}}.
\]
Consequently,
\begin{equation}
\operatorname{MRS}_{pq}^{\mathrm{SAW}}
=
\frac{w_p}{w_q},
\qquad
\operatorname{MRS}_{pq}^{\mathrm{WP}}
=
\frac{w_p}{w_q}
\frac{r_q}{r_p}.
\label{eq:benchmark-mrs}
\end{equation}
The three aggregation rules therefore exhibit distinct local
compensation geometries. SAW has a state-independent substitution rate.
The WP rate is state-dependent, but only through the pairwise score
ratio $r_q/r_p$ and is independent of the remaining criterion scores.
By contrast, Equation~\eqref{eq:pejwak-mrs} shows that the canonical
PEJWAK rate is generally profile-coupled: besides $r_p$ and $r_q$, it
depends on the endogenous self-anchor $S_w(r)$ and the aggregate
profile-response term $B_w(r)$.

The profile-wide nature of this dependence can be seen while holding the
focal pair fixed. For the three-criterion canonical member with
\[
w=\left(\frac13,\frac13,\frac13\right),
\]
compare
\[
r=(0.04,0.64,0.04)
\qquad\text{and}\qquad
r'=(0.04,0.64,0.96).
\]
Only the third score changes, yet
$\operatorname{MRS}_{12}^{\mathrm{PEJWAK}}$ changes from approximately
$2.277$ to $2.793$. By contrast,
$\operatorname{MRS}_{12}^{\mathrm{SAW}}=1$ and
$\operatorname{MRS}_{12}^{\mathrm{WP}}=16$ at both profiles. Thus, in
canonical PEJWAK, a criterion outside the focal pair can alter their
local substitution rate by moving the endogenous self-anchor.

The comparison becomes especially sharp on uniform performance
profiles. Write $\varphi_j=\varphi_j(w)$ and define
\[
\overline w_\varphi
=
\sum_{\ell=1}^{n}\varphi_\ell w_\ell.
\]

\begin{corollary}[Diagonal compensation signature]
Let $w\in\mathcal W_n$, and let $p\neq q$ be active criteria. For every
$c\in(0,1)$ and every $j=1,\ldots,n$,
\begin{equation}
\left.
\frac{\partial P_w^{\mathrm{PEJWAK}}}
{\partial r_j}
\right|_{r=c\mathbf 1}
=
w_j
\left(
\varphi_j+1-\overline w_\varphi
\right)
=:
\eta_j^\Delta(w).
\label{eq:diagonal-gradient}
\end{equation}
The diagonal marginal-effect vector
$\eta^\Delta(w)=(\eta_1^\Delta(w),\ldots,\eta_n^\Delta(w))$
belongs to $\mathcal W_n$. Consequently, using the subscript
$\Delta$ to denote restriction to $r=c\mathbf1$,
\begin{equation}
\operatorname{MRS}_{pq,\Delta}^{\mathrm{PEJWAK}}(w)
:=
\left.
\operatorname{MRS}_{pq}^{\mathrm{PEJWAK}}
\right|_{r=c\mathbf 1}
=
\frac{\eta_p^\Delta(w)}{\eta_q^\Delta(w)}
=
\frac{
w_p(\varphi_p+1-\overline w_\varphi)
}{
w_q(\varphi_q+1-\overline w_\varphi)
}.
\label{eq:diagonal-mrs}
\end{equation}
This rate depends on the criterion-importance architecture but not on
the common performance level $c$.

On the same diagonal,
\begin{equation}
\operatorname{MRS}_{pq,\Delta}^{\mathrm{SAW}}(w)
=
\operatorname{MRS}_{pq,\Delta}^{\mathrm{WP}}(w)
=
\frac{w_p}{w_q}.
\label{eq:diagonal-benchmark-mrs}
\end{equation}
Under the canonical square-root contribution rule, the following exact
comparison therefore holds:
\[
\begin{cases}
\operatorname{MRS}_{pq,\Delta}^{\mathrm{PEJWAK}}(w)
>
\operatorname{MRS}_{pq,\Delta}^{\mathrm{SAW}}(w)
=
\operatorname{MRS}_{pq,\Delta}^{\mathrm{WP}}(w)
>1,
& w_p>w_q,\\[1mm]
\operatorname{MRS}_{pq,\Delta}^{\mathrm{PEJWAK}}(w)
=
\operatorname{MRS}_{pq,\Delta}^{\mathrm{SAW}}(w)
=
\operatorname{MRS}_{pq,\Delta}^{\mathrm{WP}}(w)
=1,
& w_p=w_q,\\[1mm]
0<
\operatorname{MRS}_{pq,\Delta}^{\mathrm{PEJWAK}}(w)
<
\operatorname{MRS}_{pq,\Delta}^{\mathrm{SAW}}(w)
=
\operatorname{MRS}_{pq,\Delta}^{\mathrm{WP}}(w)
<1,
& w_p<w_q.
\end{cases}
\]
Thus, on every uniform positive profile, canonical PEJWAK amplifies the
pairwise marginal asymmetry already declared by unequal criterion
importance. This exact diagonal signature distinguishes its local
compensation geometry from both SAW and WP. The result concerns the
uniform-profile diagonal and does not assert a global ordering of
compensability over the complete score domain.
\end{corollary}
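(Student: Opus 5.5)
Evaluate the gradient formula \eqref{eq:pejwak-gradient} at the uniform profile $r=c\mathbf 1$ with $c\in(0,1)$. There $S=\sum_\ell w_\ell c=c$, so $S/r_j=1$ and, by \eqref{eq:profile-coupling-B},
\[
B_w(c\mathbf 1)=\sum_{\ell=1}^n\varphi_\ell(1-w_\ell)=1-\overline w_\varphi ,
\]
using $\sum_\ell\varphi_\ell=1$. For active $j$ the gradient formula then gives $w_j(\varphi_j+1-\overline w_\varphi)$, which is \eqref{eq:diagonal-gradient}. For inactive $j$ both sides vanish. Indeed, $r_j$ enters neither $S$ nor any active term, and its own term has $\varphi_j=0$, so the partial derivative is $0=w_j(\cdots)$.

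Next I show that $\eta^\Delta(w)\in\mathcal W_n$. Nonnegativity holds because $\overline w_\varphi\le\max_\ell w_\ell\le1$. For the sum,
\[
\sum_j\eta_j^\Delta=\overline w_\varphi+(1-\overline w_\varphi)\sum_j w_j=1 .
\]
The diagonal MRS formula \eqref{eq:diagonal-mrs} then follows by substituting into \eqref{eq:mrs-definition}. It is free of $c$ because $\eta^\Delta$ is. For the benchmarks, \eqref{eq:benchmark-mrs} at $r_p=r_q=c$ gives $w_p/w_q$ for both SAW and WP.

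The comparison is the main step. Write $D=1-\overline w_\varphi$; then $D>0$ whenever at least two criteria are active, since then every $w_\ell<1$ and so $\overline w_\varphi<1$. We have
\[
\frac{\mathrm{MRS}^{\mathrm{PEJWAK}}_{pq,\Delta}}{\mathrm{MRS}^{\mathrm{SAW}}_{pq,\Delta}}=\frac{\varphi_p+D}{\varphi_q+D}.
\]
This ratio is $>1$, $=1$, or $<1$ exactly according as $\varphi_p>\varphi_q$, $\varphi_p=\varphi_q$, or $\varphi_p<\varphi_q$. By the order preservation of the square-root map noted after \eqref{eq:ratio-compression}, these cases coincide with $w_p>w_q$, $w_p=w_q$, and $w_p<w_q$. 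Combining this with $w_p/w_q\gtrless1$ and strict positivity yields the three-case display.

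The only delicate point is the bookkeeping for inactive coordinates and the positivity of $D$, which is needed for strictness; the rest is direct substitution. I note that the comparison uses only the order-preservation of $\varphi$, so it is not specific to the square-root rule.
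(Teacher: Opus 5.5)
Your proposal is correct and follows the paper's proof essentially step for step: evaluate the gradient formula at $r=c\mathbf 1$, where $S=c$ and $B_w=1-\overline w_\varphi$; sum the components to get $\eta^\Delta(w)\in\mathcal W_n$; take the ratio; and use the strict monotonicity of the square-root map on the active support to compare $(\varphi_p+1-\overline w_\varphi)/(\varphi_q+1-\overline w_\varphi)$ with one. Your explicit treatment of inactive coordinates is a small welcome addition, and your claim that $1-\overline w_\varphi>0$ is true but not actually needed for strictness, since $\varphi_q>0$ already makes the denominator positive and the common term cancels in the comparison.
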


\begin{proof}
At $r=c\mathbf1$, the endogenous self-anchor satisfies $S_w(r)=c$, and
\[
B_w(c\mathbf1)
=
\sum_{\ell=1}^{n}
\varphi_\ell(1-w_\ell)
=
1-\overline w_\varphi.
\]
Substitution into Equation~\eqref{eq:pejwak-gradient} gives
Equation~\eqref{eq:diagonal-gradient}. Moreover,
\[
\sum_{j=1}^{n}\eta_j^\Delta(w)
=
\sum_{j=1}^{n}w_j\varphi_j
+
\left(1-\overline w_\varphi\right)
\sum_{j=1}^{n}w_j
=
\overline w_\varphi+1-\overline w_\varphi
=
1.
\]
Since $0\leq\overline w_\varphi\leq1$, every component is
nonnegative, while the components corresponding to active criteria are
strictly positive. Hence $\eta^\Delta(w)\in\mathcal W_n$, and taking
the ratio of its $p$th and $q$th components proves
Equation~\eqref{eq:diagonal-mrs}.

Equation~\eqref{eq:benchmark-mrs} gives
$\operatorname{MRS}_{pq}^{\mathrm{SAW}}=w_p/w_q$, while the factor
$r_q/r_p$ in the WP rate equals one on $r=c\mathbf1$. Finally, the
canonical map
\[
w_j\longmapsto
\varphi_j(w)
=
\frac{\sqrt{w_j}}{\sum_k\sqrt{w_k}}
\]
is strictly increasing on the active support. Therefore,
\[
\frac{
\operatorname{MRS}_{pq,\Delta}^{\mathrm{PEJWAK}}(w)
}{
w_p/w_q
}
=
\frac{
\varphi_p+1-\overline w_\varphi
}{
\varphi_q+1-\overline w_\varphi
}
\]
is greater than, equal to, or less than one exactly when $w_p$ is
greater than, equal to, or less than $w_q$, respectively. This proves
the stated comparison.
\end{proof}

The exact retention law in
Equation~\eqref{eq:log-distance-retention} is an interior logarithmic
identity and is therefore undefined at $x=0$. Under the continuous
endpoint convention for the anchoring kernel,
\begin{equation}
K_\alpha(0,a)=0,
\qquad
a\in[0,1],
\qquad
\alpha\in(0,1].
\label{eq:zero-score-kernel}
\end{equation}
The excluded endpoint $\alpha=0$ is structurally different, since
$K_0(0,a)=a$. In the canonical member, however, $\alpha=w_j=0$
corresponds to zero criterion importance and simultaneously implies
$\varphi_j(w)=0$. Its boundary behaviour must therefore be examined at
the level of the complete PEJWAK summand rather than through the kernel
alone.
\subsection{Full-Domain Coherence: Well-Definedness and Joint Continuity}
\label{subsec:boundary-joint-continuity}

The canonical PEJWAK is not confined to the strictly positive interior.
Because normalized decision matrices naturally contain exact zeros and
criterion-importance vectors may lie on the boundary of the simplex, a
usable operator must remain coherent when scores or importance values
vanish. The decisive object is the complete criterion contribution, not
the anchoring kernel considered in isolation. The following theorem
establishes that the linked anchor, retention, and contribution mappings
jointly extend canonical PEJWAK to the complete score-weight domain
without score flooring, positive replacement, or
\(\varepsilon\)-regularization.

Write
\[
J_{+}(w)
=
\{j\in\{1,\ldots,n\}:w_j>0\}
\]
for the active criterion set and define
\[
D(w)
=
\sum_{k=1}^{n}\sqrt{w_k}.
\]
Recalling that
\[
S_i
=
S_w(r_i)
=
\sum_{k=1}^{n}w_kr_{ik},
\]
the complete contribution of criterion \(C_j\) is
\begin{equation*}
T_{ij}(r_i,w)
=
\varphi_j(w)G_{ij}
=
\varphi_j(w)
K_{w_j}\bigl(r_{ij},S_w(r_i)\bigr).
\end{equation*}

\begin{theorem}[Full-domain well-definedness and joint continuity]
\label{thm:boundary-joint-continuity}
For each alternative \(\mathcal A_i\), the canonical PEJWAK evaluation map
\[
(r_i,w)
\longmapsto
P_w^{\mathrm{PEJWAK}}(r_i)
=
\sum_{j=1}^{n}T_{ij}(r_i,w)
\]
is well-defined and jointly continuous on
\[
[0,1]^n\times\mathcal W_n.
\]
In particular, exact zero normalized scores belong to its domain and
require neither a score floor, a positive replacement, nor
\(\varepsilon\)-regularization.
\end{theorem}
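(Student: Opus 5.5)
It suffices to prove that each complete contribution
\[
(r,w)\longmapsto T_j(r,w)=\varphi_j(w)\,K_{w_j}\bigl(r_j,S_w(r)\bigr)
\]
is well-defined and jointly continuous on $[0,1]^n\times\mathcal W_n$, since $P_w^{\mathrm{PEJWAK}}$ is their finite sum.

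\emph{Well-definedness.} This is immediate from the definitions. The bilinear map $S_w(r)=\sum_k w_kr_k$ lies in $[0,1]$ because $r\in[0,1]^n$ and $w\in\mathcal W_n$. The denominator satisfies $D(w)\geq\bigl(\sum_k w_k\bigr)^{1/2}=1>0$, since $\sqrt{w_k}\ge w_k$. Finally, $K_\alpha$ is defined on all of $[0,1]^2$ for every $\alpha\in[0,1]$ by Equation~\eqref{eq:anchoring-kernel}, with the convention $0^0=1$ built into the case split. Continuity of $S$ and of $\varphi_j=\sqrt{w_j}/D(w)$ on the whole simplex is also immediate.

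The real issue is that $(x,a,\alpha)\mapsto K_\alpha(x,a)$ is \emph{not} jointly continuous on $[0,1]^3$. For example, $K_\alpha(0,a)=0$ for $\alpha>0$ but $K_0(0,a)=a$. Likewise $x^\alpha a^{1-\alpha}$ misbehaves as $(x,\alpha)\to(0,0)$, and at $a=0$ with $\alpha\to1$. So continuity cannot be obtained by composition alone, and I would argue pointwise at a fixed $(r^0,w^0)$, splitting into three cases according to $w^0_j$.

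\emph{Case $0<w^0_j<1$.} In a neighbourhood the exponents $w_j$ and $1-w_j$ stay in a compact subinterval of $(0,1)$. On $[0,1]^2\times[\delta,1-\delta]$ the map $(x,a,\alpha)\mapsto x^\alpha a^{1-\alpha}$ is jointly continuous. This holds because $t^\beta$ is jointly continuous on $[0,1]\times[\delta,1]$: at $t=0$ we have $t^\beta\le t^\delta\to0$ uniformly. Composition then gives continuity of $T_j$.

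\emph{Case $w^0_j=1$.} Here $w^0=e_j$, so $S_{w^0}(r^0)=r^0_j$, and the target value is $T_j(r^0,w^0)=r^0_j$. Near this point I would use internality of the kernel, $\min\{x,a\}\le K_\alpha(x,a)\le\max\{x,a\}$, with $x=r_j$ and $a=S_w(r)$. Both arguments converge to $r^0_j$ by continuity of $S$ and of $(r,w)\mapsto r_j$. Combined with $\varphi_j(w)\to1$, this yields $T_j\to r^0_j$.

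\emph{Case $w^0_j=0$.} This is the case I expect to be the main obstacle. It is exactly where the kernel alone is discontinuous, so it must be handled at the level of the complete summand, as foreshadowed after Equation~\eqref{eq:zero-score-kernel}. Here $T_j(r^0,w^0)=0$. The key observation is that $0\le K_{w_j}(r_j,S)\le 1$ always, by internality on $[0,1]^2$. Hence
\[
0\le T_j(r,w)\le \varphi_j(w)=\sqrt{w_j}/D(w)\le\sqrt{w_j}\longrightarrow 0 .
\]
This squeeze settles the case without any analysis of the kernel near its singular corner, which is precisely the role of the linked contribution map. The three cases together give joint continuity of each $T_j$, and hence of the sum. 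The remark about score floors and $\varepsilon$-regularization then follows, because zero scores were never excluded at any step.
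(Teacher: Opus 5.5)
Your proposal is correct and follows the paper's proof essentially step for step. It uses the bound $D(w)\ge 1$ and the same three-way split on $w_j^\ast$: the squeeze $0\le T_{ij}\le\varphi_j(w)\le\sqrt{w_j}$ when $w_j^\ast=0$, a compact exponent range $[\delta,1-\delta]$ when $0<w_j^\ast<1$, and kernel internality with $\varphi_j(w)\to 1$ when $w_j^\ast=1$. The only differences are cosmetic: you get $S_w(r)\to r_j^\ast$ from joint continuity of $S$ rather than from the paper's explicit estimate $|S_w(r)-r_j|\le 1-w_j$, and you justify the joint continuity of $x^\beta a^{1-\beta}$ (via $t^\beta\le t^\delta$), which the paper only asserts.
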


\begin{proof}
Since \(w\in\mathcal W_n\) and
\(\sqrt{w_k}\geq w_k\) for every \(w_k\in[0,1]\),
\[
D(w)
=
\sum_{k=1}^{n}\sqrt{w_k}
\geq
\sum_{k=1}^{n}w_k
=
1.
\]
Hence
\[
\varphi_j(w)
=
\frac{\sqrt{w_j}}{D(w)}
\]
is well-defined and continuous on the entire weight simplex.
The weighted-arithmetic self-anchor \(S_w(r_i)\) is likewise jointly
continuous in \((r_i,w)\).

Fix
\[
(r_i^\ast,w^\ast)
\in
[0,1]^n\times\mathcal W_n
\]
and a criterion \(j\). Three cases exhaust the possible values of
\(w_j^\ast\).

If \(w_j^\ast=0\), kernel internality gives
\[
0
\leq
T_{ij}(r_i,w)
\leq
\varphi_j(w)
=
\frac{\sqrt{w_j}}{D(w)}
\leq
\sqrt{w_j}.
\]
Therefore,
\[
T_{ij}(r_i,w)
\longrightarrow
0
=
T_{ij}(r_i^\ast,w^\ast)
\]
as
\[
(r_i,w)\longrightarrow(r_i^\ast,w^\ast).
\]
This convergence is uniform in the criterion score and the anchor; in
particular, it does not require \(r_{ij}\to0\).

If \(0<w_j^\ast<1\), there exist \(\delta>0\) and a neighbourhood of
\((r_i^\ast,w^\ast)\) on which
\[
\delta
\leq
w_j
\leq
1-\delta.
\]
The map
\[
(x,a,\beta)
\longmapsto
x^\beta a^{1-\beta}
\]
is continuous on
\[
[0,1]^2\times[\delta,1-\delta],
\]
including points at which \(x=0\), \(a=0\), or both. Since
\(\varphi_j(w)\) and \(S_w(r_i)\) are jointly continuous, it follows that
\(T_{ij}(r_i,w)\) is continuous at \((r_i^\ast,w^\ast)\).

Finally, suppose that \(w_j^\ast=1\). Then every other weight converges
to zero and
\[
\begin{aligned}
\left|S_w(r_i)-r_{ij}\right|
&=
\left|
\sum_{k\neq j}w_k(r_{ik}-r_{ij})
\right| \\
&\leq
\sum_{k\neq j}w_k
=
1-w_j
\longrightarrow0.
\end{aligned}
\]
Because \(r_{ij}\to r_{ij}^\ast\), this implies
\[
S_w(r_i)\longrightarrow r_{ij}^\ast.
\]
Kernel internality gives
\[
\min\{r_{ij},S_w(r_i)\}
\leq
K_{w_j}\bigl(r_{ij},S_w(r_i)\bigr)
\leq
\max\{r_{ij},S_w(r_i)\},
\]
and hence
\[
K_{w_j}\bigl(r_{ij},S_w(r_i)\bigr)
\longrightarrow
r_{ij}^\ast.
\]
Moreover,
\[
D(w)\longrightarrow1
\qquad\text{and}\qquad
\varphi_j(w)\longrightarrow1.
\]
Consequently,
\[
T_{ij}(r_i,w)
\longrightarrow
r_{ij}^\ast
=
T_{ij}(r_i^\ast,w^\ast).
\]
Every remaining contribution converges to zero by the first case.

Thus every complete criterion contribution \(T_{ij}\) is jointly
continuous on the full score-weight domain. Since the canonical PEJWAK
score is their finite sum, it is jointly continuous on
\([0,1]^n\times\mathcal W_n\).
\end{proof}

\begin{corollary}[Zero-weight neutrality]
\label{cor:zero-weight-neutrality}
If \(w_j=0\), the canonical PEJWAK score is invariant under every change
in \(r_{ij}\). In particular, a zero-weight criterion affects neither
the self-anchor nor the final aggregate.
\end{corollary}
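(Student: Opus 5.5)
The plan is to trace every place where $r_{ij}$ could enter the canonical score and check that $w_j=0$ removes each of them. Fix $w\in\mathcal W_n$ with $w_j=0$. Take two profiles $r_i,r_i'\in[0,1]^n$ that agree in every coordinate except possibly the $j$th. There are only two entry points for $r_{ij}$ in Equation~\eqref{eq:canonical-pejwak}: the self-anchor $S_w(r_i)$, and the $j$th anchored term $G_{ij}$.

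First, the anchor. Since $S_w(r_i)=\sum_k w_kr_{ik}$ and $w_j=0$, the difference $S_w(r_i)-S_w(r_i')=w_j(r_{ij}-r_{ij}')$ vanishes. So the anchor is identical for the two profiles, which gives the first part of the claim.

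Second, the other summands. For every $\ell\neq j$, the summand $T_{i\ell}=\varphi_\ell(w)K_{w_\ell}(r_{i\ell},S_w(r_i))$ depends only on three things: $w$, $r_{i\ell}$, and the anchor. All three coincide for the two profiles, so $T_{i\ell}=T'_{i\ell}$.

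Third, the $j$th summand. Here $\varphi_j(w)=\sqrt{w_j}/D(w)=0$, and $D(w)\geq 1$ as shown in the proof of Theorem~\ref{thm:boundary-joint-continuity}, so the coefficient is well defined. Moreover $K_0(r_{ij},S_i)=S_i\in[0,1]$ is finite by the boundary definition in Equation~\eqref{eq:anchoring-kernel}. Hence $T_{ij}=0$ for both profiles, regardless of $r_{ij}$, and summing the three steps gives equality of the scores.

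The only delicate point is the boundary convention. One must use the governing definition in Equation~\eqref{eq:canonical-pejwak} with $K_0(x,a)=a$, not the expanded power form. Otherwise an expression like $0^{0}$ or $r_{ij}^{0}$ could look ambiguous at $r_{ij}=0$. Invoking the boundary-extended kernel settles this, and no limiting argument is needed, although it is consistent with the first case of the proof of Theorem~\ref{thm:boundary-joint-continuity}.
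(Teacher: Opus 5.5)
Your proof is correct and follows the paper's argument: $w_j=0$ removes $r_{ij}$ from $S_w(r_i)$ and forces $\varphi_j(w)=0$, so $T_{ij}=0$, while every other contribution depends only on $w$, its own score, and the unchanged anchor. Your explicit appeal to the boundary kernel $K_0(x,a)=a$ and to $D(w)\geq 1$ spells out details the paper's proof leaves implicit.
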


\begin{proof}
When \(w_j=0\), the score \(r_{ij}\) does not enter
\[
S_w(r_i)
=
\sum_{k=1}^{n}w_kr_{ik},
\]
and
\[
\varphi_j(w)=0.
\]
Hence \(T_{ij}=0\), while every remaining contribution is independent
of \(r_{ij}\).
\end{proof}

\begin{corollary}[Characterization of the zero set]
\label{cor:zero-set}
For every
\[
r_i\in[0,1]^n
\qquad\text{and}\qquad
w\in\mathcal W_n,
\]
\begin{equation}
P_w^{\mathrm{PEJWAK}}(r_i)=0
\quad\Longleftrightarrow\quad
r_{ij}=0
\quad
\text{for every }j\in J_{+}(w).
\label{eq:zero-set}
\end{equation}
\end{corollary}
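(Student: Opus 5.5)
The plan is to prove the two directions of Equation~\eqref{eq:zero-set} separately, working throughout with the governing boundary definition in Equation~\eqref{eq:canonical-pejwak}. The starting observation is that the score is a sum of nonnegative terms $T_{ij}=\varphi_j(w)K_{w_j}(r_{ij},S_i)$. Each term is nonnegative because $\varphi_j(w)\geq0$ and the kernel takes values in $[0,1]$. Hence $P_w^{\mathrm{PEJWAK}}(r_i)=0$ holds exactly when every term vanishes. Inactive criteria contribute nothing, since $\varphi_j(w)=0$ whenever $w_j=0$ (Corollary~\ref{cor:zero-weight-neutrality}). The condition therefore reduces to $K_{w_j}(r_{ij},S_i)=0$ for every $j\in J_+(w)$, because $\varphi_j(w)>0$ exactly on the active set.

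For the reverse direction ($\Leftarrow$), suppose $r_{ij}=0$ for every active $j$. Then
\[
S_i=\sum_{k\in J_+(w)}w_kr_{ik}=0 .
\]
For each active $j$ we have $w_j\in(0,1]$, so the boundary convention in Equation~\eqref{eq:zero-score-kernel} gives $K_{w_j}(0,S_i)=0$. The same conclusion also follows from internality of the kernel with $x=a=0$. Every active term is therefore zero, and the score is zero.

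For the forward direction ($\Rightarrow$), assume the score is zero, so $K_{w_j}(r_{ij},S_i)=0$ for every active $j$. I would split into cases according to the value of $w_j$.
\begin{itemize}
\item If $w_j=1$, then $K_1(r_{ij},S_i)=r_{ij}$, so $r_{ij}=0$ immediately.
\item If $0<w_j<1$, then $K_{w_j}(r_{ij},S_i)=r_{ij}^{w_j}S_i^{1-w_j}=0$ forces $r_{ij}=0$ or $S_i=0$.
\end{itemize}
The second case is the one real subtlety: the product can vanish because the anchor vanishes rather than the score. It is resolved by noting that $S_i=0$ is itself a sum of nonnegative terms $w_kr_{ik}$. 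So $S_i=0$ forces $r_{ik}=0$ for every active $k$, which already yields the full conclusion. If instead $S_i>0$, then $r_{ij}=0$ for each active $j$ directly.

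In either case every active score vanishes, which completes the equivalence. Alternatively, the forward direction can be compressed into one step: if some active score $r_{ik}>0$, then $S_i\geq w_kr_{ik}>0$ and $T_{ik}>0$, so the score is positive. This is also the argument already used in the strict-monotonicity corollary.
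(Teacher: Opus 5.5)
Your proof is correct and follows the paper's own argument. The reverse direction is identical: $S_i=0$ kills every active term, and inactive terms carry $\varphi_j(w)=0$. Your compressed forward argument (an active $r_{ik}>0$ gives $S_i\geq w_kr_{ik}>0$ and hence $T_{ik}>0$, while all other terms are nonnegative) is exactly the paper's contrapositive proof; the case split on $w_j=1$ versus $0<w_j<1$ is a valid direct unpacking of the same facts.
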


\begin{proof}
Suppose first that \(r_{ij}=0\) for every \(j\in J_{+}(w)\).
Then
\[
S_w(r_i)
=
\sum_{j\in J_{+}(w)}w_jr_{ij}
=
0.
\]
Every active anchored term is therefore zero. For each inactive
criterion, \(\varphi_j(w)=0\), so its complete contribution is also
zero. Hence
\[
P_w^{\mathrm{PEJWAK}}(r_i)=0.
\]

Conversely, suppose that \(r_{ij}>0\) for at least one active criterion
\(j\). Since \(w_j>0\),
\[
S_w(r_i)
\geq
w_jr_{ij}
>
0.
\]
Moreover, \(\varphi_j(w)>0\), and therefore
\[
T_{ij}
=
\varphi_j(w)
K_{w_j}\bigl(r_{ij},S_w(r_i)\bigr)
>
0.
\]
All remaining contributions are nonnegative, so
\[
P_w^{\mathrm{PEJWAK}}(r_i)>0.
\]
This proves the equivalence.
\end{proof}

The full-domain coherence is a property of the complete jointly weighted
architecture rather than of the kernel in isolation. For a fixed
\(a_0>0\), the map \(K_\alpha(x,a)\) is not jointly continuous at
\((0,a_0,0)\). In canonical PEJWAK, however,
\[
w_j\longrightarrow0
\quad\Longrightarrow\quad
\varphi_j(w)\longrightarrow0,
\qquad
0\leq T_{ij}\leq\varphi_j(w),
\]
so the complete contribution suppresses the kernel-level discontinuity.
This result reveals a substantive advantage of the linked construction:
zero importance removes a criterion coherently from both the self-anchor
and the final score, while a zero value on an active criterion removes
only its own anchored contribution. The complete PEJWAK score vanishes
if and only if every active criterion value is zero. Thus exact zeros are
part of the mathematical domain of the operator, not exceptional inputs
requiring a numerical patch.
\subsection{Contribution-Rule Geometry and Rank Phases}
\label{subsec:escort-geometry}
The fixed square-root contribution rule possesses an exact and globally
auditable rank geometry. To expose that geometry without altering the
canonical operator, the normalized matrix, self-anchors, retention
exponents, and matrix of kernel values \(G=[G_{ij}]\) are held fixed, and only the
contribution transformation is continued along the escort path. This
construction places the canonical point \(q=1/2\) within a complete
analytical continuum from equal participation to concentrated
participation. The path is not an additional decision variable; it is a
mathematical instrument for determining how contribution concentration
changes pairwise score gaps, ranking phases, and the limiting order.

Let
\[
J_{+}(w)=\{j:w_j>0\},
\qquad
Z(q)=\sum_{k\in J_{+}(w)}w_k^q.
\]
Then
\begin{equation}
\varphi_j(q)
=
\begin{cases}
\displaystyle
\frac{w_j^q}{Z(q)}, & j\in J_{+}(w),\\[3mm]
0, & j\notin J_{+}(w),
\end{cases}
\qquad
q\in[0,\infty).
\label{eq:escort-path}
\end{equation}
For any active criteria \(j\) and \(k\),
\begin{equation*}
\frac{\varphi_j(q)}{\varphi_k(q)}
=
\left(\frac{w_j}{w_k}\right)^q.
\end{equation*}
Thus \(q=0\) assigns equal contribution to the active criteria,
\(q=1/2\) recovers the canonical square-root rule, \(q=1\) transfers the
criterion-importance vector directly to the contribution channel, and
\(q>1\) progressively magnifies disparities among its positive components.
Throughout this section, \(q\) is only an ex post audit coordinate; it is
neither elicited from the decision maker nor fitted to obtain a preferred
ranking.

Writing
\[
\ell_j=\log w_j,
\qquad j\in J_{+}(w),
\]
Equation~\eqref{eq:escort-path} becomes
\begin{equation*}
\varphi(q)
=
\operatorname{softmax}(q\ell).
\end{equation*}
The contribution vector therefore follows a one-parameter exponential
path on the active-criterion simplex. Because the kernel values
\[
G_{ij}=K_{w_j}(r_{ij},S_i)
\]
are held fixed, the audited score of alternative \(\mathcal A_i\) is
\begin{equation}
P_i(q)
=
\sum_{j\in J_{+}(w)}\varphi_j(q)G_{ij}.
\label{eq:escort-score}
\end{equation}

\begin{proposition}[Endpoint and concentration geometry]
\label{prop:escort-endpoints}
Let \(w\in\mathcal W_n\). For every \(j\in J_{+}(w)\),
\[
\varphi_j(0)=\frac{1}{|J_{+}(w)|},
\qquad
\varphi_j(1)=w_j.
\]
Define
\[
w_{\max}=\max_{1\leq j\leq n}w_j,
\qquad
J_{\max}(w)=\{j:w_j=w_{\max}\}.
\]
Then, as \(q\to\infty\),
\[
\varphi_j(q)
\longrightarrow
\begin{cases}
|J_{\max}(w)|^{-1}, & j\in J_{\max}(w),\\
0, & j\notin J_{\max}(w),
\end{cases}
\]
and consequently
\begin{equation}
P_i(q)
\longrightarrow
\frac{1}{|J_{\max}(w)|}
\sum_{j\in J_{\max}(w)}G_{ij}.
\label{eq:escort-limit-score}
\end{equation}
If the maximum component of the criterion-importance vector is unique,
the limiting score is the anchoring-kernel output associated with that
criterion.
\end{proposition}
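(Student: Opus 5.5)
The plan is to read everything off the escort formula in Equation~\eqref{eq:escort-path}, treating the two finite endpoints by direct substitution and the limit by normalizing with respect to the largest weight. For \(q=0\), every active criterion has \(w_j^0=1\), so \(Z(0)=|J_{+}(w)|\) and \(\varphi_j(0)=1/|J_{+}(w)|\). For \(q=1\), we have \(Z(1)=\sum_{k\in J_{+}(w)}w_k=1\), because inactive criteria contribute zero to the simplex sum; hence \(\varphi_j(1)=w_j\).

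For the limit, first note that \(w\in\mathcal W_n\) forces \(w_{\max}>0\), so \(J_{\max}(w)\subseteq J_{+}(w)\) and is nonempty. We then divide the numerator and the denominator by \(w_{\max}^q\). This gives, for \(j\in J_{+}(w)\),
\[
\varphi_j(q)=\frac{(w_j/w_{\max})^q}{|J_{\max}(w)|+\sum_{k\in J_{+}(w)\setminus J_{\max}(w)}(w_k/w_{\max})^q}.
\]
Every ratio \(w_k/w_{\max}\) with \(k\notin J_{\max}(w)\) lies in \((0,1)\), so its \(q\)th power tends to zero. The denominator is a finite sum, so it tends to \(|J_{\max}(w)|\). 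The numerator equals \(1\) on \(J_{\max}(w)\) and tends to \(0\) on the remaining active criteria. Inactive criteria have \(\varphi_j(q)=0\) identically. This yields the stated limit of \(\varphi_j(q)\).

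Equation~\eqref{eq:escort-limit-score} then follows because \(P_i(q)\) in Equation~\eqref{eq:escort-score} is a finite linear combination of the fixed kernel values \(G_{ij}\), which do not depend on \(q\). We can therefore pass to the limit term by term. The unique-maximum statement is the special case \(|J_{\max}(w)|=1\), where the limit equals \(G_{ij^\ast}\) for the single maximizing criterion \(j^\ast\).

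The only point needing care is the bookkeeping of supports. The limit must be taken over \(J_{+}(w)\) rather than all indices, so that no expression \(0^q\) or division by zero appears. One must also check that \(J_{\max}(w)\) is nonempty and active. There is no real analytical obstacle, because all limits involve finitely many geometric sequences with ratio less than one.
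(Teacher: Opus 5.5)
Your proposal is correct and matches the paper's proof: the endpoints follow by substitution into the escort formula together with $\sum_j w_j=1$, and the limit follows by dividing numerator and denominator by $w_{\max}^q$ and then passing to the limit term by term in the finite sum defining $P_i(q)$. Your explicit check that $w_{\max}>0$, so that $J_{\max}(w)$ is a nonempty subset of $J_{+}(w)$, is a small piece of bookkeeping that the paper leaves implicit.
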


\begin{proof}
The endpoint identities follow from Equation~\eqref{eq:escort-path} and
\(\sum_{j=1}^{n}w_j=1\). For the limit, divide the numerator and denominator
in Equation~\eqref{eq:escort-path} by \(w_{\max}^{q}\). Ratios associated
with smaller weights converge to zero, whereas those associated with
\(J_{\max}(w)\) remain equal to one. Substitution into
Equation~\eqref{eq:escort-score} gives
Equation~\eqref{eq:escort-limit-score}.
\end{proof}

Differentiating the escort coefficients gives the centered logarithmic
identity
\begin{equation}
\frac{\mathrm d\varphi_j(q)}{\mathrm dq}
=
\varphi_j(q)
\left(
\log w_j
-
\sum_{k\in J_{+}(w)}\varphi_k(q)\log w_k
\right).
\label{eq:escort-phi-derivative}
\end{equation}

\begin{proposition}[Covariance sensitivity identity]
\label{prop:escort-covariance}
For every alternative \(\mathcal A_i\),
\begin{equation}
\frac{\mathrm dP_i(q)}{\mathrm dq}
=
\operatorname{Cov}_{\varphi(q)}
\bigl(G_i,\log w\bigr),
\label{eq:escort-score-derivative}
\end{equation}
where \(G_i=(G_{ij})_{j\in J_{+}(w)}\). For two alternatives
\(\mathcal A_a\) and \(\mathcal A_b\), define
\[
d_{ab}=G_a-G_b,
\qquad
\Delta_{ab}(q)=P_a(q)-P_b(q).
\]
Then
\begin{equation}
\frac{\mathrm d\Delta_{ab}(q)}{\mathrm dq}
=
\operatorname{Cov}_{\varphi(q)}
\bigl(d_{ab},\log w\bigr).
\label{eq:escort-gap-derivative}
\end{equation}
\end{proposition}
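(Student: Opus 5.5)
The plan is to differentiate the audited score in Equation~\eqref{eq:escort-score} term by term, using the centered logarithmic identity in Equation~\eqref{eq:escort-phi-derivative}. The kernel values \(G_{ij}\) are held fixed along the escort path. Since \(J_{+}(w)\) is finite and each \(\varphi_j(q)=w_j^q/Z(q)\) is smooth in \(q\) (with \(Z(q)>0\) because \(J_{+}(w)\neq\emptyset\)), we may write
\[
\frac{\mathrm dP_i(q)}{\mathrm dq}
=\sum_{j\in J_{+}(w)}G_{ij}\,\frac{\mathrm d\varphi_j(q)}{\mathrm dq}.
\]
I would first verify Equation~\eqref{eq:escort-phi-derivative} directly, since it is only asserted in the text. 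The quotient rule gives \(\varphi_j'=\varphi_j\log w_j-\varphi_j Z'(q)/Z(q)\), and \(Z'(q)/Z(q)=\sum_k\varphi_k(q)\log w_k\).

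Next, write \(\mu(q)=\sum_{k\in J_+}\varphi_k(q)\log w_k\), the \(\varphi(q)\)-mean of \(\log w\). Substituting gives
\[
\frac{\mathrm dP_i}{\mathrm dq}=\sum_{j\in J_+}\varphi_j(q)G_{ij}\bigl(\log w_j-\mu(q)\bigr).
\]
The key observation is that \(\varphi(q)\) is a probability vector on \(J_+\). Hence \(\sum_j\varphi_j(q)\bigl(\log w_j-\mu(q)\bigr)=0\), so one may subtract the mean \(\bar G_i=\sum_j\varphi_jG_{ij}=P_i(q)\) from \(G_{ij}\) without changing the sum. This yields \(\sum_j\varphi_j(G_{ij}-\bar G_i)(\log w_j-\mu)\), which is by definition \(\operatorname{Cov}_{\varphi(q)}(G_i,\log w)\). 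That is Equation~\eqref{eq:escort-score-derivative}.

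The gap identity, Equation~\eqref{eq:escort-gap-derivative}, then follows by linearity. Both \(\Delta_{ab}\) and the covariance are linear in the first argument, so \(\Delta_{ab}'=P_a'-P_b'=\operatorname{Cov}_{\varphi(q)}(G_a,\log w)-\operatorname{Cov}_{\varphi(q)}(G_b,\log w)=\operatorname{Cov}_{\varphi(q)}(d_{ab},\log w)\).

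There is no substantial obstacle. The only points needing care are bookkeeping ones. First, restricting to \(J_{+}(w)\) is necessary so that \(\log w_j\) is finite; inactive criteria carry \(\varphi_j\equiv0\) and contribute nothing. Second, the covariance must be stated explicitly as taken under the escort distribution \(\varphi(q)\) on \(J_+\), with the mean-centering justified by \(\sum_{j\in J_+}\varphi_j(q)=1\). At \(q=0\) the derivative should be read as one-sided, but the formulas extend smoothly there.
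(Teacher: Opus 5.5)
Your proof is correct and takes the same approach as the paper: differentiate the audited score, substitute the centered escort derivative, and use $\sum_{j\in J_+}\varphi_j(q)=1$ to recognize the centered sum as $\operatorname{Cov}_{\varphi(q)}$; the gap identity then follows by linearity in the first argument. Your quotient-rule verification of the escort derivative and your explicit mean-centering step simply fill in details that the paper's one-line proof leaves implicit.
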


\begin{proof}
Differentiate Equation~\eqref{eq:escort-score}, substitute
Equation~\eqref{eq:escort-phi-derivative}, and collect the centered terms.
The pairwise identity follows by linearity of covariance in its first
argument.
\end{proof}

Equation~\eqref{eq:escort-score-derivative} has a precise local meaning:
increasing \(q\) raises \(P_i(q)\) when the alternative's kernel-output
profile has positive covariance with the logarithmic importance profile,
and lowers it when that covariance is negative. Pairwise movement is
governed by Equation~\eqref{eq:escort-gap-derivative}. On the
active-criterion simplex
\[
\mathcal S_{J_{+}(w)}
=
\left\{
\psi\in[0,1]^{J_{+}(w)}:
\sum_{j\in J_{+}(w)}\psi_j=1
\right\},
\]
the pairwise indifference set is the simplex section
\begin{equation}
\mathcal H_{ab}
=
\left\{
\psi\in\mathcal S_{J_{+}(w)}:
\langle\psi,d_{ab}\rangle=0
\right\}.
\label{eq:escort-indifference-hyperplane}
\end{equation}
When \(d_{ab}\neq0\), the underlying equality defines a hyperplane; its
intersection with the simplex may nevertheless be empty. Hence
\(P_a(q^*)=P_b(q^*)\) exactly when the escort path meets
\(\mathcal H_{ab}\) at \(q^*\). Such an equality is a transverse crossing,
and therefore a genuine local order reversal, whenever
\[
\Delta_{ab}'(q^*)\neq0.
\]
A zero derivative does not by itself imply a reversal, because the path
may touch the equality set tangentially.

To determine how many pairwise equalities can occur, group the active
weights into their \(s\) distinct levels
\[
\omega_1>\omega_2>\cdots>\omega_s>0,
\qquad
J_h=\{j\in J_{+}(w):w_j=\omega_h\}.
\]
For a fixed pair \((a,b)\), define the grouped gap coefficients
\[
c_h
=
\sum_{j\in J_h}d_{ab,j},
\qquad h=1,\ldots,s.
\]
Then
\begin{equation}
\Delta_{ab}(q)
=
\frac{F_{ab}(q)}{Z(q)},
\qquad
F_{ab}(q)
=
\sum_{h=1}^{s}c_h\omega_h^q.
\label{eq:escort-exponential-polynomial}
\end{equation}
Because \(Z(q)>0\), the gap and the exponential polynomial have the same
sign.

\begin{theorem}[Finite contribution-rule phase structure]
\label{thm:finite-escort-phases}
Consider a finite collection of \(m\) alternatives and a fixed
\(w\in\mathcal W_n\) with \(s\) distinct positive weight levels. For each
pair \((a,b)\), exactly one of two cases holds. If
\(c_1=\cdots=c_s=0\), then \(\Delta_{ab}(q)=0\) for every
\(q\in[0,\infty)\), so the pair is persistently tied along the entire
escort path. Otherwise, \(F_{ab}\) has at most \(s-1\) real zeros,
counted with multiplicity, and therefore at most \(s-1\) equality points
on \([0,\infty)\).

Let \(\mathcal Z\) denote the union of the equality points of all pairs
that are not persistently tied. Then \(\mathcal Z\) is finite and satisfies
\begin{equation}
|\mathcal Z|
\leq
\binom{m}{2}(s-1).
\label{eq:escort-transition-bound}
\end{equation}
On every connected component of
\([0,\infty)\setminus\mathcal Z\), the complete weak ranking induced by
\(P_i(q)\) is constant. Every genuine order reversal occurs at a point of
\(\mathcal Z\), although an isolated equality point need not produce a
reversal.
\end{theorem}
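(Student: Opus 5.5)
The plan rests on the representation \eqref{eq:escort-exponential-polynomial}. Since the kernel matrix $G$ is fixed, I would first check that $\Delta_{ab}(q)=Z(q)^{-1}\sum_{j\in J_+(w)}w_j^q d_{ab,j}$. Grouping the indices by weight level then gives $F_{ab}(q)=\sum_h c_h\omega_h^q$. Because $Z(q)>0$, the zeros and signs of $\Delta_{ab}$ on $[0,\infty)$ coincide with those of $F_{ab}$. The dichotomy then follows directly. If every $c_h$ vanishes, $F_{ab}\equiv0$ and the pair is persistently tied. Otherwise $F_{ab}$ is a nontrivial exponential polynomial $\sum_h c_h e^{\lambda_h q}$ with distinct exponents $\lambda_h=\log\omega_h$, since the levels $\omega_h$ are distinct.

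The key step is the zero count. I would use the classical Pólya--Szegő bound: a nonzero $\sum_{h=1}^{s} c_h e^{\lambda_h q}$ with distinct real $\lambda_h$ has at most $s-1$ real zeros counted with multiplicity. The proof is by induction on the number $s'$ of nonzero coefficients; terms with $c_h=0$ are dropped, which only improves the bound. For $s'=1$ the function $c_h e^{\lambda_h q}$ has no zeros. For the inductive step, multiply by $e^{-\lambda_1 q}$, which does not change the zeros or their multiplicities, so one term becomes the constant $c_1$. Differentiating removes that constant and leaves $\sum_{h\ge2}c_h(\lambda_h-\lambda_1)e^{(\lambda_h-\lambda_1)q}$. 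This has $s'-1$ nonzero coefficients with distinct exponents, so by induction it has at most $s'-2$ zeros counted with multiplicity. By Rolle's theorem, between consecutive distinct zeros of the original function there is a zero of the derivative. In addition, a zero of multiplicity $\mu$ is a zero of the derivative of multiplicity $\mu-1$. Combining these two facts gives at most $s'-1$ zeros for the original function. I expect this multiplicity-aware Rolle bookkeeping to be the main technical obstacle, though it is standard.

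The bound \eqref{eq:escort-transition-bound} then follows. Take the union over the at most $\binom m2$ pairs that are not persistently tied; each contributes at most $s-1$ equality points, so $\mathcal Z$ is finite with the stated cardinality bound.

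For phase constancy, fix a connected component $I$ of $[0,\infty)\setminus\mathcal Z$, which is an interval. For each pair $(a,b)$ there are two cases. If the pair is persistently tied, $\Delta_{ab}\equiv0$ on $I$. Otherwise $\Delta_{ab}$ is continuous and nonvanishing on $I$, so by the intermediate value theorem its sign is constant on $I$. The weak ranking is determined entirely by the signs of all the $\Delta_{ab}$, so it is constant on $I$. A genuine reversal means that the sign of some $\Delta_{ab}$ changes, and by continuity this forces a zero, hence a point of $\mathcal Z$. Finally, a tangential zero such as $F(q)=(\omega_1^{q}-\omega_2^{q}\kappa)^2$-type behaviour, realizable for $s\ge3$, shows that an equality point need not reverse the order. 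Alternatively I would simply note that an even-multiplicity zero preserves sign, which settles the final clause.
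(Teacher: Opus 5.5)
Your proposal is correct and follows essentially the same route as the paper: the sign equivalence $\Delta_{ab}=F_{ab}/Z$, induction on the number of nonzero terms (factor out one exponential, differentiate, apply the multiplicity form of Rolle's theorem), and a union bound over the $\binom{m}{2}$ pairs. Your intermediate-value argument for phase constancy and your even-multiplicity remark for the final clause only spell out what the paper leaves implicit; the paper never shows that a tangential contact is realizable by PEJWAK kernel differences, so your geometric-progression example is optional rather than required.
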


\begin{proof}
If all grouped coefficients vanish,
Equation~\eqref{eq:escort-exponential-polynomial} gives
\(F_{ab}\equiv0\) and hence \(\Delta_{ab}\equiv0\).

Suppose that the grouped coefficient vector is not identically zero and
discard its zero terms. It remains to show that an exponential polynomial
with \(t\) nonzero terms and distinct positive bases has at most \(t-1\)
real zeros, counted with multiplicity. The claim is immediate for \(t=1\).
For \(t\geq2\), order the remaining bases as
\(\widetilde\omega_1>\cdots>\widetilde\omega_t\), factor out the positive
term \(\widetilde\omega_1^q\), and write
\[
f(q)
=
a_1
+
\sum_{h=2}^{t}a_h
\exp\left(
q\log\frac{\widetilde\omega_h}{\widetilde\omega_1}
\right),
\qquad a_h\neq0.
\]
Its derivative is an exponential polynomial with \(t-1\) nonzero terms.
By induction, \(f'\) has at most \(t-2\) real zeros, counted with
multiplicity. The multiplicity form of Rolle's theorem then implies that
\(f\) has at most \(t-1\) real zeros. Multiplication by
\(\widetilde\omega_1^q>0\) does not change the zero set. Since
\(t\leq s\), the required bound is \(s-1\).

There are only \(\binom{m}{2}\) alternative pairs, so the union of all
nonpersistent pairwise equality sets is finite and obeys
Equation~\eqref{eq:escort-transition-bound}. Between consecutive points of
this union, every nonpersistent pairwise gap has constant sign, while every
persistent pair remains tied. The induced complete weak ranking is
therefore constant on each connected component.
\end{proof}

The finite-phase theorem converts the escort continuation from an
exploratory sensitivity curve into an exact rank-geometric audit. It
bounds the number of pairwise equality points, partitions the complete
\(q\)-axis into finitely many constant-ranking phases, distinguishes
transverse reversals from tangential contacts, and determines the
limiting order under concentrated participation. The operational PEJWAK
definition remains fixed at \(q=1/2\); the escort path functions as an
analytical microscope through which the rank consequences of that fixed
design choice become fully visible.
\subsection{Computational Realization and Worked Illustration}
\label{subsec:computational-realization}

The formal architecture admits a deterministic and fully transparent
computational realization. PEJWAK receives a normalized performance
matrix as its numerical input; transforming a raw matrix \(X\) into
\[
R\in[0,1]^{m\times n}
\]
is an application-level preprocessing decision rather than part of the
aggregation operator. Criterion directions, normalization formulas,
reference bounds, and the treatment of constant criteria must therefore
be declared separately. This separation is operationally consequential:
current-set normalization can transmit changes in the candidate set to
the normalized profiles, whereas canonical PEJWAK evaluates every fixed
normalized row through its own endogenous self-anchor.

For a normalized profile
\(r_i=(r_{i1},\ldots,r_{in})\), the complete scalar chain is
\begin{equation}
S_i=\sum_{j=1}^{n}w_jr_{ij},
\qquad
G_{ij}=K_{w_j}(r_{ij},S_i),
\qquad
T_{ij}=\varphi_jG_{ij},
\qquad
P_i^{\mathrm{PEJWAK}}=\sum_{j=1}^{n}T_{ij},
\label{eq:scalar-computational-chain}
\end{equation}
where
\[
w_j=
\frac{\widetilde w_j}{\sum_{k=1}^{n}\widetilde w_k},
\qquad
\varphi_j=
\frac{\sqrt{w_j}}{\sum_{k=1}^{n}\sqrt{w_k}}.
\]
This chain makes all three roles of criterion importance computationally
visible. The vector \(w\) forms the self-anchor, the exponent \(w_j\)
controls the retention of criterion \(j\)'s logarithmic deviation, and
\(\varphi_j\) determines the final participation of the completed kernel
term. Every PEJWAK score is therefore accompanied by an exact
criterion-level decomposition rather than produced as an opaque terminal
index.

Algorithm~\ref{alg:canonical-pejwak-rowwise} implements this construction in
the same componentwise, row-by-row order used in a hand calculation. It first
forms the self-anchor of one alternative and then evaluates, weights, and sums
the criterion-level kernel outputs for that row before proceeding to the next
alternative.

\begin{algorithm}[htbp]
\caption{Componentwise row-by-row evaluation of canonical PEJWAK}
\label{alg:canonical-pejwak-rowwise}
\begin{algorithmic}[1]
\Require normalized performance matrix
\(R=[r_{ij}]\in[0,1]^{m\times n}\); nonnegative criterion-importance input
\(\widetilde w=(\widetilde w_1,\ldots,\widetilde w_n)^\top\) satisfying
\(\sum_{j=1}^{n}\widetilde w_j>0\)
\Ensure score vector \(\mathbf P\), criterion-contribution matrix \(T\), and
descending weak ranking \(\mathcal R\)
\State \(w_j\gets\widetilde w_j/\sum_{k=1}^{n}\widetilde w_k\),
\(j=1,\ldots,n\)
\State \(\varphi_j\gets\sqrt{w_j}/\sum_{k=1}^{n}\sqrt{w_k}\),
\(j=1,\ldots,n\)
\For{\(i=1,\ldots,m\)}
    \State \(S_i\gets\sum_{j=1}^{n}w_jr_{ij}\)
    \State \(P_i^{\mathrm{PEJWAK}}\gets0\)
    \For{\(j=1,\ldots,n\)}
        \State \(G_{ij}\gets K_{w_j}(r_{ij},S_i)\)
        \State \(T_{ij}\gets\varphi_jG_{ij}\)
        \State \(P_i^{\mathrm{PEJWAK}}\gets
        P_i^{\mathrm{PEJWAK}}+T_{ij}\)
    \EndFor
\EndFor
\State \(\mathbf P\gets
(P_1^{\mathrm{PEJWAK}},\ldots,P_m^{\mathrm{PEJWAK}})^\top\)
\State \(\mathcal R\gets\operatorname{WeakRank}_{\downarrow}(\mathbf P)\)
\State \Return \(\mathbf P,T,\mathcal R\)
\end{algorithmic}
\end{algorithm}

The same scalar construction admits an exact matrix--vector realization, which
compresses the loops in Algorithm~\ref{alg:canonical-pejwak-rowwise} into
broadcasted array operations (Algorithm~\ref{alg:canonical-pejwak-matrix}). This is an algebraically equivalent computational
form of canonical PEJWAK, not a different member of the operator family. 
Let
\[
R=[r_{ij}]\in[0,1]^{m\times n},
\qquad
w=(w_1,\ldots,w_n)^\top,
\qquad
\varphi=(\varphi_1,\ldots,\varphi_n)^\top,
\]
let \(\mathbf 1_q\) denote the \(q\)-dimensional vector of ones, and let
\(\mathbf 1_{p\times q}\) denote the \(p\times q\) matrix of ones. All
self-anchors are obtained simultaneously as
\begin{equation*}
\mathbf s=Rw=(S_1,\ldots,S_m)^\top.
\end{equation*}
Replicate the criterion-importance vector and the self-anchor vector across
the full decision matrix by defining
\begin{equation*}
W=\mathbf 1_mw^\top,
\qquad
A=\mathbf s\mathbf 1_n^\top.
\end{equation*}
Thus, \(W_{ij}=w_j\) and \(A_{ij}=S_i\). Define the entrywise kernel map by
\[
\left[\mathcal K_W(R,A)\right]_{ij}
=K_{W_{ij}}(R_{ij},A_{ij}).
\]
The complete matrix of criterion-level kernel outputs is therefore
\begin{equation}
G=\mathcal K_W(R,A).
\label{eq:matrix-kernel-map}
\end{equation}
On the strictly positive interior, this expression becomes
\begin{equation*}
G
=
R^{\circ W}
\circ
A^{\circ(\mathbf 1_{m\times n}-W)},
\end{equation*}
where \(\circ\) denotes the Hadamard product and
\(X^{\circ Y}\) denotes entrywise exponentiation, so that
\([X^{\circ Y}]_{ij}=x_{ij}^{y_{ij}}\); the same notation is used
componentwise for vectors. Equation~\eqref{eq:matrix-kernel-map},
rather than the positive-interior power expression, remains the governing form
on the boundary. The conventions established in
Subsection~\ref{subsec:boundary-joint-continuity} are applied entrywise, without
introducing an arbitrary \(\varepsilon\)-perturbation.

The matrix of complete criterion contributions and the full score vector are
\begin{equation}
T=G\operatorname{Diag}(\varphi),
\qquad
\boxed{
\mathbf P
=T\mathbf 1_n
=G\varphi
},
\qquad
\mathbf P
=
\left(
P_1^{\mathrm{PEJWAK}},\ldots,P_m^{\mathrm{PEJWAK}}
\right)^\top.
\label{eq:vectorized-pejwak}
\end{equation}
Although the last operation is a matrix--vector product, the mapping from the
original performance matrix \(R\) to \(\mathbf P\) is generally nonlinear.
The matrix of kernel values \(G\) depends on \(R\) both directly through each criterion
score and indirectly through the endogenous anchor vector \(\mathbf s=Rw\).

This dependence yields a precise computational separation. Once \(R\), \(w\),
and \(\varphi(w)\) have been fixed, the score of alternative
\(\mathcal A_i\) depends only on its own normalized profile:
\[
P_i^{\mathrm{PEJWAK}}=P_w^{\mathrm{PEJWAK}}(r_i),
\qquad
S_i=S_w(r_i).
\]
Evaluating one alternative therefore requires neither the anchor nor the score
of any other alternative. This row-level independence is not additive
separability across criteria. All criteria within a row remain coupled through
their shared endogenous self-anchor, even though distinct rows can be evaluated
independently after preprocessing.

\begin{algorithm}[htbp]
\caption{Equivalent matrix--vector evaluation of canonical PEJWAK}
\label{alg:canonical-pejwak-matrix}
\begin{algorithmic}[1]
\Require normalized performance matrix
\(R=[r_{ij}]\in[0,1]^{m\times n}\); nonnegative criterion-importance input
\(\widetilde w=(\widetilde w_1,\ldots,\widetilde w_n)^\top\) satisfying
\(\mathbf 1_n^\top\widetilde w>0\)
\Ensure score vector \(\mathbf P\), criterion-contribution matrix \(T\), and
descending weak ranking \(\mathcal R\)
\State \(w\gets\widetilde w/(\mathbf 1_n^\top\widetilde w)\)
\State \(\varphi\gets w^{\circ 1/2}/(\mathbf 1_n^\top w^{\circ 1/2})\)
\State \(\mathbf s\gets Rw\)
\State \(W\gets\mathbf 1_mw^\top\)
\State \(A\gets\mathbf s\mathbf 1_n^\top\)
\State \(G\gets\mathcal K_W(R,A)\)
\State \(T\gets G\operatorname{Diag}(\varphi)\)
\State \(\mathbf P\gets T\mathbf 1_n=G\varphi\)
\State \(\mathcal R\gets\operatorname{WeakRank}_{\downarrow}(\mathbf P)\)
\State \Return \(\mathbf P,T,\mathcal R\)
\end{algorithmic}
\end{algorithm}

Algorithms~\ref{alg:canonical-pejwak-rowwise} and
\ref{alg:canonical-pejwak-matrix} are exactly equivalent in exact arithmetic.
The first exposes the alternative-wise and criterion-wise construction, whereas
the second evaluates the same self-anchors, kernel outputs, contributions, and
scores through array operations. Their distinction concerns evaluation order
and memory layout, not the mathematical method or the resulting ranking. In
both algorithms, every kernel call applies the full-domain conventions
directly; hence no numerical patch is required for zero scores or zero
criterion importance. Ties are defined mathematically by equality of final
scores. If floating-point software replaces exact equality with a numerical
tolerance, that tolerance is an implementation and reporting convention and
must be declared separately. When numerical ranks are required, alternatives
with equal scores receive their average rank. The worked illustration below
follows Algorithm~\ref{alg:canonical-pejwak-rowwise} to make every intermediate
quantity visible, whereas the supplementary implementation and the bulk
PEJWAK calculations reported in this study use
Algorithm~\ref{alg:canonical-pejwak-matrix}. The criterion-contribution matrix
\(T\) is retained when a criterion-level decomposition is required and may be
omitted when only the final scores are needed.

\begin{proposition}[Computational complexity]
Under the standard unit-cost real-arithmetic model, with each kernel evaluation
treated as a constant-time operation, canonical PEJWAK score evaluation for
\(m\) alternatives and \(n\) criteria requires \(O(mn)\) time. Constructing a
descending weak order of the \(m\) scores by comparison-based sorting requires
\(O(m\log m)\) time. The complete scoring and ranking procedure therefore has
time complexity
\begin{equation}
O(mn+m\log m).
\label{eq:total-complexity}
\end{equation}
A fully vectorized implementation that materializes \(G\), and optionally
\(T\), requires \(O(mn)\) working memory. If the stored normalized matrix is
processed row by row, the additional requirement falls to \(O(n)\) working
memory plus \(O(m)\) storage for the scores and ranks, giving \(O(m+n)\)
additional memory.
\end{proposition}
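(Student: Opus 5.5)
The argument is an operation count that follows Algorithm~\ref{alg:canonical-pejwak-rowwise} step by step. After that, I account separately for sorting and for memory.

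For scoring, normalizing $\widetilde w$ costs $n-1$ additions, one reciprocal or division, and $n$ divisions. Forming $\varphi$ costs $n$ square roots, $n-1$ additions and $n$ divisions. Together this preprocessing is $O(n)$ and is done once. Within the outer loop, each alternative needs its self-anchor $S_i=\sum_j w_jr_{ij}$, which takes $n$ multiplications and $n-1$ additions. The inner loop then performs, for each $j$, one kernel call $K_{w_j}(r_{ij},S_i)$, one multiplication by $\varphi_j$ and one addition. Each kernel call is a case test on $w_j\in\{0,1\}$ followed by at most two powers and one product, and the hypothesis makes it constant time. The boundary conventions of Equation~\eqref{eq:anchoring-kernel}, justified by Theorem~\ref{thm:boundary-joint-continuity}, add only a constant number of comparisons. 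Each row therefore costs $O(n)$, the $m$ rows cost $O(mn)$, and with preprocessing the total is $O(n+mn)=O(mn)$. Algorithm~\ref{alg:canonical-pejwak-matrix} performs the same arithmetic operations in a different order: $Rw$ is $O(mn)$, the entrywise map on $G$ is $O(mn)$, and $G\varphi$ is $O(mn)$. The same bound holds for it.

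For ranking, a comparison sort such as merge sort orders the $m$ scores in $O(m\log m)$ comparisons. Turning the sorted list into a weak order, by grouping equal scores and assigning average ranks to tied blocks, takes one linear pass of $O(m)$. Adding the two phases gives Equation~\eqref{eq:total-complexity}.

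For memory, the vectorized form stores $G$, and optionally $T$, as $m\times n$ arrays, together with $\mathbf s$, $w$ and $\varphi$. This is $O(mn+m+n)=O(mn)$. The broadcast matrices $W$ and $A$ also fit within $O(mn)$, or can be left implicit. In the row-wise form only $w$, $\varphi$, the current scalars $S_i$ and the running sum, and at most one current row of $G$ or $T$ are needed, which is $O(n)$. Storing the $m$ scores and their ranks adds $O(m)$, giving $O(m+n)$ additional memory, with the input matrix $R$ excluded. If sorting is done in place or with merge-sort buffers, it uses at most $O(m)$ auxiliary space, so this bound still holds.

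No step is genuinely hard. The main point of care is the model: the kernel, including its boundary case distinctions and the real powers, must count as $O(1)$, and the sorting space must be checked to be $O(m)$ so that the $O(m+n)$ memory claim is not exceeded.
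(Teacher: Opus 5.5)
Your proposal is correct and follows essentially the same route as the paper's proof. Both arguments use $O(n)$ setup for $w$ and $\varphi$, $O(n)$ work per row for the self-anchor and the $n$ kernel terms, and $O(m\log m)$ comparison sorting, and both count memory as a constant number of $m\times n$ arrays versus one row plus the $O(m)$ score and rank vectors; your extra checks (the linear average-rank pass for ties and the $O(m)$ auxiliary space of the sort) only make explicit what the paper leaves implicit.
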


\begin{proof}
Normalizing criterion importance and constructing \(\varphi\) cost \(O(n)\).
For each alternative, forming its weighted-arithmetic self-anchor and
evaluating the \(n\) kernel terms require \(O(n)\) operations. Processing all
\(m\) rows therefore costs \(O(mn)\), after which comparison-based sorting
costs \(O(m\log m)\). Explicit storage of a constant number of
\(m\times n\) arrays is \(O(mn)\). In row-wise execution, a contribution vector
and one row of kernel outputs require \(O(n)\) storage, while the score and rank
vectors require \(O(m)\).
\end{proof}

The row-wise architecture also determines how the canonical operator behaves
in large, dynamic, and distributed environments. Array-oriented
implementations may execute the two matrix--vector products and the entrywise
kernel map through vector instructions, multicore numerical libraries,
graphics processing units, or distributed matrix frameworks without altering
the mathematical operator.

\begin{proposition}[Parallel evaluability]
Suppose that \(R\) and \(w\) are fixed and that
\(1\leq N_{\mathrm{proc}}\leq m\) processing units receive a balanced row
partition. The idealized scoring time is
\begin{equation}
O\!\left(
n\left\lceil\frac{m}{N_{\mathrm{proc}}}\right\rceil
\right),
\label{eq:parallel-complexity}
\end{equation}
in addition to the \(O(n)\) initialization required for the normalized
criterion-importance and contribution vectors. The total arithmetic work
remains \(O(mn)\).
\end{proposition}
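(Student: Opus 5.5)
The plan is to split the work into an $O(n)$ initialization phase and a row-parallel scoring phase. The parallel bound in Equation~\eqref{eq:parallel-complexity} then follows from the row-level independence already recorded after Equation~\eqref{eq:vectorized-pejwak}. Throughout I use the same unit-cost real-arithmetic model as in the preceding complexity proposition, and I treat each kernel evaluation as constant time.

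\emph{Initialization.} Normalizing $\widetilde w$ to $w$ costs $O(n)$: one sum and $n$ divisions. Forming $\varphi(w)$ with the square-root map also costs $O(n)$. I will assume that $w$ and $\varphi$ are then available to every processing unit. In the idealized model this means they are either broadcast at no extra cost or computed redundantly by each unit, and in the latter case each unit still spends only $O(n)$ time. This accounts for the additive $O(n)$ term in the statement.

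\emph{Row independence.} With $R$, $w$ and $\varphi(w)$ fixed, the scalar chain in Equation~\eqref{eq:scalar-computational-chain} shows that $P_i^{\mathrm{PEJWAK}}=P_w^{\mathrm{PEJWAK}}(r_i)$ depends only on the row $r_i$. Computing $S_i$ uses $n$ multiplications and $n-1$ additions. Computing $G_{ij}$, $T_{ij}$ and the running sum uses a constant number of operations per criterion. So each row costs at most $cn$ operations for a constant $c$ independent of $i$, $m$ and $N_{\mathrm{proc}}$. No row reads the anchor or score of another row, so rows assigned to different units can be processed with no communication or synchronization beyond the final collection of scores.

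\emph{Balanced partition.} A balanced partition of $\{1,\ldots,m\}$ into $N_{\mathrm{proc}}$ blocks gives each block at most $\lceil m/N_{\mathrm{proc}}\rceil$ rows. Each unit processes its rows sequentially, so its time is at most $cn\lceil m/N_{\mathrm{proc}}\rceil$. Since the units run concurrently, the idealized scoring time is the maximum over units, which is $O\bigl(n\lceil m/N_{\mathrm{proc}}\rceil\bigr)$. The total work is the sum over units, namely $\sum_i cn=O(mn)$, matching the sequential count in the preceding complexity proposition. The only delicate step is making the idealization explicit: communication, broadcast and gather costs are excluded, and the constant $c$ must be uniform across rows. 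Uniformity holds because every kernel call, including the boundary cases $w_j\in\{0,1\}$ and $r_{ij}=0$ handled by the full-domain conventions, is constant time.
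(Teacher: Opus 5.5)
Your proposal is correct and follows the paper's proof. In both, $\varphi(w)$ is computed once, each unit receives a disjoint set of at most $\lceil m/N_{\mathrm{proc}}\rceil$ rows that it scores in $O(n)$ time per row without reading any other row, and summing over units recovers the $O(mn)$ work bound; your version only makes the idealizations more explicit, namely free broadcast or redundant $O(n)$ initialization (whose extra $N_{\mathrm{proc}}\cdot O(n)$ work is still $O(mn)$ because $N_{\mathrm{proc}}\leq m$) and a uniform per-row constant that covers the boundary kernel cases.
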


\begin{proof}
For fixed \(w\), the vector \(\varphi(w)\) is common to all alternatives and is
computed once. Each processing unit receives a disjoint subset of rows. For
every assigned row \(r_i\), it computes \(S_i\), the \(n\) anchored terms, and
\(P_i\) without accessing any other row. A balanced partition assigns at most
\(\lceil m/N_{\mathrm{proc}}\rceil\) rows to each unit, and every row requires
\(O(n)\) work. Summation over all units preserves the sequential work bound
\(O(mn)\).
\end{proof}

Equation~\eqref{eq:parallel-complexity} is an idealized scoring bound, not a
claim of perfect linear speedup. Communication, memory transfer, scheduling,
hardware occupancy, and load imbalance may reduce realized acceleration. The
absence of cross-row dependencies nevertheless makes the aggregation stage
naturally data parallel. Ranking remains a separate global operation performed
after the scores have been collected and may itself use a parallel or
distributed sorting procedure when \(m\) is very large.

The same locality permits memory-efficient execution when the complete matrix
\(G\) cannot be stored. Processing blocks of \(b\) rows requires
\(O(bn+n)\) working memory in addition to any retained score vector, while the
total scoring work remains \(O(mn)\). In an out-of-core or streaming
implementation, a block can be read, scored, and released before the next block
is loaded. If only the best \(k\) alternatives are required, a size-\(k\)
priority queue reduces retained output storage to \(O(k)\) and maintains the
current leading set in \(O(m\log k)\) total queue-update time.

The architecture also supports exact local updates when the normalization
bounds and \(w\) remain fixed. If an entire profile changes from \(r_i\) to
\(r_i'\), only that row's self-anchor, anchored terms, and score must be
recomputed, at a cost of \(O(n)\). If only entry \(r_{ij}\) changes, the
self-anchor first admits the constant-time update
\begin{equation*}
S_i'
=
S_i+w_j\left(r_{ij}'-r_{ij}\right).
\end{equation*}
Because the revised anchor enters every criterion-level kernel in the same row,
all \(n\) anchored terms must generally be refreshed; the complete row update
therefore remains \(O(n)\).

A newly arriving alternative can similarly be scored in \(O(n)\) time under
fixed external normalization bounds and inserted into a balanced ordered score
structure in \(O(\log m)\) time. Emitting a new explicit rank vector for all
alternatives still requires \(O(m)\) output time. If normalization instead uses
sample minima and maxima from the current candidate set, adding, deleting, or
updating an alternative may alter an entire criterion column and require global
renormalization and rescoring. This dependence originates in preprocessing,
not in the self-anchored aggregation rule, and is examined directly in
Subsection~\ref{subsec:rank-reversal}.

A compact calculation now follows the componentwise schedule of
Algorithm~\ref{alg:canonical-pejwak-rowwise} to make the complete mechanism
explicit. Consider
\begin{equation*}
R=
\begin{bmatrix}
0.80&0.60&0.40\\
0.70&0.90&0.50\\
0.60&0.50&0.90
\end{bmatrix},
\qquad
w=(0.50,0.30,0.20).
\end{equation*}
The canonical square-root transformation gives
\begin{equation*}
\varphi
=
(0.415446,\;0.321803,\;0.262751),
\end{equation*}
where the displayed entries are rounded to six decimal places. The
weighted-arithmetic self-anchors are
\begin{equation*}
(S_1,S_2,S_3)
=
(0.660000,\;0.720000,\;0.630000).
\end{equation*}

For alternative \(\mathcal A_1\),
\[
S_1
=
0.50(0.80)+0.30(0.60)+0.20(0.40)
=
0.66.
\]
Its criterion-level anchored terms are
\begin{align*}
G_{11}
&=
K_{0.50}(0.80,0.66)
=
0.80^{0.50}0.66^{0.50}
=
0.726636,
\\
G_{12}
&=
K_{0.30}(0.60,0.66)
=
0.60^{0.30}0.66^{0.70}
=
0.641396,
\\
G_{13}
&=
K_{0.20}(0.40,0.66)
=
0.40^{0.20}0.66^{0.80}
=
0.597100.
\end{align*}
The complete criterion contributions are
\[
(T_{11},T_{12},T_{13})
=
(0.301878,\;0.206403,\;0.156889),
\]
and hence
\[
P_1^{\mathrm{PEJWAK}}
=
T_{11}+T_{12}+T_{13}
=
0.665170.
\]
Repeating the same row-by-row computation for \(\mathcal A_2\) and
\(\mathcal A_3\) gives the complete results in
Table~\ref{tab:worked-pejwak-example}.

\begin{table}[htbp]
\caption{Criterion-level anchoring and final PEJWAK scores in the worked
illustration.}
\label{tab:worked-pejwak-example}
\centering
\small
\begin{tabular}{@{}lrrrrrr@{}}
\toprule
Alternative
&
Self-anchor
&
\(G_{i1}\)
&
\(G_{i2}\)
&
\(G_{i3}\)
&
PEJWAK score
&
Rank
\\
\midrule
\(\mathcal A_1\)
&
0.660000
&
0.726636
&
0.641396
&
0.597100
&
0.665170
&
2
\\
\(\mathcal A_2\)
&
0.720000
&
0.709930
&
0.769849
&
0.669360
&
0.718552
&
1
\\
\(\mathcal A_3\)
&
0.630000
&
0.614817
&
0.587800
&
0.676583
&
0.622352
&
3
\\
\bottomrule
\end{tabular}
\end{table}

Equivalently, the matrix--vector realization in
Algorithm~\ref{alg:canonical-pejwak-matrix} collects the three componentwise
results simultaneously as
\begin{equation*}
\mathbf P
=G\varphi
=
(0.665170,\;0.718552,\;0.622352)^\top,
\end{equation*}
where the displayed values are rounded to six decimal places.

The example makes the exact retention law visible. For every criterion,
\[
\log\frac{G_{ij}}{S_i}
=
w_j\log\frac{r_{ij}}{S_i},
\]
so the three criteria retain respectively \(50\%\), \(30\%\), and \(20\%\)
of their logarithmic deviations from the alternative-specific self-anchor.

Alternative \(\mathcal A_2\) obtains the highest score. Its high performance
on the second criterion is moderated from \(0.90\) to \(0.769849\), while its
third-criterion shortfall is drawn from \(0.50\) toward \(S_2=0.72\), yielding
\(G_{23}=0.669360\). Its most important criterion remains close to its original
score because
\[
G_{21}=K_{0.50}(0.70,0.72)=0.709930.
\]

Alternative \(\mathcal A_3\) shows the complementary effect. Its largest raw
performance, \(0.90\), occurs on the least important criterion. Because
\(w_3=0.20\), only one fifth of its logarithmic deviation from
\(S_3=0.63\) is retained, giving
\[
G_{33}=0.676583.
\]
The same low importance also gives this criterion the smallest direct
contribution coefficient, \(\varphi_3=0.262751\). By contrast, the first
criterion has the largest importance but the lower performance \(0.60\), and
its anchored term remains comparatively close to that value:
\[
G_{31}=0.614817.
\]
The example therefore displays all three roles of criterion importance:
formation of the self-anchor, retention within the anchoring kernel, and
participation in the final aggregation.

The resulting weak order is
\begin{equation*}
\boxed{
\mathcal A_2
\succ
\mathcal A_1
\succ
\mathcal A_3
}.
\end{equation*}

The worked illustration demonstrates the operational transparency of
PEJWAK. Every final score can be reconstructed from the
alternative-specific self-anchor, the criterion-level kernel outputs,
the retention exponents, and the contribution coefficients. It also
shows that criterion importance acts through three visible mechanisms
rather than through one terminal multiplier. The resulting order is
therefore accompanied by an explanation of how each criterion was
contextualized and how much it contributed. Section~\ref{sec:numerical-study}
extends this transparent construction to comparative protocols,
continuous sensitivity paths, exact rank transitions, set-dependence
audits, and reproducibility checks. All displayed quantities are
generated by the supplementary implementation.

\section{Numerical Audit of PEJWAK: Comparative Behaviour, Sensitivity,
and Reproducibility}
\label{sec:numerical-study}

The numerical study converts the canonical PEJWAK architecture into a
controlled and fully auditable computational experiment. A deliberately
constructed instance is held fixed so that boundary behaviour,
criterion-level decomposition, ranking transitions, set dependence, and
rank affinity can be attributed to declared interventions rather than to
simultaneous changes in the data. The experiment therefore serves as a
mechanism stress test: it examines not only what ranking PEJWAK produces,
but how that ranking is formed, when it remains stable, where it changes,
and which effects originate inside or outside the aggregation mapping.

Four connected questions organize the analysis. First, how does PEJWAK
process heterogeneous profiles and isolated boundary zeros in comparison
with additive, multiplicative, positional, power-based, hybrid, and
external-reference architectures? Second, what exact stability intervals
and transition thresholds arise when criterion importance or the
contribution geometry changes continuously? Third, through what
mathematical pathway can a change in the alternative set alter the scores
and strict pairwise orders of surviving alternatives? Fourth, how do
directional top-weighted affinity, symmetric full-ranking association,
and deterministic reproducibility clarify the relationship between
PEJWAK and the declared benchmarks?

The numerical design is intentionally diagnostic rather than decorative.
It tests the principal capabilities established theoretically:
localization of boundary zeros, preservation of criterion identity,
complete score decomposition, exact phase identification, separation of
aggregation from preprocessing, and full computational traceability.
The resulting conclusions are exact for the declared instance,
normalization rules, importance paths, and benchmark protocols. Their
purpose is to establish how the architecture operates and to provide a
reproducible foundation for subsequent empirical applications.

\subsection{Experimental Design and Benchmark Architecture}
\label{subsec:data-architecture}

The fixed eight-alternative, five-criterion matrix is constructed as a
controlled stress-test environment rather than as an arbitrary toy
example. It contains heterogeneous profiles, exact best and worst
boundaries, isolated zeros on criteria with different importance levels,
and score configurations capable of exhibiting both stable ranking
phases and exact transitions. The supplier-selection framing supplies a
plausible decision context, while the generic criterion labels preserve
the methodological focus of the experiment. Criteria \(C_1\) and \(C_3\)
are costs, whereas \(C_2\), \(C_4\), and \(C_5\) are benefits. The fixed
criterion-importance vector is
\begin{equation*}
w=(0.30,0.25,0.20,0.15,0.10).
\end{equation*}
Table~\ref{tab:raw-matrix} is the sole raw numerical input to the main
audit.

\begin{table}[htbp]
\centering
\caption{Fixed synthetic decision matrix.}
\label{tab:raw-matrix}
\begin{tabular}{lrrrrr}
\toprule
Alternative
& $C_1$ (cost)
& $C_2$ (benefit)
& $C_3$ (cost)
& $C_4$ (benefit)
& $C_5$ (benefit)\\
\midrule
$A_1$&42&88&94&72&56\\
$A_2$&55&90&50&88&82\\
$A_3$&82&83&32&76&93\\
$A_4$&38&62&91&42&58\\
$A_5$&57&87&54&84&22\\
$A_6$&61&58&45&79&78\\
$A_7$&88&89&48&96&80\\
$A_8$&63&76&70&45&96\\
\bottomrule
\end{tabular}
\end{table}

\phantomsection
\label{subsec:numerical-normalization}

Normalization is declared preprocessing rather than part of the PEJWAK operator. The main experiment uses exact current-set min--max normalization. For each criterion, let
\[
x_j^{\min}=\min_{1\leq i\leq m}x_{ij},
\qquad
x_j^{\max}=\max_{1\leq i\leq m}x_{ij},
\]
and define the index set of nonconstant criteria by
\[
J^\ast
=
\left\{
j\in\{1,\ldots,n\}:
x_j^{\max}>x_j^{\min}
\right\}.
\]
Every criterion outside $J^\ast$ is removed before normalization because
it contains no information for discriminating among the alternatives.
For $j\in J^\ast$, benefit and cost criteria are normalized,
respectively, by
\begin{equation}
r_{ij}=
\begin{cases}
\dfrac{x_{ij}-x_j^{\min}}
{x_j^{\max}-x_j^{\min}},
& C_j \text{ is a benefit criterion},\\[9pt]
\dfrac{x_j^{\max}-x_{ij}}
{x_j^{\max}-x_j^{\min}},
& C_j \text{ is a cost criterion}.
\end{cases}
\label{eq:minmax-normalization}
\end{equation}
Thus, the best and worst observations within the declared alternative
set map exactly to one and zero.

After removal of the constant criteria, the retained criterion-importance values are renormalized as
\begin{equation*}
w_j^\ast
=
\frac{w_j}{\sum_{k\in J^\ast}w_k},
\qquad j\in J^\ast,
\end{equation*}
and every PEJWAK quantity derived from criterion importance is
recomputed from $w^\ast$. Assigning a common value, such as one, to a
constant column is not used: although that column provides no
discrimination, it could alter the endogenous row anchors and thereby
affect the subsequent nonlinear aggregation. If $J^\ast$ is empty, or
if $\sum_{k\in J^\ast}w_k=0$, the instance contains no admissibly
weighted discriminatory criterion and no ranking is produced.

No epsilon replacement is applied to the normalized matrix. In
particular, an exact boundary zero is neither deleted nor replaced by a
small positive number. The same unmodified matrix $R=(r_{ij})$ is
supplied to PEJWAK and every normalized-data benchmark. Consequently, any
zero propagation or tie formation produced by a multiplicative
benchmark is retained as part of that method's mathematical behaviour,
rather than suppressed through method-specific preprocessing. Because
the extrema are estimated from the declared alternative set, this
normalization and the resulting numerical comparisons are
current-set dependent.

This convention is compatible with the epsilon-free boundary extension established in Section~\ref{subsec:boundary-joint-continuity}. The resulting matrix is shown in Table~\ref{tab:normalized-matrix}.

\begin{table}[htbp]
\centering
\caption{Current-set min--max normalized decision matrix.}
\label{tab:normalized-matrix}
\begin{tabular}{lrrrrr}
\toprule
Alternative & $C_1$ & $C_2$ & $C_3$ & $C_4$ & $C_5$\\
\midrule
$A_1$&0.920000&0.937500&0.000000&0.555556&0.459459\\
$A_2$&0.660000&1.000000&0.709677&0.851852&0.810811\\
$A_3$&0.120000&0.781250&1.000000&0.629630&0.959459\\
$A_4$&1.000000&0.125000&0.048387&0.000000&0.486486\\
$A_5$&0.620000&0.906250&0.645161&0.777778&0.000000\\
$A_6$&0.540000&0.000000&0.790323&0.685185&0.756757\\
$A_7$&0.000000&0.968750&0.741935&1.000000&0.783784\\
$A_8$&0.500000&0.562500&0.387097&0.055556&1.000000\\
\bottomrule
\end{tabular}
\end{table}

All subsequent calculations use full-precision normalized values generated
directly from the raw matrix; the six-decimal entries in
Table~\ref{tab:normalized-matrix} are reported only for presentation.

The canonical self-anchor
\[
S_i=\sum_{j=1}^{5}w_jr_{ij}
\]
produces
\[
(S_1,\ldots,S_8)
=
(0.639654,0.798794,0.621703,0.389576,0.658261,0.498518,0.618953,0.476378).
\]
Although $S_i$ is numerically identical to the SAW score under the same weights, its role in PEJWAK is not terminal. It is the endogenous row-profile level against which each criterion score is coupled before final aggregation.

Figure~\ref{fig:normalized-profiles-anchors} juxtaposes the normalized
profiles, including five genuine boundary zeros, with their row anchors. Both
panels are generated from the reproducibility outputs.

\begin{figure}[htbp]
\centering
\begin{minipage}[t]{0.59\textwidth}
\centering
\includegraphics[width=\linewidth]{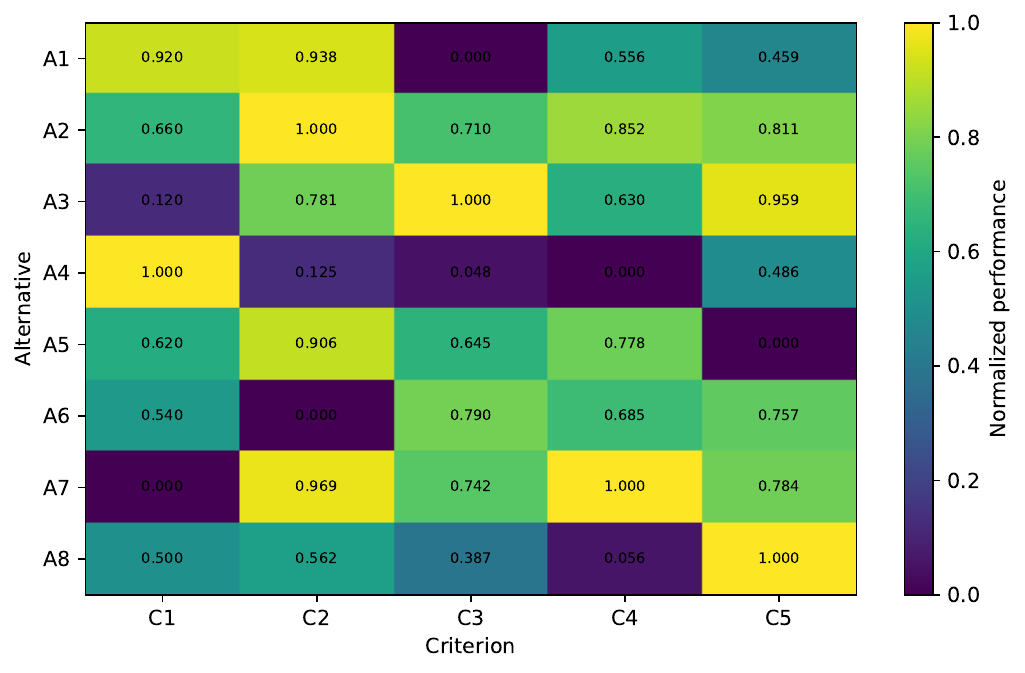}\\[
-1mm]
\small (a) Normalized performance profiles
\end{minipage}\hfill
\begin{minipage}[t]{0.38\textwidth}
\centering
\includegraphics[width=\linewidth]{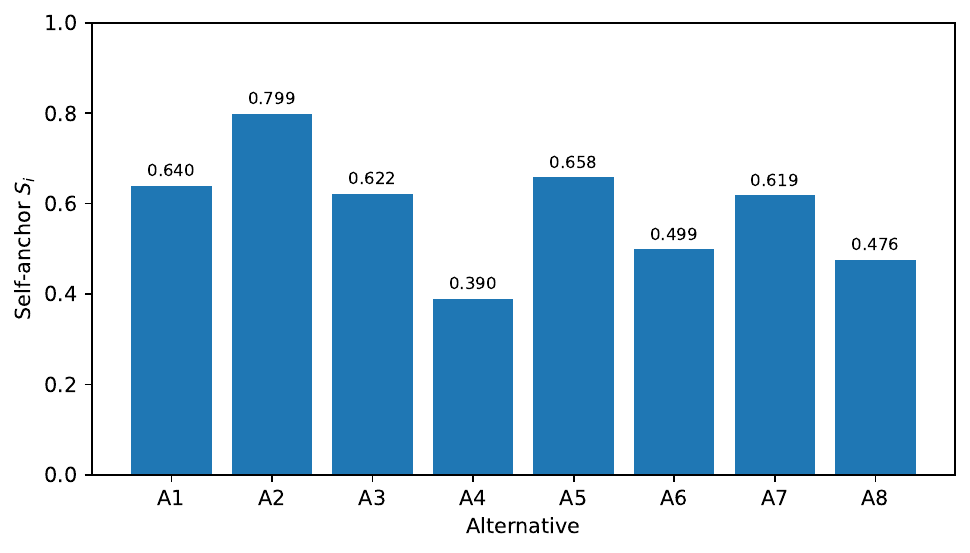}\\[-1mm]
\small (b) Corresponding self-anchors
\end{minipage}
\caption{Normalized profiles and endogenous row anchors.}
\label{fig:normalized-profiles-anchors}
\end{figure}
Because the extrema in Equation~\eqref{eq:minmax-normalization} depend on the current alternative set, the normalized coordinate system is sample-dependent. Section~\ref{subsec:rank-reversal} audits that dependence explicitly.

\phantomsection
\label{subsec:benchmark-protocols}
The benchmark set is selected for diagnostic architectural coverage
rather than numerical abundance. Each comparator isolates a distinct
location or geometry of compensation: SAW provides direct
fixed-criterion addition; WP supplies global multiplicative coupling and
zero propagation; $M_2$ introduces one global power curvature; OWA moves
influence to ordered positions; WASPAS combines completed additive and
multiplicative scores; and MACONT represents an external-reference,
multiple-representation procedure. PEJWAK is evaluated against these
methods because each contrast exposes a different part of its
profile-echoing construction.

Where their definitions permit, the methods receive the same normalized
matrix and criterion-importance vector. MACONT retains its native
raw-data procedure because replacing it with the common normalized input
would alter the method being compared. All method settings are fixed
before the numerical analysis. The comparison is therefore conducted at
the level of explicitly declared protocols: differences among SAW, WP,
WASPAS, $M_2$, OWA, and PEJWAK can be read against a common normalized
input, whereas MACONT is identified explicitly as a complete
procedure-level contrast.

For reproducibility, the MACONT implementation follows Wen et al.~\cite{Wen2020}: its sum, ratio, and min--max normalized matrices are combined with
\[
\lambda_M=\mu_M=\frac13,
\]
and their column mean defines the virtual reference. With
\[
d_{ij}=\widehat{x}_{ij}-\bar{x}_j,
\qquad
L_i=\{j:d_{ij}<0\},
\qquad
H_i=\{j:d_{ij}>0\},
\]
the implementation uses
\[
\rho_i=\sum_jw_jd_{ij},
\qquad
Q_i=
\frac{\prod_{j\in L_i}(-d_{ij})^{w_j}}
{\prod_{j\in H_i}d_{ij}^{w_j}},
\]
\[
S_{1i}
=
\delta\frac{\rho_i}{\|\rho\|_2}
+(1-\delta)\frac{Q_i}{\|Q\|_2},
\]
\[
S_{2i}
=
\vartheta\max_j(w_jd_{ij})
+(1-\vartheta)\min_j(w_jd_{ij}),
\]
and
\begin{equation*}
S_i^{\mathrm{MACONT}}
=
\frac12\left(S_{1i}+\frac{S_{2i}}{\|S_2\|_2}\right),
\qquad
\delta=\vartheta=0.5.
\end{equation*}
Empty products are assigned one and exact zero deviations are omitted from both products, in agreement with the implementation. All intermediate matrices and vectors are included in the supplementary outputs.

Table~\ref{tab:benchmark-architecture} consolidates the input scale, aggregation mechanism, fixed setting, and experimental role assigned to each comparator. Reading the protocols side by side is essential: SAW, WP, WASPAS, $M_2$, OWA, and PEJWAK share the normalized matrix, whereas MACONT retains its native raw-data pipeline, and OWA applies its coefficients to ordered positions rather than fixed criterion identities.

\begin{table}[htbp]
\centering
\footnotesize
\caption{Declared benchmark architectures and their diagnostic roles.}
\label{tab:benchmark-architecture}
\begin{tabularx}{\textwidth}{lXXXX}
\toprule
Method & Numerical input & Aggregation architecture & Fixed setting & Experimental role\\
\midrule
SAW & $R,w$ & Fixed-criterion arithmetic sum & None & Linear-compensation baseline and numerical anchor reference\\
WP & $R,w$ & Fixed-criterion weighted product & None & Multiplicative, zero-propagating contrast\\
WASPAS & $R,w$ & Post-aggregation blend & $\lambda=0.5$ & Hybrid whole-score contrast\\
$M_2$ & $R,w$ & Global power curvature & $p=2$ & Global-exponent contrast\\
OWA & Ordered rows of $R$ and $v$ & Positional arithmetic aggregation & $v=(0.30,0.25,0.20,0.15,0.10)$ & Criterion-identity contrast\\
MACONT & Raw matrix, directions, $w$ & Three normalization channels and a virtual column reference & $\lambda_M=\mu_M=1/3$, $\delta=\vartheta=0.5$ & External-reference multiple-channel contrast\\
PEJWAK
&
\(R,w\)
&
Criterion-level geometric coupling to an endogenous self-anchor
&
Canonical square-root contribution rule
&
Profile-echoing aggregation, localized zero handling, and complete
criterion-level decomposition
\\
\bottomrule
\end{tabularx}
\end{table}

\subsection{Comparative Behaviour and Rank Structure}
\label{subsec:scores-rankings}

Tables~\ref{tab:benchmark-scores} and~\ref{tab:benchmark-rankings} report method-specific scores and descending ranks. Larger values are preferred within each method, and tied scores receive average ranks. Because the score scales are method-specific, numerical values are interpreted within columns; the rankings provide the principal common basis for comparison.

\begin{table}[htbp]
\centering
\scriptsize
\caption{Method-specific scores under declared protocols.}
\label{tab:benchmark-scores}
\resizebox{\textwidth}{!}{%
\begin{tabular}{lrrrrrrr}
\toprule
Alternative & SAW & WP & WASPAS & $M_2$ & OWA & MACONT & PEJWAK\\
\midrule
$A_1$&0.639654&0.000000&0.319827&0.735563&0.691280&0.098591&0.537131\\
$A_2$&0.798794&0.787992&0.793393&0.809937&0.847577&0.580461&0.795951\\
$A_3$&0.621703&0.462399&0.542051&0.713042&0.802559&0.075450&0.586467\\
$A_4$&0.389576&0.000000&0.194788&0.572749&0.453880&-0.010719&0.304980\\
$A_5$&0.658261&0.000000&0.329131&0.703299&0.688352&0.143098&0.575516\\
$A_6$&0.498518&0.000000&0.249259&0.583174&0.644323&-0.333323&0.405291\\
$A_7$&0.618953&0.000000&0.309476&0.745751&0.810235&-0.191963&0.496340\\
$A_8$&0.476378&0.377133&0.426755&0.533417&0.604245&-0.212864&0.460920\\
\bottomrule
\end{tabular}}
\end{table}

\begin{table}[htbp]
\centering
\small
\caption{Comparative rankings under declared protocols.}
\label{tab:benchmark-rankings}
\begin{tabular}{lrrrrrrr}
\toprule
Alternative & SAW & WP & WASPAS & $M_2$ & OWA & MACONT & PEJWAK\\
\midrule
$A_1$&3&6&5&3&4&3&4\\
$A_2$&1&1&1&1&1&1&1\\
$A_3$&4&2&2&4&3&4&2\\
$A_4$&8&6&8&7&8&5&8\\
$A_5$&2&6&4&5&5&2&3\\
$A_6$&6&6&7&6&6&8&7\\
$A_7$&5&6&6&2&2&6&5\\
$A_8$&7&3&3&8&7&7&6\\
\bottomrule
\end{tabular}
\end{table}

The comparative ranking table immediately reveals both agreement and
architectural separation. Alternative \(A_2\) is ranked first by every
declared protocol, reflecting a profile that remains strong without an
exact boundary zero. The more diagnostic result concerns the remaining
alternatives, especially those containing zeros.

WP maps five alternatives to the identical score zero because each
contains a zero on a positively important criterion. Its global product
therefore loses all discrimination among \(A_1\), \(A_4\), \(A_5\),
\(A_6\), and \(A_7\). PEJWAK processes the same observations locally.
For \(r_{ij}=0\) and \(w_j>0\),
\[
G_{ij}=0,
\qquad
T_{ij}=0,
\]
while the remaining criterion contributions continue to operate.
Consequently, the five zero-bearing alternatives remain fully
distinguishable:
\[
A_5\succ A_1\succ A_7\succ A_6\succ A_4.
\]

The contrast between \(A_7\) and \(A_5\) demonstrates why criterion
identity matters. Alternative \(A_7\) has four large values but its zero
occurs on \(C_1\), the most important criterion, so the entire
criterion-\(C_1\) contribution is removed. Alternative \(A_5\) has its
zero on \(C_5\), the least important criterion, and therefore retains a
substantially stronger complete evaluation. This is a structural
advantage over the global weighted product: PEJWAK preserves geometric
sensitivity to an exact weakness while preventing that weakness from
destroying all information supplied by the remaining criteria.

Figure~\ref{fig:pejwak-contributions} converts each aggregate score into
an explanation. Every stacked bar reconstructs
\(P_i=\sum_jT_{ij}\), while the self-anchor marks the profile level
against which the criterion terms were formed. The figure makes visible
which criteria retain, lose, or moderate their contributions and shows
that the final PEJWAK order is completely traceable to criterion-level
operations. The difference between \(P_i\) and \(S_i\) is not treated as
a penalty; its value lies in revealing how anchoring transforms the
profile before final participation.

\begin{figure}[htbp]
\centering
\includegraphics[width=0.90\textwidth]{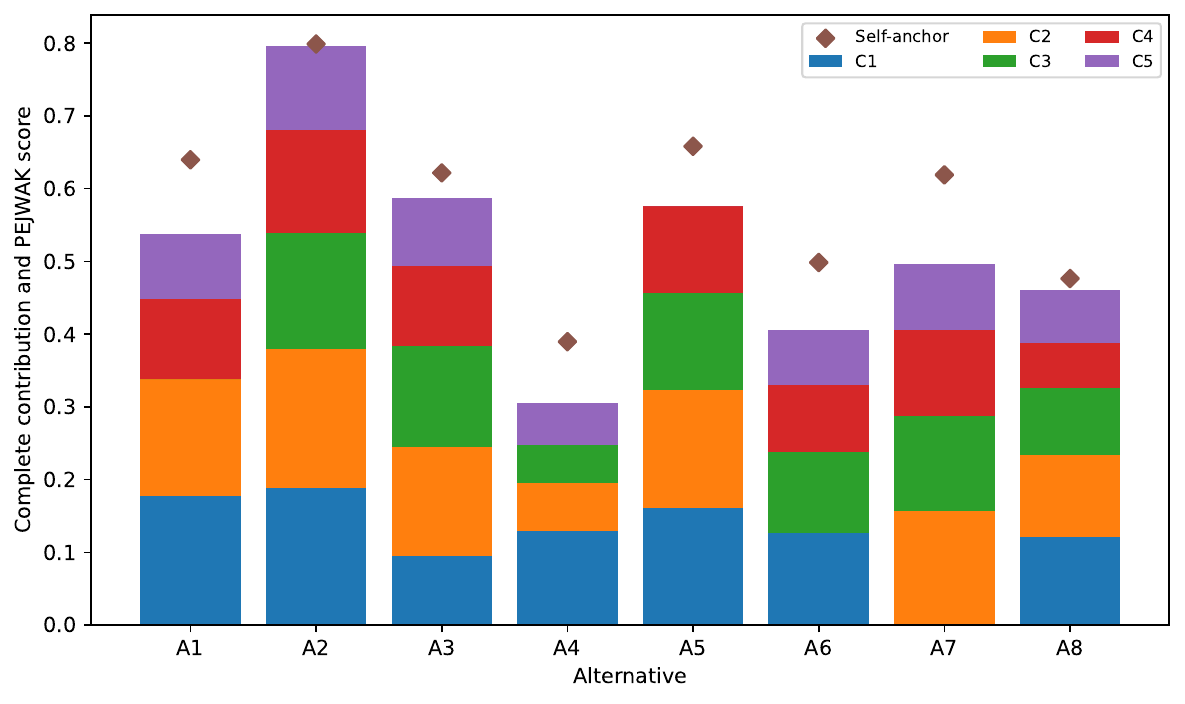}
\caption{Criterion-level decomposition of canonical PEJWAK scores.}
\label{fig:pejwak-contributions}
\end{figure}

\phantomsection
\label{subsec:comparative-rank-structure}

The benchmark methods are categorical alternatives, not points on a continuous scale. A connected-line display would therefore suggest false continuity and obscure the five-way WP tie. Figure~\ref{fig:rank-structure} instead combines a rank matrix with each alternative's observed rank span. The first panel makes every rank explicit; the second summarizes disagreement without assigning metric meaning to the spacing between methods.

\begin{figure}[htbp]
\centering
\begin{minipage}[t]{0.56\textwidth}
\centering
\includegraphics[width=\linewidth]{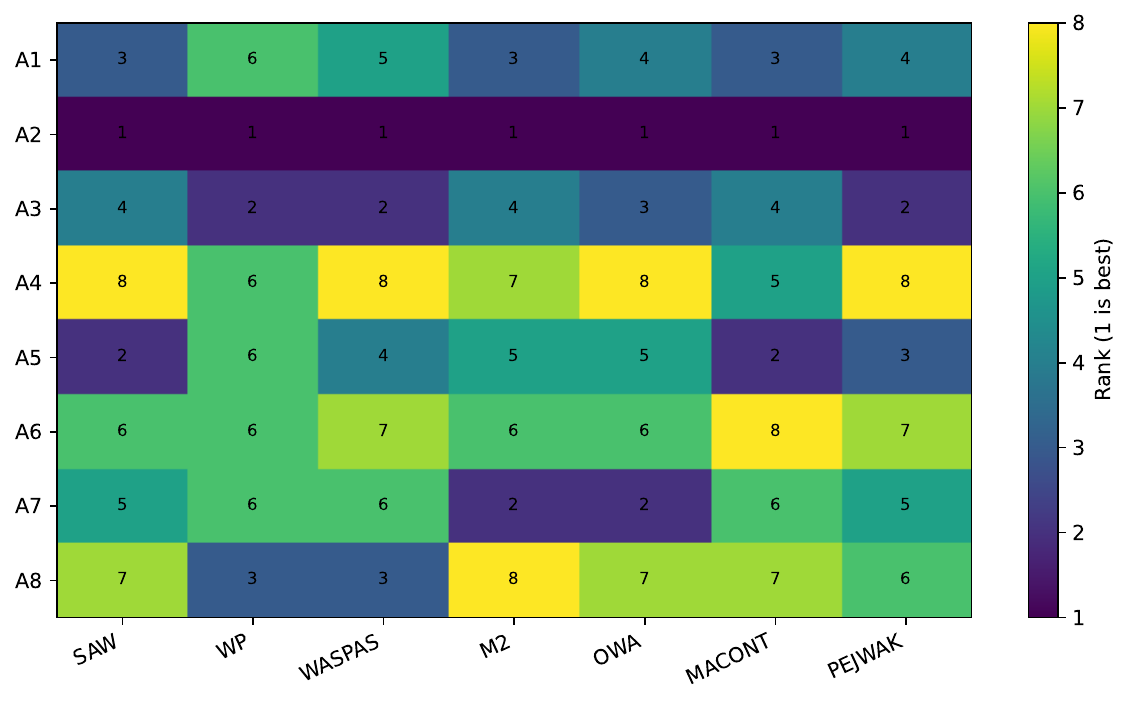}\\[
-1mm]
\small (a) Complete comparative rank matrix
\end{minipage}\hfill
\begin{minipage}[t]{0.41\textwidth}
\centering
\includegraphics[width=\linewidth]{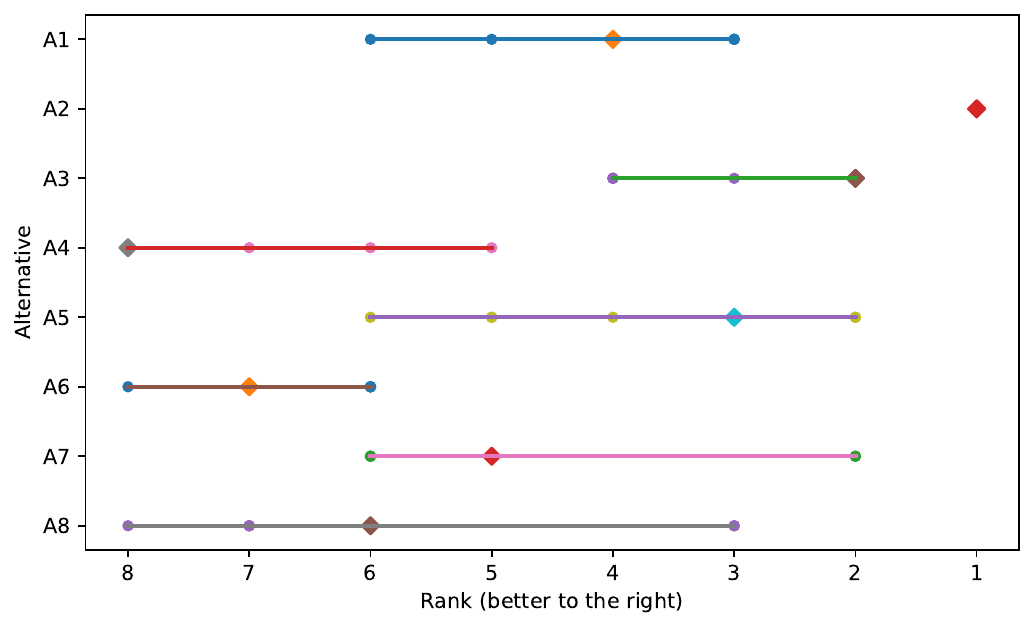}\\[-1mm]
\small (b) Best-to-worst rank span
\end{minipage}
\caption{Comparative rank structure across declared protocols.}
\label{fig:rank-structure}
\end{figure}
The unanimous rank of $A_2$ gives a zero-width span. Alternative $A_8$ has the widest span, from rank three to rank eight, and $A_7$ ranges from rank two to rank six. These ranges show sensitivity to aggregation architecture, not uncertainty about an externally correct position.

\subsection{Exact Sensitivity and Rank-Phase Geometry}
\label{subsec:continuous-weight-sensitivity}
Criterion-importance sensitivity is formulated as an exact
phase-identification problem rather than as a collection of isolated
percentage perturbations. Each audited path varies one importance value
continuously and rescales the remaining values proportionally, preserving
their relative structure and the unit-sum requirement. The analysis then
locates every detected zero of each pairwise score difference and
partitions the path into maximal open intervals with constant complete
rankings. It therefore produces explicit stability certificates and
exact transition boundaries rather than a small set of scenario
snapshots. Let
\[
w^{(0)}=(0.30,0.25,0.20,0.15,0.10).
\]
When criterion $C_k$ is varied, its weight is set to $t$ and the remaining weights are proportionally rescaled:
\begin{equation}
w_k^{(k)}(t)=t,
\qquad
w_j^{(k)}(t)
=
\frac{1-t}{1-w_k^{(0)}}w_j^{(0)},
\quad j\neq k.
\label{eq:proportional-weight-path}
\end{equation}
This construction preserves unit sum and the relative proportions among nonvaried criteria. The audited paths are
\[
w^{(1)}(t)
=
\left(t,\frac{5}{14}(1-t),\frac{2}{7}(1-t),\frac{3}{14}(1-t),\frac{1}{7}(1-t)\right),
\]
\[
w^{(3)}(t)
=
\left(\frac{3}{8}(1-t),\frac{5}{16}(1-t),t,\frac{3}{16}(1-t),\frac{1}{8}(1-t)\right),
\]
with
\[
t\in[0.05,0.60].
\]
The interval is a declared interior design-audit range rather than a probabilistic model of weights.

For each path, define
\[
P_i^{(k)}(t)=P_{w^{(k)}(t)}^{\mathrm{PEJWAK}}(r_i)
\]
and
\[
\Delta_{ab}^{(k)}(t)=P_a^{(k)}(t)-P_b^{(k)}(t).
\]
Joint continuity implies that a strict pairwise order can change only at a root of $\Delta_{ab}^{(k)}(t)=0$. A dense deterministic grid is used to bracket every detected sign-changing intersection, and each bracketed root is refined to absolute tolerance $10^{-12}$. The complete ranking is then evaluated on every open interval between successive identified transitions.

Tables~\ref{tab:w1-transitions} and~\ref{tab:w3-transitions} list every detected crossing on the two audited paths, including the pair whose order changes on each side of the critical value. The $w_1$ path contains fifteen transitions, whereas the $w_3$ path contains ten; these counts describe the declared interval and data rather than a general difference in criterion sensitivity.

\begin{table}[htbp]
\centering
\scriptsize
\caption{PEJWAK transitions along the $w_1$ path.}
\label{tab:w1-transitions}
\begin{tabular}{rll}
\toprule
$w_1^*$ & Order before & Order after\\
\midrule
0.068748 & $A_8\succ A_1$ & $A_1\succ A_8$\\
0.214087 & $A_7\succ A_5$ & $A_5\succ A_7$\\
0.269595 & $A_7\succ A_1$ & $A_1\succ A_7$\\
0.313259 & $A_3\succ A_5$ & $A_5\succ A_3$\\
0.336390 & $A_7\succ A_8$ & $A_8\succ A_7$\\
0.338879 & $A_3\succ A_1$ & $A_1\succ A_3$\\
0.385343 & $A_5\succ A_1$ & $A_1\succ A_5$\\
0.392594 & $A_7\succ A_6$ & $A_6\succ A_7$\\
0.409119 & $A_7\succ A_4$ & $A_4\succ A_7$\\
0.428813 & $A_6\succ A_4$ & $A_4\succ A_6$\\
0.439839 & $A_3\succ A_8$ & $A_8\succ A_3$\\
0.465102 & $A_3\succ A_4$ & $A_4\succ A_3$\\
0.490777 & $A_8\succ A_4$ & $A_4\succ A_8$\\
0.498549 & $A_3\succ A_6$ & $A_6\succ A_3$\\
0.592997 & $A_5\succ A_4$ & $A_4\succ A_5$\\
\bottomrule
\end{tabular}
\end{table}

\begin{table}[htbp]
\centering
\scriptsize
\caption{PEJWAK transitions along the $w_3$ path.}
\label{tab:w3-transitions}
\begin{tabular}{rll}
\toprule
$w_3^*$ & Order before & Order after\\
\midrule
0.066548 & $A_4\succ A_6$ & $A_6\succ A_4$\\
0.070686 & $A_8\succ A_7$ & $A_7\succ A_8$\\
0.159396 & $A_1\succ A_5$ & $A_5\succ A_1$\\
0.165694 & $A_1\succ A_3$ & $A_3\succ A_1$\\
0.178084 & $A_5\succ A_3$ & $A_3\succ A_5$\\
0.238244 & $A_1\succ A_7$ & $A_7\succ A_1$\\
0.299713 & $A_1\succ A_8$ & $A_8\succ A_1$\\
0.306594 & $A_1\succ A_6$ & $A_6\succ A_1$\\
0.316912 & $A_8\succ A_6$ & $A_6\succ A_8$\\
0.506789 & $A_2\succ A_3$ & $A_3\succ A_2$\\
\bottomrule
\end{tabular}
\end{table}

The two paths establish complementary properties of the canonical
mechanism. Around the baseline vector, the complete PEJWAK order possesses
the exact local stability certificates
\begin{equation}
0.269595<w_1<0.313259,
\label{eq:w1-local-interval}
\end{equation}
and
\begin{equation}
0.178084<w_3<0.238244.
\label{eq:w3-local-interval}
\end{equation}
Throughout either interval, all eight alternatives retain the complete
order
\[
A_2\succ A_3\succ A_5\succ A_1\succ A_7\succ A_8\succ A_6\succ A_4.
\]
Beyond these phases, Tables~\ref{tab:w1-transitions}
and~\ref{tab:w3-transitions} identify precisely which pair changes order
and at what criterion-importance value. At $w_3=0.506789$, $A_3$
overtakes $A_2$ and becomes the leading alternative, consistently with
$r_{33}=1$ and $r_{23}=0.709677$. This transition is not driven by one
terminal coefficient alone: changing $w_3$ jointly modifies the
self-anchor, the retention exponents, and the contribution coefficients.
PEJWAK therefore reveals both local ranking stability and the exact
architectural boundaries at which that stability ends.

Figure~\ref{fig:weight-paths} places these tabulated crossings on the continuous score and rank trajectories. Panels (a) and (c) show that the scores vary continuously, whereas panels (b) and (d) show the piecewise-constant rankings and locate the baseline values within their corresponding invariant phases.

\begin{figure}[htbp]
\centering
\begin{minipage}[t]{0.48\textwidth}
\centering
\includegraphics[width=\linewidth]{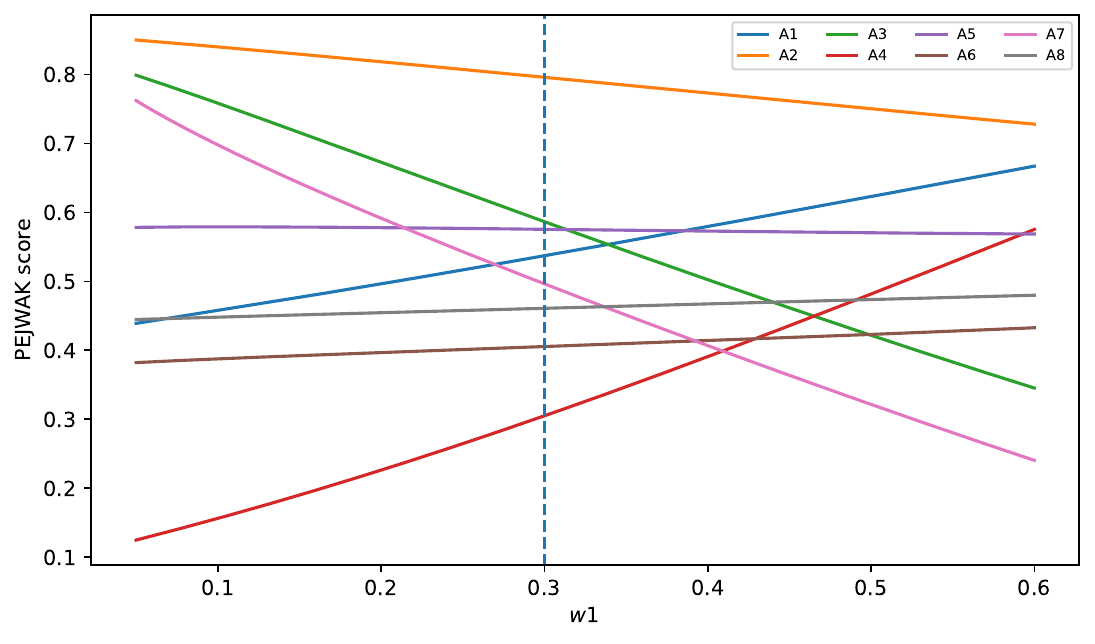}\\[
-1mm]
\small (a) Score paths under $w_1=t$
\end{minipage}\hfill
\begin{minipage}[t]{0.48\textwidth}
\centering
\includegraphics[width=\linewidth]{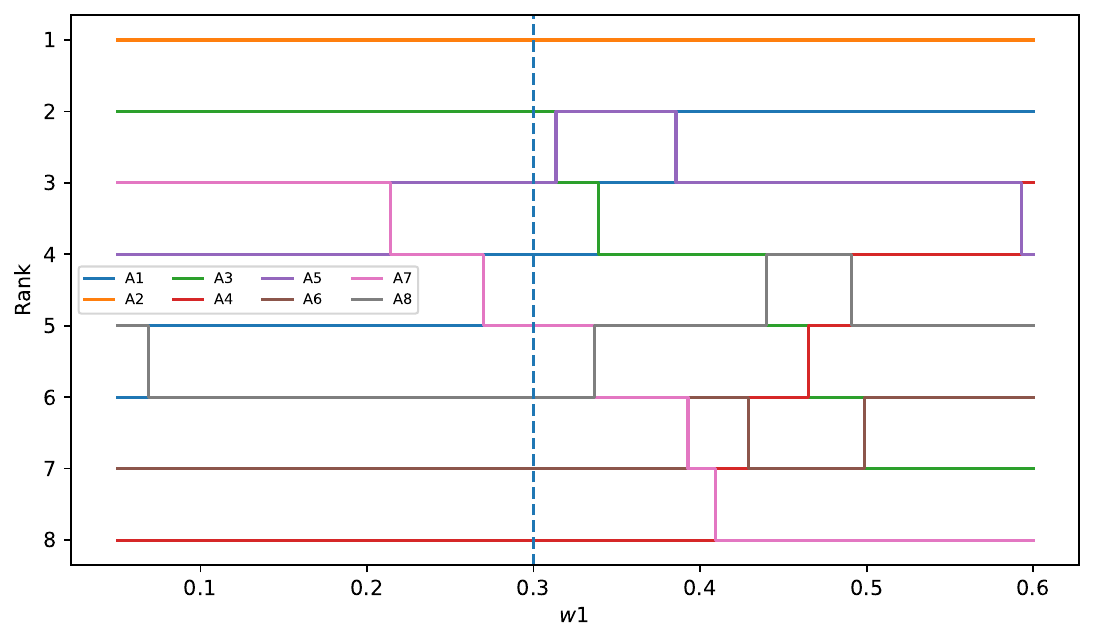}\\[-1mm]
\small (b) Piecewise-constant rank paths under $w_1=t$
\end{minipage}
\vspace{2mm}
\begin{minipage}[t]{0.48\textwidth}
\centering
\includegraphics[width=\linewidth]{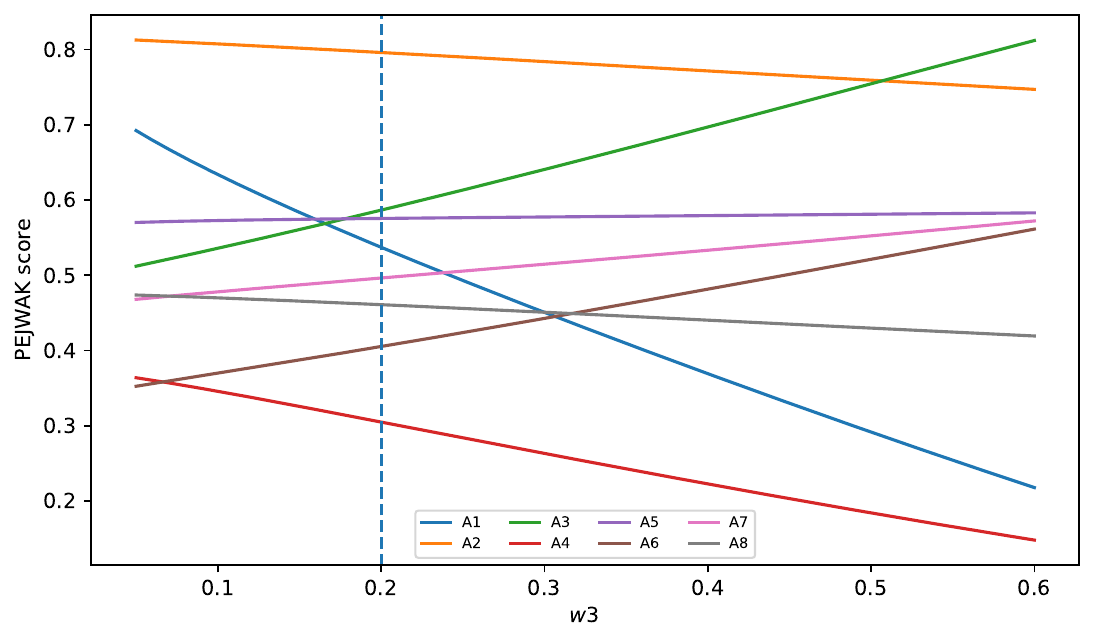}\\[-1mm]
\small (c) Score paths under $w_3=t$
\end{minipage}\hfill
\begin{minipage}[t]{0.48\textwidth}
\centering
\includegraphics[width=\linewidth]{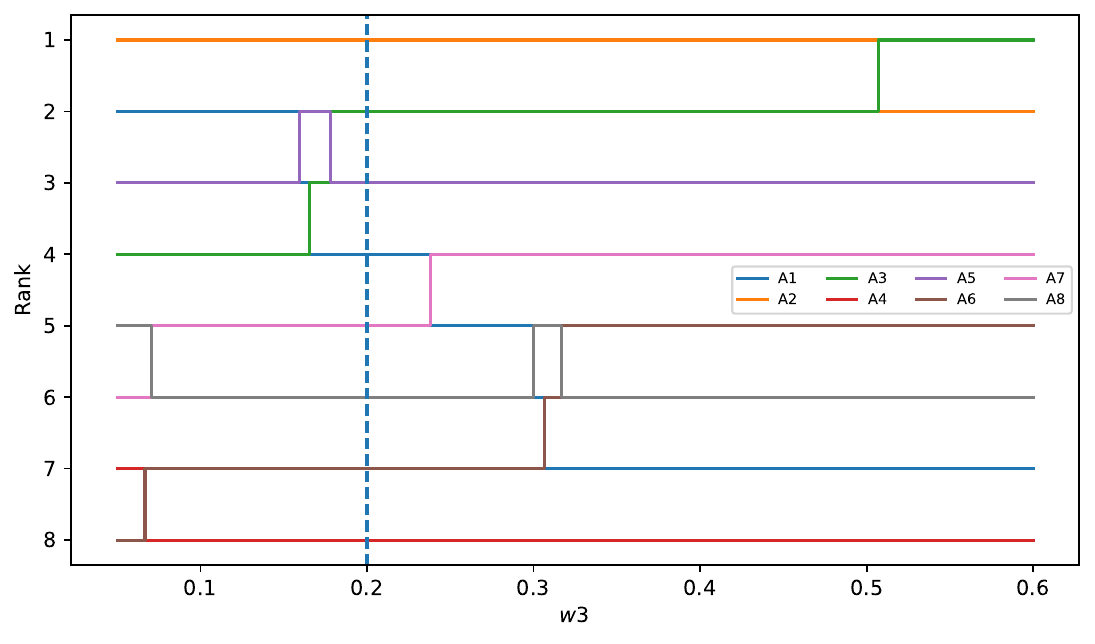}\\[-1mm]
\small (d) Piecewise-constant rank paths under $w_3=t$
\end{minipage}
\caption{Continuous PEJWAK score and rank paths.}
\label{fig:weight-paths}
\end{figure}
The dashed vertical lines mark the baseline weights. The score trajectories are continuous and the rank trajectories are
piecewise constant, changing only at the certified thresholds. The
baseline phases therefore demonstrate that local stability and a richer
global transition geometry coexist within the same architecture.

\phantomsection
\label{subsec:contribution-rule-audit}

A distinct question concerns the contribution transformation rather than
the criterion-importance vector itself. The second sensitivity audit
varies the contribution rule while holding the normalized matrix, the
self-anchors, the retention exponents, and the matrix of kernel values
$G=[G_{ij}]$ fixed. It applies the escort geometry established in
Section~\ref{subsec:escort-geometry},
\[
\varphi(q)=\operatorname{softmax}(q\log w),
\qquad q\in[0,\infty),
\]
to the numerical instance. The canonical point is $q=1/2$;
$0\leq q<1$ compresses differences among positive importance values,
$q=1$ transfers them directly, and $q>1$ provides a concentration
stress test.

For each pair,
\[
\Delta_{ab}(q)
=
\frac{\sum_j w_j^q(G_{aj}-G_{bj})}{\sum_kw_k^q}.
\]
The numerator is the exponential polynomial in Equation~\eqref{eq:escort-exponential-polynomial}. Root isolation uses the derivative structure from Theorem~\ref{thm:finite-escort-phases}, followed by deterministic refinement and a dominance certificate that excludes further roots beyond the final search interval.

Table~\ref{tab:escort-transitions} records the complete sequence of twelve transverse crossings over $q\in[0,\infty)$ and identifies the pairwise order on both sides of each boundary. It therefore supplies the numerical partition underlying the thirteen constant-ranking phases discussed below.

\begin{table}[htbp]
\centering
\scriptsize
\caption{Contribution-rule transitions for $q\in[0,\infty)$.}
\label{tab:escort-transitions}
\begin{tabular}{rlll}
\toprule
$q^*$ & Pair & Order before & Order after\\
\midrule
0.621750 & $A_3/A_5$ & $A_3\succ A_5$ & $A_5\succ A_3$\\
1.013628 & $A_7/A_8$ & $A_7\succ A_8$ & $A_8\succ A_7$\\
1.556367 & $A_1/A_3$ & $A_3\succ A_1$ & $A_1\succ A_3$\\
2.460517 & $A_6/A_7$ & $A_7\succ A_6$ & $A_6\succ A_7$\\
2.542918 & $A_4/A_7$ & $A_7\succ A_4$ & $A_4\succ A_7$\\
2.674756 & $A_4/A_6$ & $A_6\succ A_4$ & $A_4\succ A_6$\\
5.008459 & $A_3/A_8$ & $A_3\succ A_8$ & $A_8\succ A_3$\\
5.746905 & $A_1/A_5$ & $A_5\succ A_1$ & $A_1\succ A_5$\\
6.833078 & $A_3/A_4$ & $A_3\succ A_4$ & $A_4\succ A_3$\\
9.010732 & $A_3/A_6$ & $A_3\succ A_6$ & $A_6\succ A_3$\\
10.426339 & $A_4/A_8$ & $A_8\succ A_4$ & $A_4\succ A_8$\\
15.887199 & $A_6/A_8$ & $A_8\succ A_6$ & $A_6\succ A_8$\\
\bottomrule
\end{tabular}
\end{table}

The first transition is analytically characterized by
\begin{equation*}
q_{35}^*
=
0.6217503447\ldots,
\qquad
P_3(q_{35}^*)=P_5(q_{35}^*)=0.583359816\ldots.
\end{equation*}
With $d_j=G_{3j}-G_{5j}$ and $w_5=0.10$ as the reference weight, the exact crossing equation is
\begin{equation}
d_1 3^q
+d_2\left(\frac52\right)^q
+d_3 2^q
+d_4\left(\frac32\right)^q
+d_5
=0.
\label{eq:q35-crossing}
\end{equation}
The sign pattern is
\[
d_1<0,\quad d_2<0,\quad d_3>0,\quad d_4<0,\quad d_5>0.
\]
Thus $A_3$ has its largest kernel advantage on the least-weighted criterion $C_5$, whereas $A_5$ is stronger on higher-weight criteria, especially $C_1$ and $C_2$. Increasing $q$ progressively shifts relative contribution toward those higher-weight criteria. The logarithmic balance function obtained by separating the positive and negative sides of Equation~\eqref{eq:q35-crossing} is strictly decreasing on $[0,1]$, proving that this crossing is unique in the compression interval. Moreover,
\[
\Delta_{35}'(q_{35}^*)=-0.088039\ldots\neq0,
\]
so the transition is transverse rather than tangential.

All twelve observed crossings are transverse. They partition the half-line into thirteen constant-ranking phases. The canonical value $q=1/2$ belongs to the first phase and preserves the baseline order. Beyond $q=1$, additional transitions reveal the effect of progressively concentrating the contribution coefficients. Because $w_1=0.30$ is the unique maximum weight,
\[
\lim_{q\to\infty}P_i(q)=G_{i1},
\]
and the limiting order is
\begin{equation*}
A_2\succ A_1\succ A_5\succ A_4\succ A_6\succ A_8\succ A_3\succ A_7.
\end{equation*}
This limit remains profile-echoing: $G_{i1}$ is not the raw first-criterion score but the output of the anchoring kernel after criterion $C_1$ has been coupled to the alternative-specific self-anchor.

For display, the half-line is compactified by
$\tau=q/(1+q)\in[0,1)$. The panels trace the contribution coefficients, the
thirteen rank phases, and the first transverse crossing. This path is an
analytical design audit, not a fitted or decision-maker-controlled parameter.

Figure~\ref{fig:escort-phase-geometry} provides the visual counterpart to Table~\ref{tab:escort-transitions}: panel (a) shows the redistribution of contribution coefficients, panel (b) displays all thirteen phases on the compactified half-line, and panel (c) verifies that the first $A_3/A_5$ crossing changes sign rather than merely touching zero.

\begin{figure}[htbp]
\centering
\begin{minipage}[t]{0.48\textwidth}
\centering
\includegraphics[width=\linewidth]{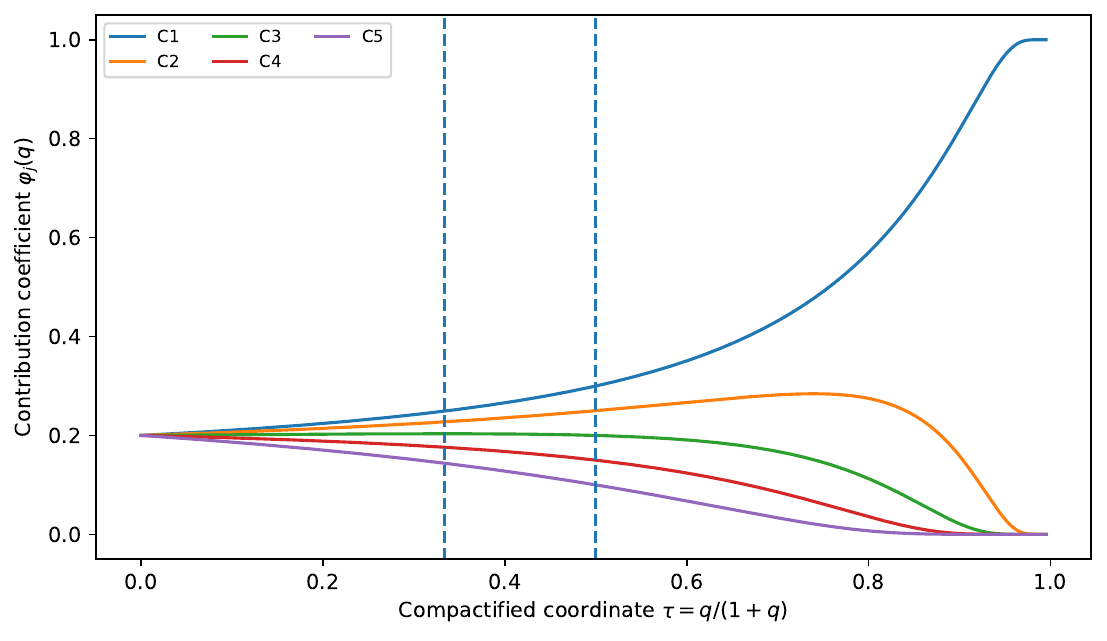}\\[
-1mm]
\small (a) Escort contribution coefficients
\end{minipage}\hfill
\begin{minipage}[t]{0.48\textwidth}
\centering
\includegraphics[width=\linewidth]{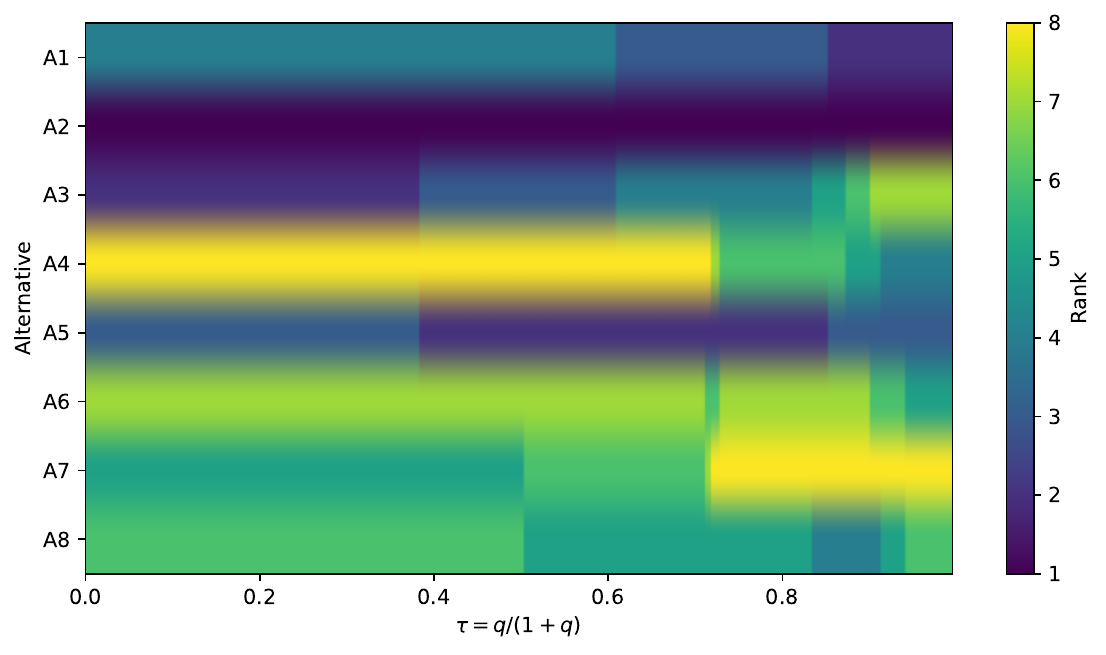}\\[-1mm]
\small (b) Rank phases on the compactified half-line
\end{minipage}
\vspace{2mm}
\begin{minipage}[t]{0.62\textwidth}
\centering
\includegraphics[width=\linewidth]{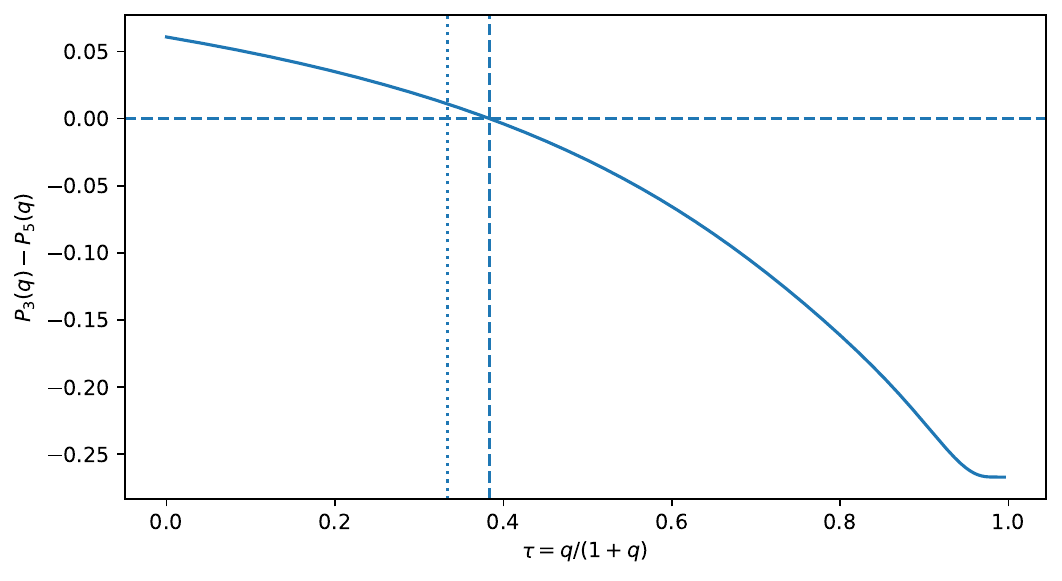}\\[-1mm]
\small (c) Transverse $A_3/A_5$ score-gap crossing
\end{minipage}
\caption{Contribution-rule phase geometry.}
\label{fig:escort-phase-geometry}
\end{figure}
\subsection{Set Dependence, Rank Affinity, and Reproducibility}
\label{subsec:rank-reversal}
The set-dependence audit asks a sharper question than ordinary weight
sensitivity. Criterion importance, criterion directions, and the PEJWAK
scoring rule remain fixed; only the alternative set changes. The audit
then determines whether the strict order of any surviving pair reverses
and identifies the exact computational pathway responsible for the
change. This design separates genuine parameter sensitivity from
dependence transmitted by current-set normalization.

\begin{lemma}[Criterion-wise affine transport under current-set normalization]
\label{lem:normalization-transport}
Let $\mathcal A$ and $\mathcal A'$ denote the baseline and modified
alternative sets, respectively. For criterion $C_j$, let
$(m_j,M_j)$ and $(m'_j,M'_j)$ be the corresponding current-set bounds.
Suppose that $C_j$ remains nonconstant in both sets:
\[
M_j>m_j,
\qquad
M'_j>m'_j.
\]
Then, for every surviving alternative
$A_i\in\mathcal A\cap\mathcal A'$, the min--max normalized values satisfy
\begin{equation}
r'_{ij}=a_jr_{ij}+b_j,
\qquad
a_j=\frac{M_j-m_j}{M'_j-m'_j}>0,
\label{eq:normalization-transport}
\end{equation}
where
\[
b_j=
\begin{cases}
\dfrac{m_j-m'_j}{M'_j-m'_j},
& C_j \text{ is a benefit criterion},\\[9pt]
\dfrac{M'_j-M_j}{M'_j-m'_j},
& C_j \text{ is a cost criterion}.
\end{cases}
\]
Consequently, a change in the alternative set induces a
criterion-specific positive affine transport of every surviving
normalized profile.
\end{lemma}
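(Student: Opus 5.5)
The plan is a direct computation from Equation~\eqref{eq:minmax-normalization}, treating benefit and cost criteria separately and expressing the modified normalized value through the baseline one. For a surviving alternative $A_i$, the raw value $x_{ij}$ is the same in both sets; only the bounds change. Because $C_j$ is assumed nonconstant in both sets, $M_j>m_j$ and $M'_j>m'_j$. So both normalizations are defined, and criterion $C_j$ is retained in $J^\ast$ for both sets.

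For a benefit criterion, I first invert the baseline normalization to get $x_{ij}=m_j+(M_j-m_j)r_{ij}$. Substituting this into $r'_{ij}=(x_{ij}-m'_j)/(M'_j-m'_j)$ gives
\[
r'_{ij}=\frac{M_j-m_j}{M'_j-m'_j}\,r_{ij}+\frac{m_j-m'_j}{M'_j-m'_j},
\]
which is the claimed $a_j$ and the benefit form of $b_j$.

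For a cost criterion, I write $x_{ij}=M_j-(M_j-m_j)r_{ij}$ and substitute into $r'_{ij}=(M'_j-x_{ij})/(M'_j-m'_j)$. This yields
\[
r'_{ij}=\frac{M_j-m_j}{M'_j-m'_j}\,r_{ij}+\frac{M'_j-M_j}{M'_j-m'_j},
\]
with the same slope $a_j$ and the cost form of $b_j$.

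Positivity of $a_j$ then follows at once from the two nonconstancy hypotheses, since numerator and denominator are both strictly positive. The coefficients $a_j,b_j$ depend only on the four bounds and the criterion direction, not on $i$. This shows the transport is criterion-specific and common to every surviving profile, which gives the concluding sentence of the lemma.

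There is no real obstacle; the only point needing care is the sign bookkeeping in the cost case. There I will check that the slope comes out with the same positive sign as in the benefit case: the two reversals, in the baseline and in the modified normalization, cancel. I will also note that the hypotheses are needed only to keep the denominators nonzero, so that both normalizations are defined.
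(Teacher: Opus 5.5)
Your proposal is correct and follows the paper's own proof essentially step for step. Like the paper, you invert the baseline min--max formula separately for benefit and cost criteria, substitute into the modified normalization to read off $a_j$ and $b_j$, and deduce $a_j>0$ from the two nonconstancy hypotheses.
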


\begin{proof}
For a benefit criterion, Equation~\eqref{eq:minmax-normalization} gives
\[
r_{ij}=\frac{x_{ij}-m_j}{M_j-m_j},
\qquad
x_{ij}=m_j+(M_j-m_j)r_{ij}.
\]
Substitution into the normalization based on $\mathcal A'$ yields
\[
\begin{aligned}
r'_{ij}
&=\frac{x_{ij}-m'_j}{M'_j-m'_j}\\
&=\frac{M_j-m_j}{M'_j-m'_j}r_{ij}
+\frac{m_j-m'_j}{M'_j-m'_j}.
\end{aligned}
\]
For a cost criterion,
\[
r_{ij}=\frac{M_j-x_{ij}}{M_j-m_j},
\qquad
x_{ij}=M_j-(M_j-m_j)r_{ij},
\]
and hence
\[
\begin{aligned}
r'_{ij}
&=\frac{M'_j-x_{ij}}{M'_j-m'_j}\\
&=\frac{M_j-m_j}{M'_j-m'_j}r_{ij}
+\frac{M'_j-M_j}{M'_j-m'_j}.
\end{aligned}
\]
Since both criterion ranges are strictly positive, $a_j>0$, which
completes the proof.
\end{proof}

In particular, for any two distinct surviving alternatives
$A_a,A_b\in\mathcal A\cap\mathcal A'$,
\[
r'_{aj}-r'_{bj}
=
a_j\bigl(r_{aj}-r_{bj}\bigr).
\]
Because $a_j>0$, current-set normalization preserves the strict
within-criterion order of every surviving pair. Any final rank reversal
must therefore arise through the interaction of the distinct
criterion-specific transports with the subsequent aggregation
architecture, rather than through an order reversal within an
individual criterion. Also, define their directed score
difference under the alternative set $\mathcal A$ by
\[
\Delta_{ab}^{\mathcal A}
=
P_a^{\mathcal A}-P_b^{\mathcal A}.
\]
Thus, $\Delta_{ab}^{\mathcal A}>0$ means that $A_a$ strictly outranks
$A_b$, whereas $\Delta_{ab}^{\mathcal A}<0$ means that $A_b$ strictly
outranks $A_a$. A strict pairwise rank reversal occurs if and only if
\begin{equation}
\Delta_{ab}^{\mathcal A}
\Delta_{ab}^{\mathcal A'}<0.
\label{eq:strict-rank-reversal}
\end{equation}
Accordingly, a transition to or from a tie, for which either difference
is zero, is recorded separately and is not classified as a strict
pairwise reversal.

To operationalize the preceding criterion, the audit considers two independent 
and complementary classes of set perturbation. First, each of the eight baseline 
alternatives is removed individually from the complete baseline set, and the scores 
and strict pairwise orders of all surviving alternatives are recomputed. Second, 
in a separate experiment and without removing any baseline alternative, the 
artificial alternative
\[
A_9=(95,50,100,35,15)
\]
is added to the set. Given the declared criterion directions, \(A_9\) is strictly 
dominated by every baseline alternative on all five criteria. This construction 
tests whether the introduction of an alternative that cannot outperform any 
original option can nevertheless alter their ordering solely by expanding the 
bounds used in current-set normalization.

Table~\ref{tab:rank-reversal-audit} reports the results of these nine independent 
experiments, comprising eight deletions and one dominated addition. For each 
experiment, it identifies the bounds altered by current-set normalization, the 
number of strict pairwise reversals, and the affected pairs. This organization 
distinguishes a mere change in normalization bounds from the stronger event of an 
actual sign reversal in a pairwise score difference.

\begin{table}[htbp]
\centering
\footnotesize
\renewcommand{\arraystretch}{1.18}
\caption{Set-induced rank-reversal audit.}
\label{tab:rank-reversal-audit}

\begin{tabularx}{\textwidth}
{
>{\raggedright\arraybackslash}p{2.35cm}
>{\raggedright\arraybackslash}X
>{\centering\arraybackslash}p{1.55cm}
>{\raggedright\arraybackslash}p{3.15cm}
}
\toprule
Experiment
&
Changed current-set bounds
&
Strict reversals
&
Affected pairs
\\
\midrule

Delete $A_1$
&
$C_3:[32,94]\to[32,91]$
&
0
&
\textemdash
\\

\addlinespace[2pt]
Delete $A_2$
&
$C_2:[58,90]\to[58,89]$
&
0
&
\textemdash
\\

\addlinespace[2pt]
Delete $A_3$
&
$C_3:[32,94]\to[45,94]$
&
0
&
\textemdash
\\

\addlinespace[2pt]
Delete $A_4$
&
$C_1:[38,88]\to[42,88]$;
$C_4:[42,96]\to[45,96]$
&
2
&
$A_3\leftrightarrow A_5$;
$A_6\leftrightarrow A_8$
\\

\addlinespace[2pt]
Delete $A_5$
&
$C_5:[22,96]\to[56,96]$
&
2
&
$A_1\leftrightarrow A_7$;
$A_1\leftrightarrow A_8$
\\

\addlinespace[2pt]
Delete $A_6$
&
$C_2:[58,90]\to[62,90]$
&
0
&
\textemdash
\\

\addlinespace[2pt]
Delete $A_7$
&
$C_1:[38,88]\to[38,82]$;
$C_4:[42,96]\to[42,88]$
&
2
&
$A_1\leftrightarrow A_3$;
$A_3\leftrightarrow A_5$
\\

\addlinespace[2pt]
Delete $A_8$
&
$C_5:[22,96]\to[22,93]$
&
0
&
\textemdash
\\

\midrule
Add strictly dominated $A_9$
&
All five unfavorable bounds
&
3
&
$A_1\leftrightarrow A_3$;
$A_3\leftrightarrow A_5$;
$A_6\leftrightarrow A_8$
\\

\bottomrule
\end{tabularx}

\vspace{3pt}

\begin{minipage}{0.98\textwidth}
\footnotesize
\textit{Note:}
Upon adding $A_9$, the unfavorable bounds expand as follows:
$C_1:[38,88]\to[38,95]$,
$C_2:[58,90]\to[50,90]$,
$C_3:[32,94]\to[32,100]$,
$C_4:[42,96]\to[35,96]$, and
$C_5:[22,96]\to[15,96]$.
\end{minipage}
\end{table}

Nine strict pairwise reversals occur across four experiments. In this data set, every deletion changes at least one normalization bound, yet five deletions produce no reversal. A change in current-set bounds therefore opens a channel through which reversal may occur, but is not sufficient to produce one by itself.

Deleting $A_5$ provides a direct illustration of this mechanism. Because 
$A_5$ supplies the baseline-set minimum of benefit criterion $C_5$, its 
deletion raises that minimum from $22$ to $56$. Consequently, the normalized 
$C_5$ value of $A_1$ changes from
\[
\frac{56-22}{96-22}=0.459459
\]
to zero. Besides eliminating the direct contribution of $C_5$, this change 
also lowers the self-anchor of $A_1$ and thereby affects its remaining 
criterion contributions. Ultimately, both orders $A_1\succ A_7$ and 
$A_1\succ A_8$ reverse.

The strictly dominated-addition experiment reveals a complementary mechanism.
For every baseline alternative and criterion,
\begin{equation*}
r'_{ij}
=
\beta_j+(1-\beta_j)r_{ij},
\qquad
\boldsymbol{\beta}
=
\left(
\frac{7}{57},
\frac{1}{5},
\frac{3}{34},
\frac{7}{61},
\frac{7}{81}
\right).
\end{equation*}
Every pre-existing normalized value below one increases and moves closer to
one, but the magnitude of this transport differs across criteria. This
criterion-specific contraction toward one changes the normalized profile
shapes and produces three strict pairwise reversals, even though the added
alternative is strictly dominated by every baseline option.

\begin{proposition}[Alternative-set invariance under fixed normalization bounds]
\label{prop:fixed-reference-invariance}
Fix the criterion directions, criterion weights, and normalization bounds independently of the current alternative set. Then the normalized row, self-anchor, kernel values, complete criterion contributions, and final PEJWAK score of every surviving alternative remain unchanged under deletion or addition of other alternatives. Consequently, every strict pairwise order among the surviving alternatives is preserved.
\end{proposition}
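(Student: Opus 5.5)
The plan is to trace the computational chain in Equation~\eqref{eq:scalar-computational-chain} and check that every link depends only on the alternative's own raw row and on quantities fixed independently of the set. Fix exogenous bounds $(m_j,M_j)$ with $M_j>m_j$ for each criterion, chosen so that every admissible raw value lies in $[m_j,M_j]$; this keeps normalized values in $[0,1]$. Replace the current-set extrema in Equation~\eqref{eq:minmax-normalization} by these bounds. Then for a surviving alternative $\mathcal A_i$, each $r_{ij}$ is a function of $x_{ij}$, $m_j$, $M_j$, and the declared direction only. Hence the normalized row $r_i$ is identical under $\mathcal A$ and $\mathcal A'$. Equivalently, in the language of Lemma~\ref{lem:normalization-transport}, the transport has $a_j=1$ and $b_j=0$ for every $j$.

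One point needs care. Under current-set normalization, a criterion could become constant after a deletion, which would trigger removal and weight renormalization. Under fixed bounds, no criterion is removed, because $M_j>m_j$ by construction. The weight vector $w$, and therefore $\varphi(w)$, stays the same set-free object. I expect this to be the only step needing explicit comment: the statement must be read as fixing the retained criterion set together with the bounds, since otherwise $w^\ast$ could change.

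Once $r_i$ and $w$ are unchanged, the rest is immediate.
\begin{itemize}
\item The self-anchor $S_i=S_w(r_i)$ is unchanged.
\item Each kernel value $G_{ij}=K_{w_j}(r_{ij},S_i)$ is unchanged.
\item Each contribution $T_{ij}=\varphi_j(w)G_{ij}$ is unchanged.
\item The score $P_i^{\mathrm{PEJWAK}}=P_w^{\mathrm{PEJWAK}}(r_i)$ is unchanged.
\end{itemize}
This uses the row-level independence noted after Equation~\eqref{eq:vectorized-pejwak}: once $R$, $w$, and $\varphi$ are fixed, a row's score requires nothing from other rows. Well-definedness on the full domain, including zeros, comes from Theorem~\ref{thm:boundary-joint-continuity}, so no set-dependent regularization enters.

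Finally, for any surviving pair, $\Delta^{\mathcal A}_{ab}=\Delta^{\mathcal A'}_{ab}$. Therefore the product in Equation~\eqref{eq:strict-rank-reversal} equals $(\Delta^{\mathcal A}_{ab})^2\ge 0$ and is never negative. So no strict reversal can occur, and every strict pairwise order, as well as every tie, is preserved.
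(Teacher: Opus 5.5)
Your proof is correct and follows essentially the same route as the paper. Under fixed bounds, each $r_{ij}$ depends only on $x_{ij}$ and the fixed reference, so $r_i$, $S_i$, $G_{ij}$, $T_{ij}$, and $P_i$ are unchanged and every surviving pairwise difference is preserved. Your extra remark is a worthwhile clarification that the paper's phrase ``fix the criterion weights'' covers only implicitly: the retained criterion set, and hence $w$ and $\varphi(w)$, must also be held fixed rather than renormalized when a column becomes constant in the current set.
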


\begin{proof}
Under fixed bounds, every normalized coordinate $r_{ij}$ depends only on the raw observation $x_{ij}$ and the fixed criterion reference. Hence $r_i$ is invariant to changes in the alternative set. The quantities $S_i$, $G_{ij}$, $T_{ij}$, and $P_i$ are functions only of $r_i$ and the fixed weight vector, and therefore remain unchanged. All surviving pairwise score differences are consequently preserved.
\end{proof}

Figure~\ref{fig:rank-reversal-transport} summarizes this distinction
visually. Panel~(a) traces the sole dependence pathway in the specified
computational procedure, beginning with a change in the alternative set
and potentially leading to a sign change in $\Delta_{ab}$. Panel~(b)
displays the baseline and post-intervention pairwise gaps for the pairs
that actually undergo reversal.

\begin{figure}[htbp]
\centering

\begin{minipage}[t]{0.45\textwidth}
\centering
\begin{tikzpicture}[font=\scriptsize,>=stealth]

\node[
draw,
rounded corners,
align=center
] (set) at (0,0)
{Alternative-set change\\
$\mathcal A\to\mathcal A'$};

\node[
draw,
rounded corners,
align=center
] (bounds) at (0,-1.3)
{Normalization-bound change\\
$(m,M)\to(m',M')$};

\node[
draw,
rounded corners,
align=center
] (trans) at (0,-2.6)
{$\boldsymbol{r}'_i
=
\boldsymbol{a}\odot\boldsymbol{r}_i+\boldsymbol{b}$};

\node[
draw,
rounded corners,
align=center
] (score) at (0,-3.9)
{$S'_i,\;G'_{ij},\;T'_{ij},\;P'_i$};

\node[
draw,
rounded corners,
align=center
] (rev) at (0,-5.2)
{Possible sign change in\\
$\Delta_{ab}$};

\draw[->] (set) -- (bounds);
\draw[->] (bounds) -- (trans);
\draw[->] (trans) -- (score);
\draw[->] (score) -- (rev);

\end{tikzpicture}\\[-1mm]
\small (a) Dependence pathway induced by current-set normalization
\end{minipage}
\hfill
\begin{minipage}[t]{0.52\textwidth}
\centering
\includegraphics[
width=\linewidth
]{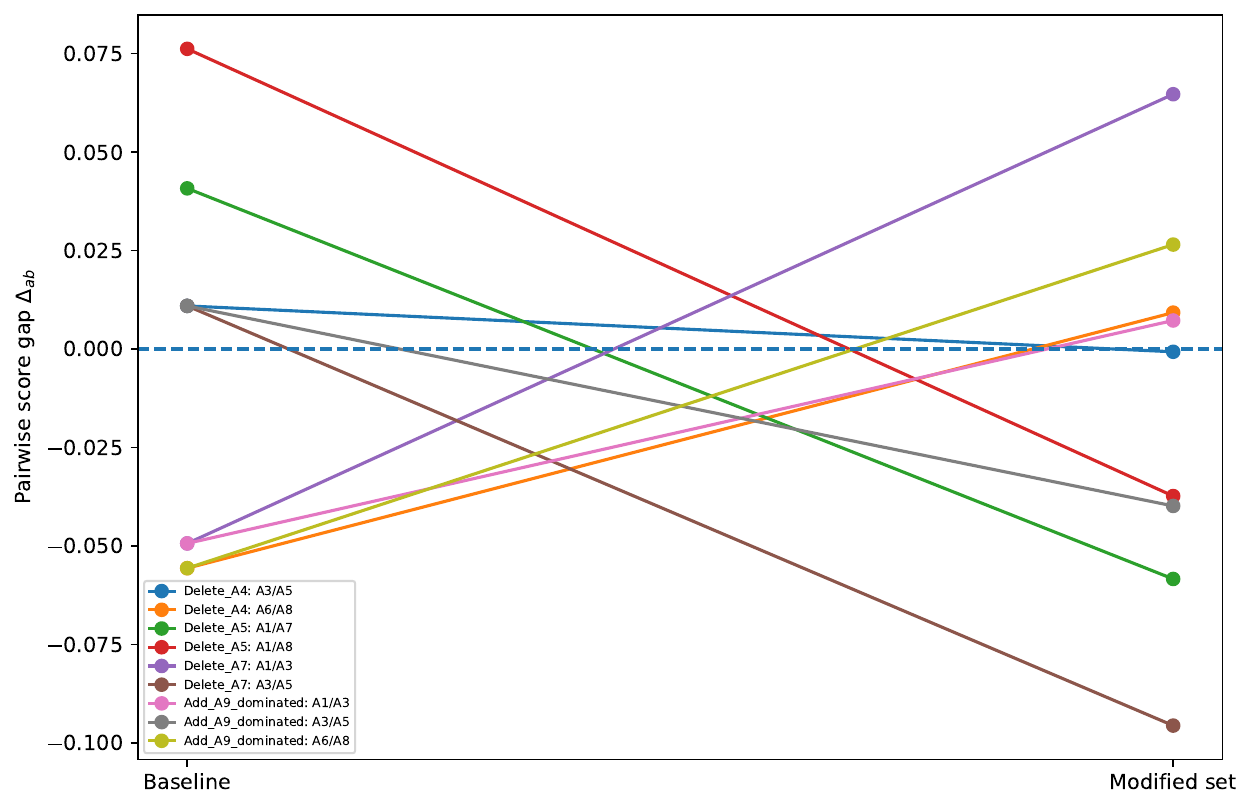}\\[-1mm]
\small (b) Baseline and post-intervention pairwise gaps
\end{minipage}

\caption{Normalization-induced transport and strict pairwise rank reversals.}
\label{fig:rank-reversal-transport}
\end{figure}

The audit does more than report the occurrence of rank reversal; it
localizes its source and provides an exact remedy for the identified
mechanism. Under current-set normalization, criterion-specific affine
transports reshape the surviving profiles and can alter PEJWAK scores.
Under fixed exogenous bounds, every surviving normalized row, self-anchor,
kernel term, criterion contribution, and final score remains unchanged.
Proposition~\ref{prop:fixed-reference-invariance} therefore converts the
diagnosis into a design guarantee: fixed references eliminate this
specific source of set dependence and preserve every strict order among
surviving alternatives.

Within the declared computational procedure, the observed reversals are
not produced by direct cross-alternative dependence inside canonical
PEJWAK. Once normalized rows and criterion importance are fixed, each
alternative is evaluated from its own profile. The dependence enters
through current-set normalization before aggregation. Identifying,
formalizing, and closing this pathway is a diagnostic strength of the
analysis, because it separates the behaviour of the operator from the
behaviour of its preprocessing environment.

\phantomsection
\label{subsec:rank-affinity}

Rank affinity is examined from two genuinely different perspectives.
Directional Weighted Similarity (WS) asks how closely a benchmark
preserves the leading priorities of the declared PEJWAK reference,
assigning exponentially greater influence to its highest positions.
Spearman's coefficient instead measures symmetric association over the
complete ranking. Reporting both coefficients prevents top-order
preservation from being confused with global concordance and reveals
which aspect of the PEJWAK order each benchmark most closely echoes.

In the present comparison, PEJWAK serves as the analytical reference $x$, while
benchmark method $M$ supplies the ranking $y^{(M)}$:
\begin{equation}
WS(x,y^{(M)})
=
1-
\sum_{i=1}^{n}
2^{-x_i}
\frac{|x_i-y_i^{(M)}|}
{\max\{|1-x_i|,|n-x_i|\}}.
\label{eq:ws}
\end{equation}
The coefficient is directional in general:
\[
WS(x,y)\neq WS(y,x).
\]
Accordingly, PEJWAK serves only as the fixed analytical reference in this
comparison, not as a known ground-truth ordering~\cite{Salabun2020}.

For an observed value $s_M$, the exact upper-tail probability under the
strict-ranking permutation null is defined as
\begin{equation}
p_M^{\mathrm{strict}}
=
\frac{1}{n!}
\sum_{\pi\in\mathfrak S_n}
\mathbf{1}
\left\{
WS(x,\pi)\geq s_M
\right\}.
\label{eq:ssd-exact-tail}
\end{equation}

For $n=8$, all
\[
8!=40{,}320
\]
possible permutations are exhaustively enumerated; hence, no Monte Carlo
approximation is used~\cite{Salabun2026}.

Table~\ref{tab:rank-affinity} places the directional WS coefficient, its exact strict-permutation upper tail, and the symmetric Spearman coefficient in a common view. The side-by-side reporting is needed because a method can preserve the top of the PEJWAK reference closely without maximizing concordance over the complete ranking.

\begin{table}[htbp]
\centering
\small
\caption{Rank affinity with PEJWAK as analytical reference.}
\label{tab:rank-affinity}
\begin{tabular}{lrrrr}
\toprule
Method & $WS(\mathrm{PEJWAK},M)$ & Upper-tail count & $p_{\mathrm{strict}}$ & Spearman $\rho_S$\\
\midrule
SAW&0.871615&650&0.016121&0.904762\\
WP&0.874144&611&0.015154&0.627376\\
WASPAS&0.942188&87&0.002158&0.857143\\
$M_2$&0.819494&1621&0.040203&0.714286\\
OWA&0.880469&532&0.013194&0.809524\\
MACONT&0.862128&792&0.019643&0.785714\\
\bottomrule
\end{tabular}
\end{table}

WASPAS achieves the greatest directional top-weighted affinity with
PEJWAK, \(WS=0.942188\), whereas SAW achieves the greatest symmetric
full-ranking association, \(\rho_S=0.904762\). The distinction is
substantively informative. The leading priorities of PEJWAK are most
closely echoed by the declared additive--multiplicative hybrid, while its
complete ordering remains closest to the additive anchor architecture.
Thus, agreement near the top and agreement over the full ranking capture
different structural relationships rather than interchangeable notions
of similarity.

The WP result requires an additional qualification. Its observed ranking
contains a five-way tie represented by midranks, whereas the null
distribution in Equation~\eqref{eq:ssd-exact-tail} contains only strict
rankings without ties. The reported value
\[
p_{\mathrm{WP}}^{\mathrm{strict}}=0.015154
\]
is therefore exact relative to this strict-ranking null distribution.

To assess the sensitivity of the result to the observed tie pattern, a
supplementary audit assigns the rank multiset
\[
(1,2,3,6,6,6,6,6)
\]
to the eight alternatives. Among the
\[
\frac{8!}{5!}=336
\]
distinct assignments, seven produce a WS value at least as large as the
observed WP value. Hence,
\[
p_{\mathrm{WP}}^{\mathrm{tie}}
=
\frac{7}{336}
=
0.020833.
\]
This value is reported solely as a sensitivity check and should not be
interpreted as a formal tie-handling rule of the published nonparametric
significance procedure for the WS coefficient.

Figure~\ref{fig:ws-permutation-tail} illustrates why the WS coefficients
and exact-tail results reported in Table~\ref{tab:rank-affinity} are
top-sensitive. Panel~(a) displays the exponential decay of the
reference-rank weights, while panel~(b) locates the observed coefficient
for each benchmark method on the exact survival function generated by all
$8!$ possible strict rankings.

\begin{figure}[htbp]
\centering

\begin{minipage}[t]{0.38\textwidth}
\centering
\includegraphics[
width=\linewidth
]{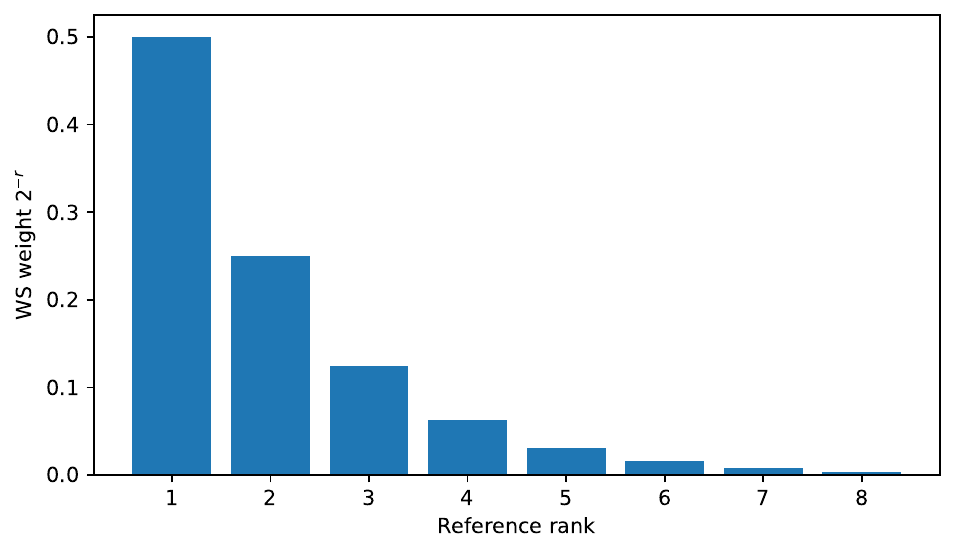}\\[-1mm]
\small (a) Exponential reference-rank weights
\end{minipage}
\hfill
\begin{minipage}[t]{0.59\textwidth}
\centering
\includegraphics[
width=\linewidth
]{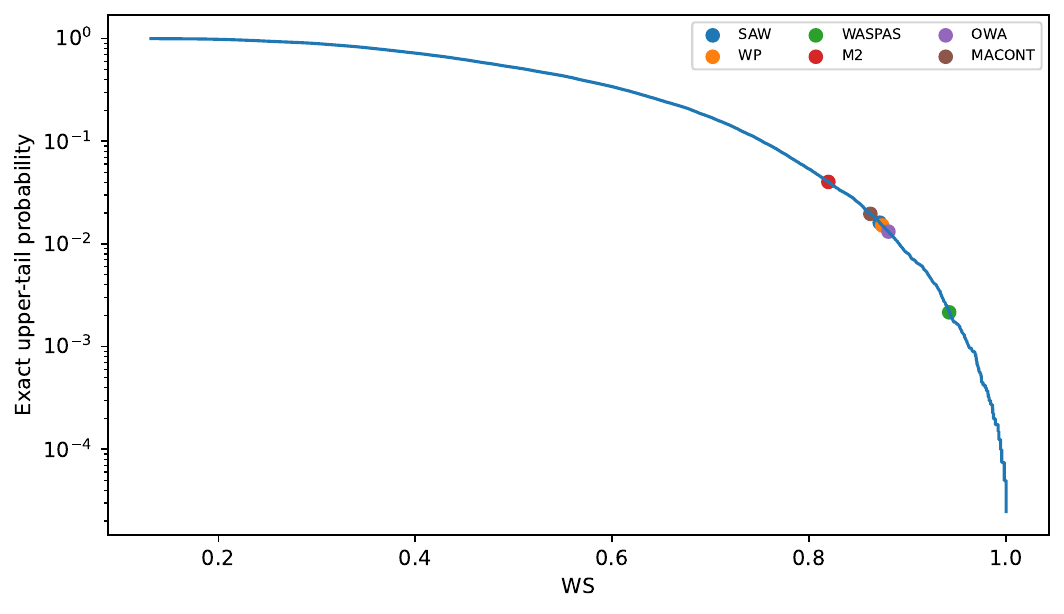}\\[-1mm]
\small (b) Exact survival function under strict-ranking permutations
\end{minipage}

\caption{Directional rank affinity and exact permutation upper tails.}
\label{fig:ws-permutation-tail}
\end{figure}

Exhaustive enumeration of all \(8!\) strict rankings removes Monte Carlo
uncertainty and makes every upper-tail probability exact under the
declared strict-ranking permutation null. These probabilities quantify
how unusual the observed directional affinities are relative to that
reference distribution. Their role is to strengthen the rank-affinity
audit, while external validation of any ranking remains a separate
empirical question.

\phantomsection
\label{subsec:reproducibility-synthesis}

Reproducibility is part of the numerical contribution rather than an
auxiliary reporting feature. One deterministic computational chain
generates the normalized matrix, PEJWAK and benchmark scores, rankings,
criterion contributions, weight-path roots, escort transitions,
set-dependence results, rank-affinity coefficients, and all data-driven
figures. The raw inputs, criterion directions, importance values,
protocol configurations, ranking conventions, and sensitivity paths are
held fixed, while numerical root certificates, manifests, checksums, and
automated verification preserve the trace from every reported quantity
back to its source. The complete numerical argument can therefore be
rerun without manual alteration of intermediate or final outputs.

Table~\ref{tab:evidence-ledger} closes the numerical section by tracing
each research question to its computational evidence, instance-specific
finding, and admissible evidential scope. The final column is part of the
substantive interpretation rather than a generic disclaimer: it prevents
local invariance, rank affinity, and computational reproducibility from
being restated as global robustness, methodological validation, or
external empirical adequacy.

\begin{table}[htbp]
\centering
\scriptsize
\renewcommand{\arraystretch}{1.16}
\setlength{\tabcolsep}{4pt}
\caption{Computational certificates and scientific meaning of the
numerical audit.}
\label{tab:evidence-ledger}

\begin{tabularx}{\textwidth}{p{0.18\textwidth}XXX}
\toprule
Research question
&
Computational certificate
&
Established result
&
Methodological meaning
\\
\midrule

How does PEJWAK handle an isolated boundary zero?
&
Exact decomposition \(P_i=\sum_jT_{ij}\) and the complete zero-set
characterization
&
The zero removes only its own active criterion term whenever another
active criterion sustains a positive anchor.
&
PEJWAK localizes boundary weakness while preserving discrimination among
zero-bearing alternatives.
\\

\addlinespace[2pt]
How stable is the baseline order under importance changes?
&
Continuous \(w_1\) and \(w_3\) paths, certified pairwise roots, and
constant-ranking phase intervals
&
The complete baseline order is preserved throughout the exact intervals
in Equations~\eqref{eq:w1-local-interval} and
\eqref{eq:w3-local-interval}.
&
The architecture supplies local stability certificates and identifies
the precise boundaries at which the order changes.
\\

\addlinespace[2pt]
What does the square-root contribution rule imply?
&
Logarithmic escort path, covariance identity, finite-phase theorem, and
complete root audit over \(q\geq0\)
&
The canonical point lies in the compression regime; twelve transverse
crossings, thirteen rank phases, and a kernel-defined limiting order are
identified.
&
The contribution rule is rank-relevant, structurally consequential, and
fully auditable rather than cosmetic.
\\

\addlinespace[2pt]
How does the alternative set affect surviving orders?
&
Eight deletions, one strictly dominated addition, the affine transport
lemma, and fixed-reference invariance
&
Nine strict pairwise reversals occur in four experiments, while fixed
exogenous bounds preserve every surviving score and order.
&
Set dependence is localized to current-set normalization and can be
removed through fixed reference bounds.
\\

\addlinespace[2pt]
How do benchmark rankings relate to PEJWAK?
&
Directional WS, exhaustive permutation tails, the WP tie-pattern audit,
and Spearman's coefficient
&
WASPAS has the greatest top-weighted affinity, whereas SAW has the
greatest symmetric full-ranking association.
&
Top-order preservation and whole-ranking concordance reveal distinct
architectural relationships with PEJWAK.
\\

\addlinespace[2pt]
Are the numerical results independently reproducible?
&
One reference implementation, scientific and environment manifests,
root certificates, checksums, and automated verification
&
Every numerical table and data-driven figure is regenerated from the raw
inputs and declared settings.
&
The complete numerical argument is traceable and independently
auditable.
\\

\bottomrule
\end{tabularx}
\end{table}

Across these complementary analyses, PEJWAK exhibits identifiable and
reproducible computational signatures: localized boundary effects, exact
stability intervals and transition thresholds, finite contribution-rule
phases, and a clearly isolated normalization pathway for set dependence.
Directional and symmetric affinity measures further separate preservation
of leading priorities from agreement over the complete order. The audit
therefore explains how the scores are formed, certifies where the ranking
is stable, and traces every numerical conclusion to declared inputs and
design rules.

\section{Discussion}
\label{sec:discussion}

PEJWAK is best understood as a distinct aggregation regime rather than a
variation of a weighted mean. Its canonical realization joins three
operations that remain mathematically distinguishable:
\[
S_i=\sum_{j=1}^{n}w_jr_{ij},
\qquad
G_{ij}=K_{w_j}(r_{ij},S_i),
\qquad
P_i=\sum_{j=1}^{n}\varphi_jG_{ij}.
\]
The weighted arithmetic profile is an endogenous reference, not the
terminal score; the kernel interprets each criterion relative to that
reference; and the outer sum converts the anchored terms into complete,
traceable contributions. Context therefore enters before final
aggregation while fixed criterion identity is preserved throughout.
Diagonal calibration and internality keep the result on the normalized
performance scale, and the separation of anchor, kernel, contribution,
and score makes the architecture auditable at every stage.

The exact logarithmic retention law explains the geometry of this
contextualization. For positive score and anchor,
\[
\log\!\left(\frac{K_{\alpha}(x,a)}{a}\right)
=\alpha\log\!\left(\frac{x}{a}\right).
\]
Thus the retention exponent preserves an exact fraction of both upward
and downward logarithmic deviation from the anchor. Because every active
criterion also helps form that anchor, changing one coordinate modifies
the contextual reference and, through it, the anchored terms and marginal
effects of the remaining criteria. The resulting dependence is
profile-mediated and state-dependent. It should not be relabelled as a
fixed synergy, redundancy, or veto effect, because those interpretations
require explicit interaction primitives that PEJWAK does not assume.

Criterion importance consequently operates through three coordinated but
noninterchangeable channels: reference formation, retention of
criterion-specific deviation, and final participation. The square-root
contribution map preserves support and strict importance ordering while
moderating positive ratios according to
\[
\frac{\varphi_j}{\varphi_k}=\sqrt{\frac{w_j}{w_k}}.
\]
This moderation is especially relevant because the same importance
information has already shaped the anchor and the retention exponents.
The escort continuation confirms that the outer rule is structurally
consequential: the canonical point lies in a compression regime, while
progressive concentration produces finitely many certified ranking phases
and a kernel-determined limiting order. The escort exponent remains an
ex post audit coordinate, not an elicited preference parameter.

The theoretical advantages appear directly in the numerical audit. An
active zero eliminates only its own criterion contribution whenever the
remaining active scores sustain a positive self-anchor; unlike a complete
weighted product, it does not erase all information elsewhere in the
profile. Every PEJWAK score is reconstructed exactly as
\(P_i=\sum_jT_{ij}\), so the path from normalized performance through
anchoring and retention to final participation remains visible. The
continuous importance paths provide exact local stability intervals and
transition thresholds, and the contribution-rule audit identifies all
ranking phases over the declared continuation. The set-dependence study
adds an equally important separation: canonical PEJWAK has no direct
cross-alternative dependence once normalized rows are fixed. The observed
deletion and dominated-addition reversals arise through current-set
normalization, which changes the surviving profiles before aggregation;
fixed exogenous bounds close this specific pathway and preserve every
surviving score and strict order.

These results locate PEJWAK in a distinct methodological space. Direct
additive rules provide transparent exchange but no alternative-specific
context before summation. Global multiplicative rules create profile-wide
geometric coupling and propagate an active zero. Ordered operators move
influence to rank positions, post hoc hybrids combine completed indices,
and Choquet-type integrals represent explicit coalitional interaction
through capacities. PEJWAK addresses a different design problem: a fixed
criterion should retain its identity while being interpreted within the
profile of the same alternative. When that semantics is substantively
appropriate, explicit coalition effects are not the target, and complete
decomposition matters, the profile-echoing architecture provides a
capability not supplied in equivalent form by those families. Its
contribution is therefore an aggregation relation, not a promise to
produce a different ranking in every instance.

\section{Scope and Cross-Domain Applicability}
\label{sec:limitations}

The present study establishes the real-valued mathematical core of
PEJWAK and audits its canonical realization in a controlled
multi-criteria decision analysis experiment. The proved results concern
the architecture itself, including the profile-derived anchor,
criterion-level geometric coupling, logarithmic retention, complete score
decomposition, and coherence on the full score-weight domain. The
numerical conclusions are exact for the declared data, normalization
rules, importance paths, and benchmark protocols. They establish how the
mechanism operates; predictive performance and domain-specific
effectiveness require independent empirical evidence.

Multi-criteria decision analysis is the first application semantics of
PEJWAK, not its mathematical boundary. Formally, the architecture acts on
a bounded labelled profile whose components retain fixed identities,
construct an endogenous reference, and are coupled with that reference
before final aggregation. Transfer to another domain is coherent when the
components admit a defensible common scale, their identities remain
meaningful, the within-instance reference has a substantive
interpretation, and the three roles of importance can still be
separated. Under those conditions, an alternative may become an observed
object, event, time window, spatial unit, or system state, and a criterion
may become an identifiable sensor, model, channel, feature, or indicator.

Information and model fusion provide the most immediate extension.
Aggregation operators have long combined sensors, detectors,
classifiers, and neural networks
\cite{Auephanwiriyakul2002,ChoKim1995}. In a PEJWAK formulation, each
source would remain identifiable, the self-anchor would summarize the
outputs for the same observed instance, and the final fused result would
be decomposable into source-level contributions. This architecture is
particularly relevant when within-instance context and source
traceability are both important, although calibration, correlated error,
missing observations, and source reliability must be evaluated within
the application.

Signal and image processing offer a second natural setting. Weighted
aggregation has been used for biomedical signals with unequal noise
levels, and fuzzy-integral fusion has been applied in computer vision
\cite{Bataillou1995,TahaniKeller1990}. A PEJWAK profile could be formed
from signal channels, repeated measurements, spectral components,
feature detectors, or outputs within a spatial or temporal window. The
self-anchor would provide a local reference and the kernel would regulate
how much channel-specific deviation remains in the fused representation.
Any such use would require task-specific assessment of noise robustness,
spatial or temporal consistency, and computational scalability.

Economics, composite measurement, security, and reliability are also
structurally compatible domains. Nonadditive aggregation has a
well-established role in expected utility under nonadditive probability,
and Choquet-based classifier fusion has been used in financial-distress
early warning \cite{Schmeidler1989,Cao2012}. PEJWAK may therefore be
examined for economic, social, sustainability, resilience, or
organizational indices when indicator meaning is partly profile-dependent.
Likewise, aggregation operators have been incorporated into attack-tree
and reliability models \cite{Yager2006}; a profile-echoing formulation
could retain the identities of vulnerability indicators, alarms, or
component-health measures while producing an auditable system-level
assessment. In each case, the aggregate must satisfy the interpretive,
axiomatic, and empirical requirements of the target field.

These pathways establish the structural portability of profile echoing;
they are not applications validated by the present numerical study. The
transfer claim is deliberately precise: PEJWAK is a
profile-conditioned aggregation architecture rather than an MCDA-only
scoring formula. Its relevance extends to settings in which bounded
labelled components must be combined while component identity,
within-instance context, and contribution-level traceability remain
substantively meaningful. The normalization scale, self-anchor,
importance semantics, and final output must nevertheless be justified and
tested within each domain.

\section{Conclusion}
\label{sec:conclusion}

This study introduced PEJWAK as a profile-echoing self-anchored
aggregation architecture. Its central design choice precedes the final
reduction of a multidimensional profile to a scalar score: each fixed
criterion is first interpreted relative to an endogenous reference
extracted from the same alternative. The criterion therefore enters the
aggregate neither as an isolated term in a completed sum nor as an
indistinguishable factor in one global product. It preserves its identity
while its expressed contribution is conditioned by the profile to which
it belongs.

The formal development establishes a general family that separates the
anchor function, anchoring kernel, retention exponents, contribution
coefficients, and outer aggregation rule. The canonical member links
these layers through one criterion-importance vector and assigns that
information three distinct roles: self-anchor formation,
criterion-specific retention, and final participation. The resulting
operator is diagonally calibrated, internal, monotone, and jointly
continuous on the complete score-weight domain. Exact logarithmic
retention, endogenous cross-effects, nonseparability, failure of
bisymmetry in the equal-weight binary restriction, and state-dependent
marginal substitution demonstrate a genuine profile-mediated geometry
rather than a disguised additive or classical quasi-arithmetic form.

The numerical audit translates this structure into verifiable
computational signatures. Boundary zeros are localized to their own
active contributions; every score is completely decomposable; continuous
importance paths yield certified stability intervals and transition
thresholds; and the escort continuation produces finitely many
constant-ranking phases and a kernel-defined limiting order. The
set-dependence analysis identifies current-set normalization as the
specific pathway through which changes in the alternative set can alter
surviving profiles and strict pairwise orders, while fixed exogenous
bounds close that pathway. Exact permutation enumeration, transition
certificates, manifests, checksums, and deterministic reruns preserve the
trace from reported results to declared inputs.

PEJWAK consequently occupies an important space between direct weighted
aggregation and parameter-intensive nonadditive interaction models. It
introduces alternative-specific context without subset capacities,
preserves fixed criterion identity, separates retention from final
participation, and supports complete contribution-level auditing.
Multi-criteria decision analysis provides its first application
semantics, but not its mathematical limit. Wherever bounded labelled
components must be combined and their meaning depends partly on the
profile of the same observation, profile echoing defines a distinct
aggregation principle: a component enters the final evaluation not merely
as a value, but as a value interpreted within its own profile.


\section*{Acknowledgments}
The author sincerely thanks the anonymous reviewers for their careful,
constructive, and sustained evaluations across successive rounds of
review. Their comments materially improved the clarity, rigor,
organization, and presentation of the manuscript.

\section*{Funding}
This research did not receive any specific grant from funding agencies
in the public, commercial, or not-for-profit sectors.

\section*{Data Availability}
The raw data are reported in the manuscript.

\section*{Conflicts of Interest}
The author declares no conflict of interest.

\section*{Author Contributions}
\textbf{S. A. Edalatpanah:}
Conceptualization; Methodology; Formal analysis;
Investigation; Software; Validation; Visualization; Writing--original
draft; Writing--review and editing.
\section*{Declaration on the Use of Generative Artificial Intelligence}
During the development and preparation of this article, the author used
generative artificial-intelligence tools, including assistants based on
large language models, in a limited capacity and under his full
supervision. These tools were employed to improve the clarity,
readability, linguistic accuracy, internal coherence, and presentation
of parts of the manuscript; to assist in the editorial preparation of
the text in \LaTeX; and to support formal-consistency checks and
computational cross-checking. The conceptual architecture of PEJWAK, the
mathematical definitions, analytical results, proofs, experimental
design, numerical interpretations, and scientific conclusions are the
author's own. No AI-generated material was incorporated into the
manuscript without the author's careful review, verification, and
revision. Full responsibility for the originality, accuracy,
reproducibility, scientific integrity, and final content of the article
rests solely with the author.

\end{document}